 \theoremstyle{plain}
 \newtheorem{thm}{Theorem}[section]
 \newtheorem{cor}[thm]{Corollary}
 \newtheorem{lem}[thm]{Lemma}
 \newtheorem{prop}[thm]{Proposition}
\theoremstyle{definition}
 \newtheorem{defn}[thm]{Definition}
\theoremstyle{remark}
 \newtheorem{rem}[thm]{Remark}
 \newtheorem{conv}[thm]{Convention}
 \newtheorem{exam}[thm]{Example}
 \numberwithin{equation}{section}
\newtheorem*{incla}{Claim}
\newtheorem*{sta}{\textbf{\textup{Claim}} $(\star)$}
\DeclareMathOperator{\VF}{VF} \DeclareMathOperator{\ACVF}{ACVF}
\DeclareMathOperator{\RV}{RV} 
\DeclareMathOperator{\MM}{\mathcal{M}}
\DeclareMathOperator{\LL}{\mathcal{L}}
\DeclareMathOperator{\OO}{\mathcal{O}}
\DeclareMathOperator{\UU}{\mathcal{U}}
 \DeclareMathOperator{\ran}{ran}
 \DeclareMathOperator{\id}{id}
 \DeclareMathOperator{\Th}{Th}
 \DeclareMathOperator{\lh}{lh}
 \DeclareMathOperator{\cha}{char}
 \DeclareMathOperator{\ac}{\overline{ac}}
 \DeclareMathOperator{\acl}{acl}
 \DeclareMathOperator{\pr}{pr}
\DeclareMathOperator{\K}{\overline{K}}
\DeclareMathOperator{\rv}{rv}
\DeclareMathOperator{\vv}{val}
\DeclareMathOperator{\fib}{fib}
\DeclareMathOperator{\rad}{rad}
\DeclareMathOperator{\vcr}{vcr}
\DeclareMathOperator{\vrv}{vrv}
\DeclareMathOperator{\RVH}{RVH}
\DeclareMathOperator{\can}{\mathbf{c}}
\DeclareMathOperator{\sn}{sn}
\DeclareMathOperator{\dvn}{div}
\def\XXint#1#2#3{{\setbox0=\hbox{$#1{#2#3}{\int}$}
\vcenter{\hbox{$#2#3$}}\kern-.5\wd0}}
\newcommand{\Z}{\mathds{Z}}
\newcommand{\N}{\mathds{N}}
\newcommand{\p}{$p$\nobreakdash}
\newcommand{\bmin}{$b$\nobreakdash}
\newcommand{\cmin}{$C$\nobreakdash}
\newcommand{\gB}{\mathfrak{B}}
\newcommand{\gC}{\mathfrak{C}}
\newcommand{\ga}{\mathfrak{a}}
\newcommand{\gb}{\mathfrak{b}}
\newcommand{\gc}{\mathfrak{c}}
\newcommand{\gd}{\mathfrak{d}}
\newcommand{\gh}{\mathfrak{h}}
\newcommand{\go}{\mathfrak{o}}
\newcommand{\gp}{\mathfrak{p}}
\newcommand{\gs}{\mathfrak{s}}
\newcommand{\0}{\emptyset}
 \newcommand{\abs}[1]{\left\vert#1\right\vert}
 \newcommand{\set}[1]{\left\{#1\right\}}
 \newcommand{\seq}[1]{\left<#1\right>}
 \newcommand{\norm}[1]{\left\Vert#1\right\Vert}
 \newcommand{\lan}[1]{\mathcal{L}_{\textup{#1}}}
\newcommand{\limplies}{\rightarrow}
\newcommand{\fa}[1]{\forall #1 \;} 
\newcommand{\rest}{\upharpoonright}
\newcommand{\lbar}{\vec}
\newcommand{\fun}{\longrightarrow}
\newcommand{\efun}{\longmapsto}
\newcommand{\sub}{\subseteq}
\newcommand{\mi}{\smallsetminus}
\newcommand{\la}{\langle}
\newcommand{\ra}{\rangle}
\title[QE and minimality in ACVF]{Quantifier elimination and minimality conditions in algebraically closed valued fields}
\author[Y. Yin]{Yimu Yin}
\address{Department of Mathematics, University of Pittsburgh, 301 Thackeray Hall, Pittsburgh, PA 15260}
\email{yimuyin@pitt.edu}
\begin{document}

\begin{abstract}
A Basarab-Kuhlmann style language $\lan{RV}$ is introduced in the Hrushovski-Kazhdan integration theory~\cite{hrushovski:kazhdan:integration:vf}. The theory $\ACVF$ of algebraically closed valued fields formulated in this language admits quantifier elimination, which is not proved in~\cite{hrushovski:kazhdan:integration:vf} and the reader is referred to a result about a much more complicated language. In this paper, using well-known facts in the theory of valued fields, we give a straightforward proof. We also show that two expansions $\ACVF^{\dag}$ and $\ACVF^{\ddag}$ of $\ACVF$, one with a section of the entire $\RV$-sort and the other with a section of the residue field, admit quantifier elimination. Thereafter we show that, in terms of certain minimality conditions, the three theories are distinct geometrically.
\end{abstract}

\maketitle

\section{Introduction}

In this paper we study quantifier elimination (QE) for algebraically closed valued fields in a particular language $\lan{RV}$ and some of its geometrical consequences. The first QE result for algebraically closed valued fields is due to Robinson~\cite{Robinson1956}, where he used a one-sorted language $\lan{val}$ that uses a linear divisibility relation $\dvn$ to express the valuation. Later, Weispfenning~\cite{weispfenning:1983} gave a primitive recursive procedure of QE in the natural two-sorted language $\lan{v}$ for valued fields (one sort for the field and the other sort for the value group):

\begin{thm}[Weispfenning]\label{weis:qe}
The theory of algebraically closed valued fields as formulated in $\lan{v}$ admits QE.
\end{thm}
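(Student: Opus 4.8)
The natural route is the standard embedding (back-and-forth) criterion: $\ACVF$ eliminates quantifiers in $\lan{v}$ as soon as the following holds --- whenever $\mdl M=(K_1,\Gamma_1,\vv_1)$ and $\mdl N=(K_2,\Gamma_2,\vv_2)$ are models with $\mdl N$ sufficiently saturated (e.g.\ $|M|^+$-saturated), $A\sub\mdl M$ is a substructure, and $e\colon A\to\mdl N$ is an $\lan{v}$-embedding, then $e$ extends to an embedding $\mdl M\to\mdl N$. So fix $\mdl M,\mdl N,e$, and by Zorn's lemma take $A$ maximal with $e$ defined on it; the goal is $A=\mdl M$. A substructure of a model of $\ACVF$ is a subring $R$ of the field sort together with an ordered subgroup $\Gamma_A\sub\Gamma_1$ containing $\vv_1(R\mi\set 0)$, carrying the restricted valuation. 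Since a ring embedding into a field extends uniquely and valuation-compatibly to the fraction field, we may take $R=F$ to be a subfield. Assume now $A\neq\mdl M$. If $\Gamma_A\neq\Gamma_1$, pick $\gamma\in\Gamma_1\mi\Gamma_A$: its quantifier-free type over $A$ concerns only the ordered abelian group $\Gamma_A+\Z\gamma$, and since $\Gamma_1$ and $\Gamma_2$ are divisible ordered abelian groups this type is finitely satisfiable in $\Gamma_2$, hence realized there by saturation; sending $\gamma$ to a realization extends $e$ to $(F,\Gamma_A+\Z\gamma,\vv_1\!\rest\!F)$, contradicting maximality. So $\Gamma_A=\Gamma_1$ while $F\subsetneq K_1$, and we must contradict maximality by enlarging $F$.

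The field step uses classical valuation theory. Since a model of $\ACVF$ has divisible value group and algebraically closed residue field, the standard fact that any two extensions of a valuation to an algebraic closure are conjugate over the base lets us extend $e$ to the relative algebraic closure of $F$ in $K_1$; thus we may assume $F$ is algebraically closed, so $\Gamma_F:=\vv_1(F^\times)$ is divisible and $k_F$ is algebraically closed. Fix $a\in K_1\mi F$; it is transcendental over $F$, and $S:=\set{\vv_1(a-c)\st c\in F}\sub\Gamma_1$ either has no maximum or attains one. \emph{If $S$ has no maximum}, then $a$ is a pseudo-limit of a pseudo-Cauchy sequence $\seq{c_\rho}$ from $F$ with no pseudo-limit in $F$, necessarily of transcendental type because $F$ is algebraically closed; the requirement that $x$ be a pseudo-limit of $\seq{c_\rho}$ axiomatizes the quantifier-free type of $a$ over $A$, its finite fragments are satisfied in $\mdl N$ by sufficiently late terms of $\seq{e(c_\rho)}$, so it is realized by some $b\in K_2$, such a $b$ is automatically transcendental over $e(F)$, and by Kaplansky's uniqueness theorem for immediate transcendental extensions $a\mapsto b$ gives a valued-field isomorphism $F(a)\cong e(F)(b)$ extending $e$. \emph{If $S$ attains its maximum} at $\vv_1(a-c_0)=\delta$, then replacing $a$ by $a-c_0$ and, when $\delta\in\Gamma_F$, dividing by an element of value $\delta$ (using that $k_F$ is algebraically closed to exclude the residue lying in $k_F$, which would contradict maximality of $\delta$), we reduce to one of two situations: either $\vv_1(a)\notin\Gamma_F$, where the valuation on $F(a)$ is forced to be the monomial valuation with $\vv_1(a)$ free over $\Gamma_F$, and since $\vv_1(a)\in\Gamma_A$ and $\vv_2$ is onto any $b\in K_2$ with $\vv_2(b)=e(\vv_1(a))$ is transcendental over $e(F)$ with $a\mapsto b$ valued-field over $e$; or $\vv_1(a)=0$ with $\overline a$ transcendental over $k_F$, where the valuation on $F(a)$ is the Gauss valuation, the quantifier-free type asserting $\vv(x)=0$ and that the residue of $x$ is transcendental over $k_{e(F)}$ is finitely satisfiable in $\mdl N$, hence realized by some $b$, and again $a\mapsto b$ is valued-field over $e$. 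In all cases $e$ extends strictly, contradicting maximality; therefore $A=\mdl M$, and quantifier elimination follows.

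The main obstacle is the no-maximum case: it rests on Kaplansky's existence and uniqueness theory for pseudo-limits of a pseudo-Cauchy sequence of transcendental type --- existence, to produce the pseudo-limit $b$ inside a saturated $\mdl N$, and uniqueness, to know the resulting field isomorphism $F(a)\cong e(F)(b)$ preserves the valuation. The conjugacy of extensions of a valuation to an algebraic closure, used to reduce to algebraically closed $F$, is the other substantive input; once both are granted, the value-group and residue-field bookkeeping is routine, using quantifier elimination for divisible ordered abelian groups, surjectivity of $\vv$, and algebraic closedness of the residue field. (Alternatively, the theorem can be recovered from Robinson's one-sorted quantifier elimination in $\lan{val}$, but the two-sorted argument above is more transparent.)
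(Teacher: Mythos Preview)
The paper does not give its own proof of this theorem: it is stated in the introduction as a result of Weispfenning and thereafter invoked as a black box (Remark~\ref{L:v:and:L:RV}, Proposition~\ref{lrv:mono:lift}). Your argument is a correct, standard proof via the embedding test---reduce to an algebraically closed base field, then treat a transcendental element according to whether it is a pseudo-limit with no approximation from $F$ (Kaplansky's theorem on immediate transcendental extensions), value-transcendental, or residue-transcendental. The step extending $\Gamma_A$ to all of $\Gamma_1$ before enlarging the field sort is fine in the two-sorted setting, since the language allows the $\Gamma$-sort to outrun $\vv(F^\times)$.

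It is worth noting that the paper's proof of QE for $\ACVF$ in the richer language $\lan{RV}$ (Section~\ref{section:qe}) has a closely parallel shape: henselize, then extend through residue-algebraic (Lemma~\ref{lift:all:root}), value-group (Lemma~\ref{lift:divisible:hull}), residue-transcendental (Lemma~\ref{lift:transcendental}), and value-transcendental (Lemma~\ref{lift:value:full}) steps. The difference is that instead of handling the immediate (pseudo-limit) case directly via Kaplansky, the paper arranges for the substructure to become $\VF$-generated and then appeals to Theorem~\ref{weis:qe} itself. So your proof supplies exactly the ingredient the paper takes as given.
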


Another important QE result is due to Delon~\cite{F:delon:1981}, where she used a natural three-sorted language (a third sort for the residue field).

In the Hrushovski-Kazhdan integration theory~\cite{hrushovski:kazhdan:integration:vf} a Basarab-Kuhlmann style two-sorted language $\lan{RV}$ for algebraically closed valued fields is introduced, whose second sort $\RV$ is meant for the residue multiplicative structure. The corresponding theory is called $\ACVF(p,q)$, where $(p, q)$ indicates the characteristics of the field and the residue field. A basic motivation for the introduction of such a sort $\RV$ is to develop an integration theory for valued fields that are not equipped with an angular component map $\ac$. The map $\ac$ is a crucial ingredient in the Cluckers-Loeser integration theory~\cite{cluckers:loeser:constructible:motivic:functions}. This theory may be applied in general to the field of formal Laurent series over a field of characteristic $0$, but it heavily relies on the Cell Decomposition Theorem of
Denef-Pas~\cite{Denef:1986,Pa89}, which is only achieved for
valued fields of characteristic $0$ that are equipped with $\ac$. However, an angular component map is not
guaranteed to exist for just any valued field, for example,
algebraically closed valued fields. The Hrushovski-Kazhdan integration theory does not require the presence of $\ac$ and hence is of great foundational importance for the development of motivic integration.

To be more precise, it is not QE that is needed in~\cite{hrushovski:kazhdan:integration:vf}, but rather an important geometrical consequence of it, namely $C$-minimality (see~\cite{macpherson:steinhorn:variants:ominimal, haskell:macpherson:1994}). Two major $C$-minimal theories that are covered in~\cite{hrushovski:kazhdan:integration:vf} are $\ACVF(0,0)$ and its rigid analytic expansions. QE is still a fundamental tool in studying the models of these two theories. For them, the Hrushovski-Kazhdan integration theory may be simplified  through techniques that combine QE and $C$-minimality. For $\ACVF(0,0)$ this has been done in~\cite{yin:hk:part:1}.

In this paper we shall give a proof of QE for
$\ACVF$ of any characteristic. Note that this is not directly
proved in~\cite{hrushovski:kazhdan:integration:vf} and the reader
is referred to~\cite{haskell:hrushovski:macpherson:2006}. The
theme of the latter is elimination of imaginaries and the relevant
results use a much more complicated language than $\lan{RV}$,
which do not seem to imply QE for $\ACVF$ in a
straightforward fashion. Our proof, except some fundamental facts
in the theory of valued fields, is elementary and self-contained.

We can expand $\ACVF$ with a section of either the entire $\RV$-sort or just the residue field. The resulting theories are called $\ACVF^{\dag}$ and $\ACVF^{\ddag}$. Similar languages have been considered in~\cite{hru:kazh:2009}. In this paper we shall also prove that both $\ACVF^{\dag}$ and $\ACVF^{\ddag}$ admit QE.

On the other hand, geometrically, or more precisely, in terms of minimality conditions, the three theories can be distinguished from one another. First of all, QE implies that $\ACVF$ is $C$-minimal. The theories $\ACVF^{\dag}$ and $\ACVF^{\ddag}$ are obviously not \cmin-minimal. However, all three theories $\ACVF(0,0)$, $\ACVF^{\dag}(0,0)$, and $\ACVF^{\ddag}(0,0)$ are $b$-minimal in the sense of~\cite{cluckers:loeser:bminimality}. Finally we shall introduce a natural local version of \cmin-minimality, called local \cmin-minimality, and show that $\ACVF^{\ddag}$ satisfies it but $\ACVF^{\dag}$ does not.

We note that $b$-minimality of $\ACVF(0,0)$ is covered by~\cite[Theorem~7.2.6]{cluckers:loeser:bminimality}. Our proof presents in detail some essential aspects of the deep analysis of definable sets in $\ACVF(0,0)$ developed in the Hrushovski-Kazhdan integration theory~\cite{hrushovski:kazhdan:integration:vf}. This shall be continued in a sequel.

\section{Preliminaries}

Let us first introduce the Basarab-Kuhlmann style language
$\lan{RV}$ for valued fields. This style
first appeared in \cite{basarab:1991, basarab:kuhlmann:1992} and has been further investigated in
\cite{kuhlmann:1994, scanlon:thomas:QE:relative:Frobenius}. Its main feature is
the use of a countable collection of residue multiplicative
structures, which are reduced to just one for valued fields of
pure characteristic $0$.

\begin{defn}
The language $\lan{RV}$ has the following sorts and symbols:
\begin{enumerate}
 \item a $\VF$-sort, which uses the language of rings
 $\lan{R} = \set{0, 1, +, -, \times}$;
 \item an $\RV$-sort, which uses
  \begin{enumerate}
    \item the group language $\set{1, \times}$,
    \item two constant symbols $0$ and $\infty$,
    \item a unary predicate $\K^{\times}$,
    \item a binary function $+ : \K^2 \fun \K$ and a
    unary function $-: \K \fun \K$, where $\K =
    \K^{\times} \cup \set{0}$,
    \item a binary relation $\leq$;
    \end{enumerate}
  \item a function symbol $\rv$ from the $\VF$-sort into the $\RV$-sort.
\end{enumerate}
\end{defn}
Technically speaking, the constant $0$ and the functions $+$, $-$
in the $\RV$-sort should all be relations. Note that, for notational convenience, we do not use different symbols for $0$ and $1$, since which ones are
being referred to should always be clear in context. The two sorts without the zero elements are respectively denoted as $\VF^{\times}$ and $\RV$; $\RV \mi \set{\infty}$ is denoted as $\RV^{\times}$; and $\RV \cup \set{0}$ is denoted as $\RV_0$.

Let $M$ be an $\lan{RV}$-structure and $A$ a subset of $M$. The substructure generated by $A$ in $M$ is denoted as $\la A \ra$. A substructure $N \sub M$ is \emph{$\VF$-generated} if there is a subset $A \sub \VF(N)$ such that $N  = \la A \ra$.

Valued fields are naturally $\lan{RV}$-structures. Let $(K, \vv)$ be a
valued field and $\OO$, $\MM$, $\K$, $\Gamma$ the corresponding valuation
ring, maximal ideal, residue field, and value group. The sort $\RV$ is interpreted as $\RV(K) = K^{\times} / (1 + \MM)$ and the function $\rv$ is interpreted as the canonical quotient map $K^{\times} \fun \RV(K)$. For each $a \in K$, $\vv$ is constant on the subset $a + a\MM$ and hence there is a naturally induced map
$\vrv$ from $\RV(K)$ onto the value group $\Gamma$. The relation $\leq$ is then interpreted as the ordering given by $\vrv$ and the ordering of $\Gamma$. The situation
is illustrated in the following commutative diagram
\begin{equation*}
\bfig
 \square(0,0)/^{ (}->`->>`->>`^{ (}->/<600, 400>[\OO \mi \MM`K^{\times}`\K^{\times}`
\RV(K);`\text{quotient}`\rv`]
 \morphism(600,0)/->>/<600,0>[\RV(K)`\Gamma;\vrv]
 \morphism(600,400)/->>/<600,-400>[K^{\times}`\Gamma;\vv]
\efig
\end{equation*}
where the bottom sequence is exact. We see that $\K$ and $\Gamma$ are naturally wrapped together in this one sort $\RV(K)$. Note that the existence of an
angular component $\ac : K^{\times} \fun \K^{\times}$ is
equivalent to the existence of a group homomorphism from $\RV(K)$
onto $\K^{\times}$ in the diagram.

\begin{defn}
\emph{The theory $\ACVF$ of algebraically closed valued fields in $\lan{RV}$} states the following:
\begin{enumerate}
 \item $(\VF, 0, 1, + , -, \times)$ is an algebraically close field;

 \item $(\RV^{\times}, 1, \times)$ is a divisible abelian
 group, where multiplication $\times$ is augmented by $t
 \times 0 = 0$ for all $t \in \K$ and $t \times \infty =
 \infty$ for all $t \in \RV_0$;

 \item $(\K, 0, 1, +, -, \times)$ is an algebraically closed field;

 \item the relation $\leq$ is a preordering on $\RV$ with
 $\infty$ the top element and $\K^{\times}$ the equivalence
class of 1;

 \item the quotient $\RV / \K^{\times}$, denoted as
 $\Gamma \cup \set{\infty}$, is a divisible ordered abelian
group with a top element, where the ordering and the group
operation are induced by $\leq$ and $\times$, respectively,
and the quotient map $\RV \fun \Gamma \cup \set{\infty}$ is
denoted as $\vrv$;

 \item the function $\rv : \VF^{\times} \fun \RV^{\times}$
 is a surjective group homomorphism augmented by $\rv(0) =
\infty$ such that the composite function
\[
\vv = \vrv \circ \rv : \VF \fun \Gamma \cup \set{\infty}
\]
is a valuation with the valuation ring $\OO =
\rv^{-1}(\RV^{\geq 1})$ and its maximal ideal $\MM =
\rv^{-1}(\RV^{> 1})$, where
\begin{gather*}
\RV^{\geq 1} = \set{x \in \RV: 1 \leq x},\\
\RV^{> 1} = \set{x \in \RV: 1 < x}.
\end{gather*}
\end{enumerate}
\end{defn}

The set $\OO \mi \MM$ of units in the valuation ring is sometimes
denoted as $\UU$. In any model of $\ACVF$, the function $\rv \rest
\VF^{\times}$ may be identified with the quotient map
$\VF^{\times} \fun \VF^{\times} / (1 + \MM)$. Hence an $\RV$-sort
element $t$ may be understood as a coset of $(1 + \MM)$ and we may write $a \in t$ to mean $a \in \rv^{-1}(t)$.

Although we do not include the multiplicative inverse function in
the $\VF$-sort and the $\RV$-sort, we always assume that, without
loss of generality, $\VF(S)$ is a field and $\RV^{\times}(S)$ is a
group for a substructure $S$ of a model of $\ACVF$.

Besides analytic expansions, there are other expansions of $\lan{RV}$ that are of some interest. If we add an angular component map then the resulting language is in effect a three-sorted Denef-Pas language. As a relatively easy consequence of the deep analysis of definable sets of $\ACVF(0,0)$ in~\cite{hrushovski:kazhdan:integration:vf}, a new proof of the QE result in Denef-Pas language~\cite{Pa89} may be obtained through specialization. This proof will be presented in a sequel.

In certain developments of motivic integration theory it is desirable to prescribe $\VF$-sort representatives for elements in $\RV$, or at least for elements in the residue field (for example see~\cite{hru:kazh:2009}).

\begin{defn}
A function $\sn : \RV \fun \VF$ is a \emph{section} of $\RV$ if
\begin{enumerate}
  \item $\sn \rest \RV^{\times}$ is a homomorphism of multiplicative groups and $\sn(\infty) = 0$,
  \item $\sn(t) \in t$ for every $t \in \RV$,
  \item $\sn(\K^{\times}) \cup \set{0}$ is a subfield of $\OO$.
\end{enumerate}
Similarly,  $\sn$ is a \emph{section} of $\K$ if it is the restriction of a section of $\RV$ to $\K^{\times}$ augmented by $\sn(t) = 0$ for every $t \in \RV \mi \K^{\times}$.
\end{defn}
Many discrete valued fields are equipped with a natural section of $\RV$ and hence a section of $\K$, for example, any field of formal Laurent series. The expansion of $\lan{RV}$ with such a function symbol $\sn$ shall be denoted as $\lan{RV}^{\dag}$. The theory $\ACVF^{\dag}$ in $\lan{RV}^{\dag}$ says that, in addition to the axioms of $\ACVF$, the function $\sn$ is a section of $\RV$. Similarly the theory $\ACVF^{\ddag}$ in $\lan{RV}^{\dag}$ says that the function $\sn$ is a section of $\K$.

For \cmin-minimality, the reader is referred to~\cite{macpherson:steinhorn:variants:ominimal, haskell:macpherson:1994} for some basic results concerning this notion. However, in this paper we use a specialized version of the \cmin-minimality condition that is simpler than the original one.

\begin{defn}
A subset $\gb$ of $\VF$ is an \emph{open ball} if there is a
$\gamma \in \Gamma$ and a $b \in \gb$ such that $a \in \gb$ if and
only if $\vv(a - b) > \gamma$. It is a \emph{closed ball} if $a
\in \gb$ if and only if $\vv(a - b) \geq \gamma$. It is an
\emph{$\rv$-ball} if $\gb = \rv^{-1}(t)$ for some $t \in \RV$. The
value $\gamma$ is the \emph{radius} of $\gb$, which is denoted as
$\rad (\gb)$. Each point in $\VF$ is a closed ball of radius $\infty$ and $\VF$ is a clopen ball of radius $- \infty$.

If $\vv$ is constant on $\gb$ --- that is, $\gb$ is contained in an $\rv$-ball --- then $\vv(\gb)$ is the \emph{valuative center} of $\gb$; if $\vv$ is not constant on
$\gb$, that is, $0 \in \gb$, then the \emph{valuative center} of
$\gb$ is $\infty$. The valuative center of $\gb$ is denoted by
$\vcr(\gb)$.

A subset $\gp \sub \VF^n \times \RV^m$ is an (\emph{open, closed, $\rv$-}) \emph{polydisc} if it is of the form $(\prod_{i \leq n} \gb_i) \times \set{\lbar t}$, where each $\gb_i$ is an (open, closed, $\rv$-) ball and $\lbar t \in \RV^m$.
If $\gp$ is a polydisc then the \emph{radius} of $\gp$, denoted as $\rad(\gp)$,
is $\min \set{\rad(\gb_i) : i \leq n}$. The open and closed polydiscs centered at a sequence of elements $\lbar a = (a_1, \ldots, a_n) \in \VF^n$ with radii $\lbar \gamma = (\gamma_1, \ldots, \gamma_n) \in \Gamma^n$ are respectively denoted as $\go(\lbar a, \lbar \gamma)$ and $\gc(\lbar a, \lbar \gamma)$.
\end{defn}

\begin{defn}
Let $\LL$ be a language expanding $\lan{RV}$. Let $M$ be a structure of $\LL$ that satisfies the axioms for valued fields. We say that $M$ is \emph{\cmin-minimal} if every parametrically definable subset of $\VF(M)$ is a boolean combination of balls. An $\LL$-theory $T$ is \emph{\cmin-minimal} if every model of $T$ is \emph{\cmin-minimal}.
\end{defn}

For motivation and basic results concerning $b$-minimality, the reader should consult~\cite{cluckers:loeser:bminimality}. For convenience, here we describe what $b$-minimality means for valued fields considered as $\lan{RV}$-structures.

\begin{defn}\label{def:b:min}
Let $\LL$ be a language expanding $\lan{RV}$. Any sort of $\LL$ other than the $\VF$-sort is called an \emph{auxiliary sort} and any subset of a product of some auxiliary sorts is called an \emph{auxiliary subset}. Let $M$ be a structure of $\LL$ that satisfies the axioms for valued fields. We say that $M$ is \emph{$b$-minimal} if the following three conditions are satisfied for every set of parameters $S$, every $S$-definable subset $A$ of $\VF(M)$, and every $A$-definable function $f : A \fun \VF(M)$.
\begin{itemize}
  \item[(b1)] There exists an $A$-definable function $P : A \fun U$ with $U$ auxiliary such that for each $t \in U$ the fiber $P^{-1}(t)$ is a point or a ball.
  \item[(b2)] If $g$ is a definable function from an auxiliary subset to a ball of radius $< \infty$ then $g$ is not surjective.
  \item[(b3)] There exists an $A$-definable function $P : A \fun U$ with $U$ auxiliary such that for each $t \in U$ the restriction $f \rest P^{-1}(t)$ is either injective or constant.
\end{itemize}
An $\LL$-theory $T$ is \emph{$b$-minimal} if every model of $T$ is $b$-minimal.
\end{defn}

\section{Quantifier elimination}\label{section:qe}

In this section we shall use Shoenfield's test~\cite{Shoen71} to show that $\ACVF$, $\ACVF^{\dag}$, and $\ACVF^{\ddag}$ all admit QE. Our strategy is to reduce the task to a case where we may apply Theorem~\ref{weis:qe}. This is based on the following simple observation.

\begin{rem}\label{L:v:and:L:RV}
With the imaginary $\Gamma$-sort and the valuation map $\vv$, $\lan{RV}$ may be viewed as an expansion of $\lan{v}$. Henceforth we shall refer to the two sorts of $\lan{v}$ as the $\VF$-sort and the $\Gamma$-sort. Under the natural interpretations, each valued field may be turned into an $\lan{RV}$-structure and an $\lan{v}$-structure. In fact, two valued fields are monomorphic as $\lan{RV}$-structures if and only if they are monomorphic as $\lan{v}$-structures. Let $K$, $L$ be two valued fields and $f' : K' \fun L'$ an $\lan{RV}$-isomorphism of two $\lan{RV}$-substructures of $K$, $L$. If $K'$ is $\VF$-generated then $f'$ may also be treated as an $\lan{v}$-isomorphism. If $L$ is sufficiently saturated then, by Theorem~\ref{weis:qe}, $f'$ may be extended to an $\lan{v}$-monomorphism $f : K \fun L$, which is also an $\lan{RV}$-monomorphism. Note that this procedure may fail if $K'$ is not $\VF$-generated, that is, if $\rv(\VF(K')) \neq \RV(K')$.

Of course everything said above is also true if $\lan{v}$ is replaced with Robinson's language $\lan{val}$.
\end{rem}

\begin{lem}\label{rv:root:lift:vf}
Let $B \sub M \models \ACVF$, $b_i \in \VF(B)$, and $F(X) = \sum_{0 \leq i \leq n} b_iX^i$. Let $t \in \RV(M)$ and $\lbar F(t) = \sum_{0 \leq i \leq n} t_it^i$ be a nonzero polynomial with coefficients in $\RV^{\times} \cup \set{0}$ such that $t_i = \rv(b_i)$ whenever $t_i \neq 0$. If $\lbar F(t) = 0$ and $\vrv(\rv(b_i)t^i) > 0$ for all $t_i = 0$, then there is a $b \in t$ such
that $F(b) = 0$.
\end{lem}
\begin{proof}
Without loss of generality we may assume $b_n \neq 0$. Let $F^*(X) = \sum_{t_i \neq 0} b_iX^i$. Fix a $t \in \RV(M)$ with $\lbar F(t) = 0$ and $\vrv(\rv(b_i)t^i) > 0$ for all $t_i = 0$. Note that, since such a $t$ exists and $\lbar F(X)$ is not the zero polynomial, we must have that $\lbar F(X)$ is not a monomial and $t \neq \infty$. This means that, for every $t_i \neq 0$, $\vrv(t_i t^i) = 0$. Let $m$ be the least number such that $t_m \neq 0$ and $l$ the greatest number such that $t_l \neq 0$. Fix a $b \in t$. Since $\vv(b_mb^m) = \vv(b_lb^l) = 0$ and $\vv(b_ib^i) > 0$ for all $t_i \neq 0$, $(m, \vv(b_m / b_n))$ and $(l, \vv(b_l / b_n))$ must be two adjacent vertices of the Newton polygon of $F(X)/ b_n$. Let $r_1, \ldots, r_n \in \VF(M)$ be the (possibly repeated) roots of $F(X)/ b_n$. For any $b \in t$, if $\rv(b) \neq \rv(r_i)$ for every $i$ then
\[
\begin{cases}
 \vv(b - r_i) = \vv(b),        &\text{ if } \vv(b) < \vv(r_i);\\
 \vv(b - r_i) = \vv(r_i),      &\text{ if } \vv(b) \geq \vv(r_i).
\end{cases}
\]
By the basic properties of Newton polygons, we have
\[
\sum_i \vv(b - r_i) = m \vv(b) + \sum_{\vv(r_i) \leq \vv(b)} \vv(r_i) = \vv(b_mb^m/b_n)
\]
and hence $\vv(F(b)) = \vv(b_mb^m) = 0$. So $\vv(F^*(b)) = 0$, contradicting the
choice of $t$. So $t = \rv(b) = \rv(r_i)$ for some $i$.
\end{proof}

\subsection{QE in $\ACVF$}
Now we fix two models $M$, $N \models \ACVF$ such that $N$ is $\norm{M}^+$-saturated. We shall work with a fixed substructure $S \sub M$ and a fixed monomorphism $f: S \fun N$.
%

\begin{lem}
There is a monomorphism $f^*: S^* \fun N$ extending $f$ such that
\begin{enumerate}
  \item $\VF(S^*) = \VF(S)$,
  \item $\K(S^*) = \K(M)$,
  \item $\Gamma(S^*)$ is the divisible hull of $\Gamma(S)$.
\end{enumerate}
\end{lem}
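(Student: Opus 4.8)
The plan is to build $S^*$ in three successive extension steps, each time enlarging one of the $\RV$-related pieces while keeping $\VF(S)$ fixed, and at each step checking that the partial $\lan{RV}$-isomorphism $f$ extends. Since throughout we are only adding elements to the auxiliary ($\RV$-)part of the structure, Remark~\ref{L:v:and:L:RV} does not apply directly; instead we must extend $f$ ``by hand'' using saturation of $N$ and the fact that the structure on $\RV$ is, via $\vrv$, an extension of algebraically closed field $\K$ by divisible ordered abelian group $\Gamma$. Concretely: first pass to the divisible hull of $\Gamma(S)$ inside $\RV(M)$; then enlarge $\K(S)$ to all of $\K(M)$; and organize the argument so that these two moves together yield the claim. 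A convenient order is in fact (3) then (2), since the divisible hull of $\Gamma(S)$ is canonically determined and small, whereas the residue field extension is where the real content lies.

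For the $\Gamma$-step, let $t \in \RV(M)$ with $\vrv(t) = \gamma$ and suppose $n\gamma \in \Gamma(S)$ for some $n \geq 1$, say $n\gamma = \vrv(s)$ with $s \in \RV(S)$. We want to adjoin $t$ to $S$ and define $f(t)$. The element $t^n/s$ lies over $0 \in \Gamma$, i.e. $t^n/s \in \K^\times(M)$; since $\K(S)$ need not contain it, we first need a value for $f$ on $t^n/s$, but that is handled by the later $\K$-step, so at this stage it is cleanest to first adjoin all of $\K(M)$. So: do (2) first. For the $\K$-step, we must extend $f$ to an isomorphism sending $\K(\langle \VF(S), \K(M)\rangle)$ into $\K(N)$. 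Because $\K(M)$ is an algebraically closed field and $\K(f(S)) \subseteq \K(N)$ with $\K(N)$ algebraically closed and sufficiently saturated, there is a field embedding $\K(M) \hookrightarrow \K(N)$ extending $f \rest \K(S)$ — this is just the model-theoretic fact that $\ACF$ has QE, applied in $N$ with $\norm{M}^+$-saturation absorbing the transcendence degree. One checks this respects $\leq$ trivially (everything in $\K^\times$ is $\vrv$-equivalent to $1$) and respects $\times$ and $+$ by construction, and that no $\VF$-sort elements are added, so $\VF(S^*) = \VF(S)$ is automatic. Call the result $S^{**}$.

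For the $\Gamma$-step applied to $S^{**}$: given $t \in \RV(M)$ with $\vrv(t)^{\,n} \in \Gamma(S^{**})$, write $t^n = u\cdot s$ with $s \in \RV^\times(S^{**})$ and $u \in \K^\times(M) = \K^\times(S^{**})$. Using that $\K^\times(S^{**})$ is divisible (it is an algebraically closed field's multiplicative group) pick $v \in \K^\times(S^{**})$ with $v^n = u$, so $(t/v)^n = s$; thus we may as well assume $t^n = s \in \RV^\times(S^{**})$. In $N$, saturation lets us choose an $n$-th root $t'$ of $f(s)$ in $\RV^\times(N)$; any two such roots differ by an $n$-th root of unity in $\K^\times(N)$, and since $\K(N)$ is algebraically closed and our target is merely to extend $f$ to an $\lan{RV}$-isomorphism (not a unique one), any choice works. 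Then set $f(t) = t'$; extend multiplicatively and by $\vrv$ to the divisible hull. One verifies compatibility with $\leq$ via the ordered-group structure on $\Gamma$ — $n\vrv(t) = \vrv(s)$ pins down $\vrv(t)$ in the divisible hull — and with $\times$ and $+$ (the latter only constrains $\K^\times(S^{**})$, already handled). Iterating over all such $t$ (a small set, as $\Gamma(S^{**})/\Gamma(S^{**})$-torsion in the divisible hull is determined) and taking the union gives the desired $f^* : S^* \fun N$.

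The main obstacle is the $\K$-step, specifically verifying that the field embedding $\K(M) \hookrightarrow \K(N)$ produced from $\ACF$-QE and saturation actually underlies an $\lan{RV}$-embedding of the enlarged substructure $\langle \VF(S), \K(M) \rangle$ — one must check that adjoining $\K(M)$ to $S$ inside $M$ does not force any new $\VF$-sort elements or new $\Gamma$-values that $f$ would then have to account for, i.e. that $\VF(\langle \VF(S), \K(M)\rangle) = \VF(S)$ and $\vrv(\RV(\langle \VF(S),\K(M)\rangle)) = \Gamma(S)$. This is where the precise axioms of $\ACVF$ in $\lan{RV}$ (the exact sequence $1 \to \K^\times \to \RV^\times \to \Gamma \to 0$ and $\rv$ being a group homomorphism with $\OO = \rv^{-1}(\RV^{\geq 1})$) are needed: elements of $\K^\times(M)$ are units of $\OO(M)$ with $\rv$ already $1$, so they generate no new valuation data and, not being in the image of any polynomial over $\VF(S)$ forced to land in $\VF$, add nothing to the $\VF$-sort.
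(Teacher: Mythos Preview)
Your approach is essentially the same as the paper's: first extend $f \rest \K(S)$ to a field embedding $g : \K(M) \to \K(N)$ and define $f_1$ on $S_1 = \langle \K(M), \RV(S)\rangle$ by the explicit formula $ts \mapsto g(t)f(s)$, then adjoin $n$-th roots in $\RV^\times$ to reach the divisible hull of $\Gamma(S)$. The paper's explicit formula for $f_1$ immediately dispatches the well-definedness worry in your last paragraph (since $ts = t's'$ forces $t/t' = s'/s \in \K(S)$), and in the $\Gamma$-step the paper takes $n$ minimal with $t^n \in S_1$ but $t^i \notin S_1$ for $0 < i < n$ --- a condition you should add, since without it your choice of $t'$ need not satisfy $(t')^m = f(t^m)$ for smaller $m$ with $t^m \in S^{**}$.
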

\begin{proof}
First of all, there is a field homomorphism $g: \K(M) \fun \K(N)$ extending $f \rest \K(S)$. Let $S_1$ be the substructure $\seq{\K(M), \RV(S)}$. Let $f_1 : S_1 \fun N$ be the monomorphism determined by
\[
ts \efun g(t)f(s) \text{ for all } t \in \K(M) \text{ and } s \in \RV(S).
\]
Next, let $n > 1$ be the least natural number such that there is a $t
\in \RV(M)$ with $t^n \in S_1$ but $t^i \notin S_1$ for every $0 < i < n$. Let $r \in \RV(N)$ such that $f_1(t^n) = r^n$. Let $f_2 : \seq{S_1, t} \fun N$ be the monomorphism determined by
\[
ts \efun r f_1(s) \text{ for all } s \in S_1.
\]
Iterating this procedure the lemma follows.
\end{proof}

By this lemma, we may and shall assume that $\K(S) = \K(M)$ and $\Gamma(S)$ is divisible.

Let $\hat S = \seq{\VF(S)}$. Fix an $e \in \VF(M)$ such that $\rv(e) \in \RV(S) \mi \RV(\hat S)$. In the next few lemmas, under various assumptions, we
shall prove the following claim:
\begin{sta}
$\RV(\la \hat S, e \ra) = \la \RV(\hat S), \rv(e) \ra \sub \RV(S)$ and the monomorphism $f \rest \hat S$ may be extended to another monomorphism $f^* : \la \hat S, e \ra \fun N$ such that $f^*(\rv(e)) = f(\rv(e))$.
\end{sta}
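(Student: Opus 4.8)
The plan is to analyze what happens when we adjoin a single $\VF$-element $e$ with $\rv(e) \in \RV(S)\setminus\RV(\hat S)$ to the $\VF$-generated substructure $\hat S = \seq{\VF(S)}$. The substructure $\la\hat S,e\ra$ is again $\VF$-generated, so its $\RV$-part is $\rv(\VF(\la\hat S,e\ra)) = \rv(\VF(S)[e])$; the task is to show this equals $\la\RV(\hat S),\rv(e)\ra$, and then to produce the extension of $f\rest\hat S$. I would first observe that $\VF(\hat S) = \VF(S)$ is a field (by our standing convention) and that $\VF(\la\hat S,e\ra)$ is the field $\VF(S)(e)$ generated by $e$. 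Every element of $\VF(S)(e)$ is a ratio $P(e)/Q(e)$ with $P,Q \in \VF(S)[X]$, so it suffices to control $\rv(P(e))$ for a polynomial $P(X) = \sum_{i\le n} b_i X^i$ with $b_i \in \VF(S)$.

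The key step is a valuation-theoretic computation: I would show that for such a $P$, either $\rv(P(e)) \in \la\RV(\hat S),\rv(e)\ra$ outright, or else a certain ``ultrametric cancellation'' among the monomials $\rv(b_i)\rv(e)^i$ forces, via Lemma~\ref{rv:root:lift:vf}, the existence of a root $b \in \VF(S)$ (or an algebraic extension thereof already inside $\hat S$, since $\hat S$ is $\VF$-generated and $\VF(S)$ is relatively algebraically closed in the relevant sense) with $\rv(b) = \rv(e)$ --- but that would put $\rv(e) \in \RV(\hat S)$, contradicting the choice of $e$. More precisely: $\vv(P(e)) = \min_i \vv(b_i e^i)$ as long as this minimum is attained uniquely, in which case $\rv(P(e)) = \rv(b_i)\rv(e)^i$ for the minimizing $i$, and that lies in $\la\RV(\hat S),\rv(e)\ra$. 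If the minimum is attained at two or more indices, we are exactly in the situation where the associated ``leading form'' $\lbar F(\rv(e)) = \sum_{i : \vv(b_ie^i) \text{ min}} \rv(b_i)\rv(e)^i$ could vanish; if it does vanish, Lemma~\ref{rv:root:lift:vf} yields $\rv(e) = \rv(b)$ for some root $b$ of the corresponding polynomial over $\VF(S)$, contradicting $\rv(e)\notin\RV(\hat S)$; if it does not vanish, then again $\rv(P(e))$ is determined (as the value of the leading form) and lies in $\la\RV(\hat S),\rv(e)\ra$. Ratios $P(e)/Q(e)$ are handled by taking the quotient of the two resulting $\RV$-values. This establishes $\RV(\la\hat S,e\ra) = \la\RV(\hat S),\rv(e)\ra \subseteq \RV(S)$.

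For the extension of the monomorphism, I would send $e$ to any $e' \in \VF(N)$ with $\rv(e') = f(\rv(e))$; such an $e'$ exists by saturation of $N$ and since $f(\rv(e)) \in \RV(N)$ lies in the image and $\rv$ is surjective onto $\RV^\times$. The map $e \mapsto e'$ extends $f\rest\hat S$ to a ring homomorphism on $\VF(S)[e]$ because $e$ is transcendental over $\VF(S)$: indeed, if $e$ were algebraic over $\VF(S)$ then, $\VF(S)$ being a field and $\hat S$ being $\VF$-generated by it, the roots of the minimal polynomial lie in $\VF(\hat S)$, forcing $\rv(e) \in \RV(\hat S)$, again a contradiction. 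So $\VF(S)[e]$ is a polynomial ring, $e\mapsto e'$ determines a unique $\VF(S)$-algebra embedding, and by the valuation computation above it is compatible with $\rv$ and $\leq$ on the generated $\RV$-structure; one then extends to the fraction field $\VF(S)(e)$, and to the algebraic closure inside $\la\hat S,e\ra$ using the homogeneity of $N$. The main obstacle is the valuation computation in the ``tie'' case --- verifying that a vanishing leading form really does trigger the hypotheses of Lemma~\ref{rv:root:lift:vf} (in particular the condition $\vrv(\rv(b_i)t^i) > 0$ for the non-leading terms), and checking that the root produced actually lies in $\VF(\hat S)$ rather than merely in some extension --- and ensuring that the contradiction with $\rv(e)\notin\RV(\hat S)$ is genuinely reached in every subcase.
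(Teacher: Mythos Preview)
Your proposal has a fundamental misreading of the paper: Claim~$(\star)$ is \emph{not} a standalone statement to be proved once and for all. The paper says ``In the next few lemmas, under various assumptions, we shall prove the following claim,'' and then establishes it separately under four different hypotheses (Lemmas~\ref{lift:all:root}--\ref{lift:value:full}): henselian $\hat S$ with $e$ a root of a residually irreducible polynomial; $e$ an $n$th root creating new value; $\rv(e)$ transcendental over $\K(\hat S)$ with $\Gamma(\hat S)$ divisible; and $\K(\hat S)=\K(M)$ with $\Gamma(\hat S)$ divisible. Proposition~\ref{lrv:mono:lift} then orchestrates these special cases (plus henselization) to build the full extension. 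There is no single proof of Claim~$(\star)$ because, as stated unconditionally, it is \emph{false}.

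Both contradictions you reach for are illusory, and this is why. First, Lemma~\ref{rv:root:lift:vf} produces a root $b$ of $F$ in $\VF(M)$, not in $\VF(\hat S)$; since $\VF(\hat S)$ is an arbitrary field, there is no reason $b$ (or any conjugate) should lie there, so $\rv(b)=\rv(e)$ does \emph{not} force $\rv(e)\in\RV(\hat S)$. Second, your claim that $e$ must be transcendental over $\VF(S)$ is simply wrong: $\VF(\hat S)=\VF(S)$ need not be algebraically closed, so algebraic $e$ need not lie in it---indeed Lemma~\ref{lift:all:root} is precisely about algebraic $e$. For an explicit failure of your scheme, take $\VF(\hat S)$ with residue field $\K(\hat S)$ not algebraically closed, pick $e\in\UU(M)$ transcendental over $\VF(\hat S)$ with $\rv(e)$ algebraic over $\K(\hat S)$ (say $\rv(e)^2=2$), and choose $e$ so that $\vv(e^2-2)\notin\Gamma(S)$. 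Then $\rv(e^2-2)\in\RV(\la\hat S,e\ra)$ lies outside $\RV(S)\supseteq\la\RV(\hat S),\rv(e)\ra$, so Claim~$(\star)$ fails outright. Your leading-form analysis is essentially what the paper does in Lemma~\ref{lift:transcendental}, but there the hypothesis that $\rv(e)$ is transcendental over $\K(\hat S)$ is exactly what guarantees the leading form cannot vanish; without that (or one of the other hypotheses), the argument does not go through.
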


Note that Claim~$(\star)$ immediately implies $f^* \rest (\la \hat S, e \ra \cap S) \sub f$.

\begin{lem}\label{lift:all:root}
Let $F(X) = X^n + \sum_{0 \leq i < n}a_{i}X^{i} \in \OO(\hat S)[X]$ such that its projection $\lbar F(X)$ to $\K(\hat S)[X]$ is an irreducible polynomial. Suppose that $e \in \UU(M)$ and is a root of $F(X)$. If the valued
field $(\VF(\hat S), \OO(\hat S))$ is henselian, then Claim~$(\star)$ holds.
\end{lem}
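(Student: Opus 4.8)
The claim has two halves: first the identity $\RV(\la \hat S, e\ra) = \la \RV(\hat S), \rv(e)\ra$, and second the extendability of $f\rest\hat S$. For the first half, I would argue that adjoining the single root $e$ of the polynomial $F$ cannot create genuinely new $\RV$-elements beyond those already forced by $\rv(e)$. The key point is that $\VF(\la\hat S, e\ra) = \VF(\hat S)(e)$, a simple algebraic extension, and every element of this field is a polynomial in $e$ over $\VF(\hat S)$ of degree $< n$. So it suffices to show that for any $G(X) = \sum_{0\le i<n} c_i X^i$ with $c_i \in \VF(\hat S)$, the value $\rv(G(e))$ lies in $\la\RV(\hat S),\rv(e)\ra$. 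Here is where henselianity and the irreducibility of $\lbar F$ enter: since $\lbar F$ is irreducible over the residue field $\K(\hat S)$, the residue extension $\K(\hat S)(\lbar e)$ has degree $n$ where $\lbar e$ is the residue of $e$ (recall $e\in\UU(M)$), and by henselianity the extension is unramified, so $\Gamma(\la\hat S,e\ra) = \Gamma(\hat S)$ and $\K(\la\hat S, e\ra) = \K(\hat S)(\lbar e)$. Then $\RV$ of the extension is recovered from $\K(\hat S)(\lbar e)$ and $\Gamma(\hat S)$, both of which sit inside $\la\RV(\hat S), \rv(e)\ra$ (the former because $\rv(e)$ determines $\lbar e = \rv(e)$ as an element of $\K^\times$, $\rv(e)$ being a unit), giving the desired identity.

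For the second half, I would use that $f\rest\hat S$ is an $\lan{RV}$-isomorphism onto its image $\hat S'\sub N$, and transport $F$ to $F'(X) = X^n + \sum_{0\le i<n} f(a_i)X^i \in \OO(\hat S')[X]$. By hypothesis $\lbar F$ is irreducible over $\K(\hat S)$, hence $\lbar{F'}$ is irreducible over $\K(\hat S') = f(\K(\hat S))$. Since $N\models\ACVF$, its value field $\VF(N)$ is algebraically closed; but more to the point, I need a root $e'\in\UU(N)$ of $F'$ with $\rv(e') = f(\rv(e))$. The existence of \emph{some} root of $F'$ in $\VF(N)$ is automatic since $\VF(N)$ is algebraically closed; the constraint $\rv(e') = f(\rv(e))$ is the content. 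I would observe that $f(\rv(e))\in\RV(N)$ is a unit whose residue $\lbar{f(\rv(e))} = f(\lbar e)$ is a root of $\lbar{F'}$ in $\K(N)$ — indeed $\lbar e$ is a root of $\lbar F$ (as $e$ is a root of $F$ and $e\in\UU$), so applying $f$ to the residue gives a root of $\lbar{F'}$. By henselianity of $(\VF(N),\OO(N))$ — which holds since $N\models\ACVF$ — there is a unique root $e'\in\UU(N)$ of $F'$ with residue $\lbar{e'} = f(\lbar e)$, and since $e'$ is a unit, $\rv(e') = \lbar{e'} = f(\lbar e) = f(\rv(e))$ as required.

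Having produced $e'$, I would define $f^*:\la\hat S, e\ra\fun N$ on the $\VF$-sort by the field homomorphism $\VF(\hat S)(e)\fun\VF(N)$ sending $e\mapsto e'$ and extending $f\rest\VF(\hat S)$; this is well-defined because $e$ and $e'$ satisfy the same minimal polynomial $F$ over $\VF(\hat S)$ (up to $f$), noting $F$ is irreducible over $\VF(\hat S)$ since $\lbar F$ is irreducible over $\K(\hat S)$. The induced map on the $\RV$-sort is then determined by the identity from the first half: since $\RV(\la\hat S, e\ra) = \la\RV(\hat S),\rv(e)\ra$, the map $f^*$ on $\RV$ is the unique extension of $f\rest\RV(\hat S)$ with $\rv(e)\mapsto\rv(e')=f(\rv(e))$, and one checks it is compatible with $\rv$ on the new $\VF$-elements — this compatibility follows because $\rv\circ f^* = f^*\circ\rv$ holds on generators and both sides are determined by the field structure and the fixed residue/value data. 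Finally I would verify $f^*$ respects $+,-$ on $\RV_0$ and the ordering $\leq$; this is routine since the $\Gamma$-part is unchanged (unramified extension) and the residue-field part is handled by the field homomorphism $\K(\hat S)(\lbar e)\fun\K(N)$.

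The main obstacle is pinning down the first half — the identity $\RV(\la\hat S, e\ra) = \la\RV(\hat S),\rv(e)\ra$ — cleanly. One must be careful that no \emph{new} $\RV$-elements appear: a priori an element $G(e)$ with $G$ of degree $< n$ could have $\rv$-value not manifestly in the generated structure. The resolution is exactly the unramifiedness coming from henselianity plus irreducibility of $\lbar F$ (a standard fact: an immediate consequence is that $[\VF(\hat S)(e):\VF(\hat S)] = [\K(\hat S)(\lbar e):\K(\hat S)] = n$ with no growth in $\Gamma$, so the residue field picks up exactly $\lbar e$ and the value group does not change), but making this precise at the level of the $\RV$-sort — showing every $\rv(G(e))$ is a product of a power of $\rv(e)$ and an element of $\RV(\hat S)$ — requires a short computation with the residue of $G(e)/e^k$ for appropriate $k$. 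I would invoke Lemma~\ref{rv:root:lift:vf} or its proof technique (Newton polygon bookkeeping) only if the direct residue-field argument proves awkward, but I expect the unramified structure theory to suffice directly.
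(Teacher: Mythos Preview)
Your plan is essentially the paper's proof, in reversed order: the paper first produces the root $d\in\VF(N)$ with $\rv(d)=f(\rv(e))$ via Lemma~\ref{rv:root:lift:vf}, then invokes henselianity of $(\VF(\hat S),\OO(\hat S))$ to obtain the valued-field embedding (and Remark~\ref{L:v:and:L:RV} to upgrade it to an $\lan{RV}$-monomorphism), and only afterwards reads off $\RV(\la\hat S,e\ra)=\la\RV(\hat S),\rv(e)\ra$ from the fundamental inequality, exactly as you describe.

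Two small points where your execution is looser than the paper's. First, your appeal to henselianity of $N$ to produce $e'$ tacitly assumes $f(\lbar e)$ is a \emph{simple} root of $\lbar{F'}$; when $\cha\K>0$ and $\lbar F$ is inseparable this fails, and you need Lemma~\ref{rv:root:lift:vf} (which you note as a fallback). Second, the justification ``$\rv\circ f^*=f^*\circ\rv$ holds on generators and both sides are determined by the field structure'' is not an argument, since $\rv$ is not additive. The correct reason the field map $e\mapsto e'$ respects the valuation is precisely the henselianity of $(\VF(\hat S),\OO(\hat S))$: it forces the extension of the valuation to $\VF(\hat S)(e)$ to be unique, hence any field embedding into a valued extension is automatically a valued-field embedding. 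The paper states this explicitly; your plan has the ingredient available but misattributes the mechanism.
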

\begin{proof}
Since $\rv(e)$ is a root of $\lbar F(X)$, $f(\rv(e))$ is a root of the irreducible polynomial $f(\lbar F(X))$. By Lemma~\ref{rv:root:lift:vf}, there is a root $d \in \VF(N)$ of $f(F(X))$ such that $\rv(d) = f(\rv(e))$. Since $F(X)$, $f(F(X))$ are irreducible over $\VF(\hat S)$, $f(\VF(\hat S))$, respectively, and $(\VF(\hat S), \OO(\hat S))$ is henselian, there is a valued field embedding $f^* : \la \hat S, e \ra \fun N$
with $f^*(e) = d$ that extends the valued field embedding $f$. By Remark~\ref{L:v:and:L:RV}, $f^*$ may be naturally converted into an $\lan{RV}$-monomorphism that extends the $\lan{RV}$-monomorphism $f$.

Now, by the fundamental inequality of valuation theory (see~\cite[Theorem~3.3.4]{engler:prestel:2005}), we have
\[
[\K(\la \hat S,e \ra): \K(\hat S)] = [\VF(\la \hat S,e \ra):\VF(\hat S)],
\]
and hence
\[
\K(\la \hat S,e \ra) = \K(\hat S)(\rv(e)), \quad
\Gamma(\la \hat S,e \ra) = \Gamma(\hat S).
\]
Therefore $\RV(\la \hat S, e \ra) = \la \RV(\hat S), \rv(e) \ra$.
\end{proof}

\begin{lem}\label{lift:divisible:hull}
Suppose that $e \notin \UU(M)$, $e^n = a \in \VF(\hat S)$ for some
integer $n > 1$, and $\vv(e^i) \notin \Gamma(\hat S)$ for all $0< i <
n$. If $(\VF(\hat S), \OO(\hat S))$ is henselian, then Claim~$(\star)$
holds.
\end{lem}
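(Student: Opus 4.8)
The plan is to follow the pattern of Lemma~\ref{lift:all:root}, replacing the unramified residual extension there with a totally ramified Kummer extension. Since $e \notin \UU(M)$ we have $\vv(e) \neq 0$, and since $\rv(e) \notin \RV(\hat S) \ni \infty$ we have $e \neq 0$, hence $a = e^n \in \VF(\hat S)^{\times}$. The hypotheses say exactly that the image of $\vv(e)$ in $\Gamma(M)/\Gamma(\hat S)$ has order $n$: indeed $n\vv(e) = \vv(a) \in \Gamma(\hat S)$ while $i\vv(e) \notin \Gamma(\hat S)$ for $0 < i < n$. Thus the valued field extension $\VF(\la\hat S,e\ra)/\VF(\hat S)$ (valuation inherited from $M$) has $[\Gamma(\la\hat S,e\ra) : \Gamma(\hat S)] \ge n$, while its degree is $\le n$ because $e$ is a root of $X^n - a$; by the fundamental inequality (Engler--Prestel, Theorem~3.3.4, as in Lemma~\ref{lift:all:root}) the degree is exactly $n$, the ramification index is $n$, and the residue degree is $1$. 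In particular $X^n - a$ is irreducible over $\VF(\hat S)$, $\Gamma(\la\hat S,e\ra) = \Gamma(\hat S) + \Z\vv(e)$, and $\K(\la\hat S,e\ra) = \K(\hat S)$; combining this with the exact sequence $1 \to \K^{\times} \to \RV^{\times} \to \Gamma \to 1$ and the relation $\rv(e)^n = \rv(a) \in \RV(\hat S)$ yields $\RV(\la\hat S,e\ra) = \la\RV(\hat S),\rv(e)\ra \sub \RV(S)$, the first assertion of Claim~$(\star)$.

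For the extension of $f$, I would first produce a target for $e$. Since $\VF(N)$ is algebraically closed, fix $d_0 \in \VF(N)$ with $d_0^n = f(a)$. Then $\rv(d_0)^n = \rv(f(a)) = f(\rv(a)) = f(\rv(e))^n$, and since $\Gamma(N)$ is torsion-free one checks $\vrv(\rv(d_0)) = \vrv(f(\rv(e)))$, so $\zeta := \rv(d_0)\cdot f(\rv(e))^{-1}$ lies in $\K(N)^{\times}$ and $\zeta^n = 1$. As $\K(N)$ is algebraically closed and $\VF(N)$ is henselian, $\zeta$ lifts to some $\tilde\zeta \in \mu_n(\VF(N))$ with $\rv(\tilde\zeta) = \zeta$ (reduce to the prime-to-$\cha\K(N)$ part of $n$, where $X^m - 1$ is separable and Hensel's lemma applies). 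Setting $d := \tilde\zeta^{-1}d_0$ we obtain $d^n = f(a)$ and $\rv(d) = f(\rv(e))$. Now define $f^* : \la\hat S, e\ra \fun N$ by $e \efun d$; since $X^n - a$ is irreducible and $d^n = f(a)$ this is a well-defined field homomorphism extending $f \rest \VF(\hat S)$, and it is a valued field embedding because for $0 \le i < n$ the values $\vv(e^i)$ are pairwise distinct modulo $\Gamma(\hat S)$, so $\vv(\sum_i c_ie^i) = \min_i(\vv(c_i) + i\vv(e))$ for $c_i \in \VF(\hat S)$, and the same identity holds on the image using $\vv(d) = \vrv(\rv(d)) = \vrv(f(\rv(e))) = f(\vv(e))$ and the fact that $f$ is order-preserving on $\Gamma$ --- alternatively one invokes uniqueness of the valuation extension over the henselian $\VF(\hat S)$, exactly as in Lemma~\ref{lift:all:root}. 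Since $\la\hat S, e\ra$ is $\VF$-generated, Remark~\ref{L:v:and:L:RV} lets us regard $f^*$ as an $\lan{RV}$-monomorphism extending $f \rest \hat S$, and $f^*(\rv(e)) = \rv(d) = f(\rv(e))$, establishing Claim~$(\star)$.

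I expect the only real obstacle to lie in the bookkeeping of the second step: correctly matching $\rv$-values between $M$ and $N$ through $f$, and lifting the residual root of unity $\zeta$ in the case $\cha\K \mid n$. The valuation-theoretic input is routine and essentially identical to that of Lemma~\ref{lift:all:root}.
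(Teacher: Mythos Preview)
Your proof is correct and follows the same overall architecture as the paper: first show $\RV(\la\hat S,e\ra)=\la\RV(\hat S),\rv(e)\ra$, then produce $d\in\VF(N)$ with $d^n=f(a)$ and $\rv(d)=f(\rv(e))$, then extend by $e\efun d$ and invoke henselianity plus Remark~\ref{L:v:and:L:RV}.

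The two implementations differ only in how those two subgoals are realised. For the $\RV$-equality, the paper computes directly: since the values $\vv(b_ie^i)$ with $0\le i<n$ are pairwise distinct modulo $\Gamma(\hat S)$, each $\rv(\sum_i b_ie^i)$ equals some $\rv(b_je^j)\in\la\RV(\hat S),\rv(e)\ra$. You instead route through the fundamental inequality to get $\K(\la\hat S,e\ra)=\K(\hat S)$ and $\Gamma(\la\hat S,e\ra)=\Gamma(\hat S)+\Z\vv(e)$, then read off the $\RV$-equality from the short exact sequence; this is a little more structural but yields the same conclusion. For producing $d$, the paper simply applies Lemma~\ref{rv:root:lift:vf} to the polynomial $X^n/f(a)-1$ and the element $f(\rv(e))$, whereas you pick an arbitrary $n$th root $d_0$ of $f(a)$ and then correct by a lift of the residual root of unity $\zeta=\rv(d_0)\,f(\rv(e))^{-1}$. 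Your correction step is essentially a by-hand special case of Lemma~\ref{rv:root:lift:vf}; your handling of the $\cha\K\mid n$ case (reducing to $\mu_m$ with $m$ the prime-to-$p$ part, where $\mu_n(\K)=\mu_m(\K)$, and lifting via Hensel) is fine. The paper's route is marginally cleaner because it reuses the existing lemma, but there is no gap in yours.
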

\begin{proof}

Any element $b \in \VF(\la \hat S,e \ra)$ may be written as a quotient of
two elements of the form $\sum_{0 \leq i \leq m} b_ie^i$, where
$b_{i} \in \VF(\hat S)$. Since $e^n = a \in \VF(\hat S)$, we may assume $0 \leq m < n$. For any $i < j$, if $b_i$ and $b_j$ are nonzero then $\vv(b_ie^i) \neq \vv(b_je^j)$, because otherwise we would have $\vv(e^{j - i}) \in \Gamma(\hat S)$. So
\[
\rv \biggl( \sum_{0 \leq i \leq m} b_ie^i \biggr) = \rv(b_je^j)
\]
for some $j$. So $\RV(\la \hat S, e \ra) = \la \RV(\hat S), \rv(e) \ra$.


Note that, since the roots of the polynomial $X^n - a$ are all of the same
value, by the assumption on $\vv(e)$, $F(X)$ is irreducible over
$\VF(\hat S)$. Since
\[
\frac{f(\rv(e))^n}{\rv(f(a))} - 1 = 0,
\]
by  Lemma~\ref{rv:root:lift:vf}, there is a root $d \in \VF(N)$ of the polynomial $X^n / f(a) - 1$ such that $\rv(d) = f(\rv(e))$. Now we may proceed exactly as in the previous lemma.
\end{proof}

\begin{lem}\label{lift:transcendental}
Suppose that $e \in \UU(M)$ and $\rv(e)$ is transcendental over $\K(\hat S)$.
If $\Gamma(\hat S)$ is divisible, then Claim~$(\star)$ holds.
\end{lem}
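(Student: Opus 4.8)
The plan is to identify $\la \hat S, e \ra$ with a rational function field over $\VF(\hat S)$ carrying the Gauss (monomial) valuation determined by $\rv(e)$, and then to reproduce that picture inside $N$. First I would note that, since $\rv(e)$ is transcendental over $\K(\hat S)$, the element $e$ must be transcendental over $\VF(\hat S)$: if a nonzero $P(X) \in \VF(\hat S)[X]$ had $P(e) = 0$, then dividing $P$ by one of its coefficients of least value yields a polynomial with coefficients in $\OO(\hat S)$, at least one of them a unit, whose reduction modulo $\MM$ is a nonzero polynomial over $\K(\hat S)$ vanishing at $\rv(e)$ --- a contradiction. Hence $\VF(\la \hat S, e \ra) = \VF(\hat S)(e)$, and since this substructure is $\VF$-generated, $\RV(\la \hat S, e \ra) = \rv(\VF(\hat S)(e)^{\times}) \cup \set{\infty}$.

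Next I would pin down the valuation on $\VF(\hat S)(e)$: for a nonzero $F(X) = \sum_{0 \le i \le n} b_i X^i \in \VF(\hat S)[X]$ one has $\vv(F(e)) = \min_i \vv(b_i)$. The inequality $\ge$ is just the ultrametric inequality; for equality choose $j$ with $\vv(b_j) = \min_i \vv(b_i)$ and look at $F(e)/b_j = \sum_i (b_i/b_j) e^i$, which has coefficients in $\OO(\hat S)$ and reduces modulo $\MM$ to $\sum_i \overline{(b_i / b_j)}\, \rv(e)^i$, a polynomial over $\K(\hat S)$ that is nonzero (its $j$-th coefficient reduces to $1$); transcendence of $\rv(e)$ over $\K(\hat S)$ then forces this reduction to be nonzero, i.e.\ $\vv(F(e)/b_j) = 0$. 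The same computation shows $\rv(F(e)) = \rv(b_j)\cdot \rv(F(e)/b_j)$ with $\rv(F(e)/b_j) \in \K(\hat S)(\rv(e))$, so $\rv(F(e)) \in \la \RV(\hat S), \rv(e) \ra$; passing to quotients gives $\rv(\VF(\hat S)(e)^{\times}) \subseteq \la \RV(\hat S), \rv(e) \ra$, and the reverse inclusion is immediate, so $\RV(\la \hat S, e \ra) = \la \RV(\hat S), \rv(e) \ra$. This is contained in $\RV(S)$ because $\RV(\hat S) \subseteq \RV(S)$, $\rv(e) \in \RV(S)$ by the choice of $e$, and $\RV(S)$ is closed under the $\RV$-operations; also $\Gamma(\la\hat S,e\ra) = \Gamma(\hat S)$ remains divisible. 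This yields the first assertion of Claim~$(\star)$.

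For the extension I would pick $d \in \VF(N)$ with $\rv(d) = f(\rv(e))$; such $d$ exists since $\rv$ maps $\VF^{\times}(N)$ onto $\RV^{\times}(N)$ in $N \models \ACVF$ and $f(\rv(e)) \in \RV^{\times}(N)$, and $\vv(d) = \vrv(f(\rv(e))) = 0$ because $\rv(e) \in \K^{\times}(M)$. As $f \rest \K(M)$ is a field embedding, $f(\rv(e))$ is transcendental over $f(\K(\hat S))$, hence $d$ is transcendental over $f(\VF(\hat S))$ by the reduction argument above applied in $N$. Therefore $e \mapsto d$ extends $f \rest \VF(\hat S)$ to a field isomorphism $f^* : \VF(\hat S)(e) \fun f(\VF(\hat S))(d) \subseteq \VF(N)$; the valuation on the target is again the Gauss valuation determined by $\rv(d) = f(\rv(e))$, so for $F(X) = \sum_i b_i X^i \in \VF(\hat S)[X]$ we get $\vv(f^*(F(e))) = \min_i \vv(f(b_i)) = \min_i \vv(b_i) = \vv(F(e))$, i.e.\ $f^*$ is a valued field embedding. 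Since $\la \hat S, e \ra$ is $\VF$-generated, Remark~\ref{L:v:and:L:RV} lets me read $f^*$ as an $\lan{RV}$-monomorphism $\la \hat S, e \ra \fun N$ extending $f \rest \hat S$, and $f^*(\rv(e)) = \rv(f^*(e)) = \rv(d) = f(\rv(e))$, completing Claim~$(\star)$.

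The only real technical point --- and it is not a deep one --- is the Gauss-valuation computation together with the bookkeeping that a $\VF$-generated substructure (and the notion of morphism between such) is completely determined by its $\VF$-part, so that the $\RV$- and $\Gamma$-structure can be recovered from the underlying valued field and Theorem-\ref{weis:qe}-style extension results apply; transcendence of $\rv(e)$ over $\K(\hat S)$ is exactly what guarantees that the reductions modulo $\MM$ entering that computation do not vanish.
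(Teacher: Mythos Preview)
Your proof is correct and follows the same overall strategy as the paper: show $e$ is transcendental over $\VF(\hat S)$, pick any $d \in f(\rv(e))$, check that $e \mapsto d$ induces a valued-field embedding, and invoke Remark~\ref{L:v:and:L:RV}. The difference lies in how the valuation on $\VF(\hat S)(e)$ is computed. The paper first appeals to the dimension inequality \cite[Theorem~3.4.3]{engler:prestel:2005} together with the divisibility of $\Gamma(\hat S)$ to conclude $\Gamma(\la \hat S, e\ra) = \Gamma(\hat S)$, and then proves by induction on the degree (its inner ``Claim'') that an element of value $0$ in $\VF(\hat S)[e]$ has residue in $\K(\hat S)[\rv(e)]$ and that the same holds for its image under $e \mapsto d$. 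You instead establish the Gauss-valuation formula $\vv\bigl(\sum_i b_i e^i\bigr) = \min_i \vv(b_i)$ in one step by dividing through by a coefficient of minimal value and using transcendence of $\rv(e)$ to see the reduction does not vanish; this simultaneously gives $\Gamma(\la\hat S,e\ra) = \Gamma(\hat S)$, the description of $\RV(\la\hat S,e\ra)$, and (applied on the $N$-side) the value-preservation of $f^*$. Your route is more elementary --- it avoids the external citation and the inductive Claim, and in fact never uses the divisibility hypothesis --- while the paper's route makes the role of the residue-transcendence-degree/value-group trade-off more visible via the dimension inequality.
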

\begin{proof}
Clearly $\rv(e)$ does not contain any element that is algebraic
over $\VF(\hat S)$; in particular, $e$ is transcendental over $\VF(\hat S)$.
Similarly $f(\rv(e))$ does not contain any element that is algebraic over $f(\VF(\hat S))$. Choose a $d \in \VF(N)$ with $\rv(d) = f(\rv(e))$.

By the dimension inequality of valuation theory
(see~\cite[Theorem~3.4.3]{engler:prestel:2005}), the rational rank
of $\Gamma(\la \hat S, e \ra) /\Gamma(\hat S)$ is 0. Since $\Gamma(\hat S)$ is
divisible, we actually have $\Gamma(\la \hat S, e \ra) = \Gamma(\hat S)$.
So for every $b \in \VF(\la \hat S, e \ra)$ there is an $a \in \VF(\hat S)$
such that $\vv(b / a) = 0$. Let
\[
b = \sum_{0 \leq i \leq m} b_ie^i \in \VF(\la \hat S, e \ra), \quad
b^* = \sum_{0 \leq i \leq m} f(b_i)d^i \in \VF(\la f(\hat S),d \ra),
\]
where $b_i \in \VF(\hat S)$.

\begin{incla}
If $\vv(b) = 0$ then
\begin{enumerate}
 \item $\rv(b) \in \K(\hat S)[\rv(e)]$ and $\rv(b^*) \in \K(f(\hat S))[\rv(d)]$,
 \item $\vv(b^*) = 0$.
\end{enumerate}
\end{incla}
\begin{proof}
We do induction on $m$. Without loss of generality we may assume $b_m \neq 0$, $b_0 \neq 0$, and $\vv(b_i) \leq 0$ for all $i$. First suppose that $\vv(b_0)
\neq \vv(e \sum_{j=1}^{m} b_je^{j-1})$. Then $\vv(b) = \vv(b_0) = 0$ and $\vv(\sum_{j=1}^{m} b_je^{j-1}) > 0$. Let $a \in \VF(\hat S)$ be such that $\vv(a) =
\vv(\sum_{j=1}^{m} b_je^{j-1})$. By the inductive hypothesis, $\vv(\sum_{j=1}^{m} f(b_j/a) d^{j-1}) = 0$ and hence $\vv(d \sum_{j=1}^{m} f(b_j) d^{j-1}) > 0$. So $\vv(b^*) = \vv(f(b_0)) = 0$ and $\rv(b^*) = \rv(f(b_0)) \in \K(f(\hat S))[\rv(d)]$.

Next suppose that $\vv(b_0) = \vv(e \sum_{j=1}^{m} b_j e^{j-1}) < 0$. Then, since $\vv(b / b_0) > 0$, we have $\rv(e) \rv ( \sum_{j=1}^{m} b_j e^{j-1}/b_0 ) + 1 = 0$. By the inductive hypothesis,
\[
\rv \biggl( \sum_{j=1}^{m} b_j e^{j-1}/b_0 \biggr) \in \K(\hat S)[\rv(e)].
\]
So the equality implies that $\rv(e)$ is algebraic over $\K(\hat S)$, contradiction.

Finally suppose that $\vv(b_0) = \vv(e \sum_{j=1}^{m} b_j e^{j-1}) = 0$.
In this case, by the inductive hypothesis, we have
\begin{gather*}
\rv(b) = \rv(e)\rv \biggl(\sum_{j=1}^{m} b_je^{j-1} \biggr ) + \rv(b_0) \in \K(\hat S)[\rv(e)]\\
\vv \biggl( \sum_{j=1}^{m} f(b_j) d^{j-1} \biggr) = 0, \quad
\rv \biggl(\sum_{j=1}^{m} f(b_j) d^{j-1} \biggr) \in \K(f(\hat S))[\rv(d)].
\end{gather*}
If $\vv(b^*) > 0$ then $\rv(d) \rv (\sum_{j=1}^{m} f(b_j) d^{j-1} ) + \rv(f(b_0)) = 0$ and hence $\rv(d)$ is algebraic over $\K(f(\hat S))$, contradiction. So
$\vv(b^*) = 0$ and
\[
\rv(b^*) = \rv(d) \rv \biggl(\sum_{j=1}^{m} f(b_j) d^{j-1} \biggr) +
\rv(f(b_0)) \in \K(f(\hat S))[\rv(d)],
\]
as required.
\end{proof}

Note that, symmetrically, the claim still holds if $b$ and $b^*$ are interchanged. It follows that the embedding of the field $\VF(\la \hat S, e \ra)$ into the field $\VF(N)$ determined by $e \efun d$ induces a valued field embedding $f^* : \la \hat S, e \ra \fun N$ that extends the valued field embedding $f$, which, again by Remark~\ref{L:v:and:L:RV}, may be naturally converted into an $\lan{RV}$-monomorphism. Clearly we also have $\RV(\la \hat S, e \ra) = \la \RV(\hat S), \rv(e) \ra$.
\end{proof}

\begin{lem}\label{e:transc}
Suppose that $e$ is transcendental over $\VF(\hat S)$ and $\vv(e)$ is
of infinite order modulo $\Gamma(\hat S)$. For any $b = \sum_{0
\leq i \leq m} b_ie^i$ with $b_i \in \VF(\hat S)$,
if $b \neq 0$ then $\vv(b) = \min \set{\vv(b_ie^i) : 0 \leq i \leq
m}$. Also, $\Gamma(\la \hat S, e \ra)$ is the direct sum of $\Gamma(\hat S)$
and the cyclic group generated by $\vv(e)$: $\Gamma(\la \hat S, e \ra) =
\Gamma(\hat S) \oplus (\Z \cdot \vv(e))$.
\end{lem}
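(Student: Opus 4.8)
The plan is to exploit the fact that the monomials $b_ie^i$ have pairwise distinct valuations, so that the ultrametric inequality becomes an equality. First I would record the elementary observation that if $0 \le i < j \le m$ and both $b_i, b_j$ are nonzero, then $\vv(b_ie^i) \ne \vv(b_je^j)$: otherwise $(j - i)\vv(e) = \vv(b_i) - \vv(b_j) \in \Gamma(\hat S)$ with $j - i$ a nonzero integer, contradicting that $\vv(e)$ has infinite order modulo $\Gamma(\hat S)$. Consequently, among the finitely many nonzero terms of $b = \sum_{0 \le i \le m} b_ie^i$ there is a \emph{unique} one of least valuation, and an easy induction on the number of nonzero terms, using the strict ultrametric law (if $\vv(x) < \vv(y)$ then $\vv(x + y) = \vv(x)$), gives $\vv(b) = \min \set{\vv(b_ie^i) : 0 \le i \le m}$. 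This is the first assertion.

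Next I would pass to the field $\VF(\la \hat S, e \ra)$. Since $e$ is transcendental over $\VF(\hat S)$ and $\VF(\hat S) = \VF(S)$ is a field, this field is exactly the rational function field $\VF(\hat S)(e)$, so every nonzero element of it is a quotient $P(e)/Q(e)$ with $P, Q \in \VF(\hat S)[X]$ and $Q \ne 0$. Applying the first assertion to $P(e)$ and $Q(e)$ and using that $\vv$ is a valuation,
\[
\vv\!\left(\frac{P(e)}{Q(e)}\right) = \vv(P(e)) - \vv(Q(e)) \in \Gamma(\hat S) + \Z \cdot \vv(e);
\]
conversely every $\gamma + k\vv(e)$ with $\gamma \in \Gamma(\hat S)$ and $k \in \Z$ is realized (take $\vv(c\,e^k)$ for a suitable $c \in \VF(\hat S)$ and, for $k < 0$, pass to the reciprocal). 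Hence $\Gamma(\la \hat S, e \ra) = \Gamma(\hat S) + \Z \cdot \vv(e)$. Finally, the sum is direct: if $\gamma + k\vv(e) = 0$ with $\gamma \in \Gamma(\hat S)$ and $k \in \Z$, then $k\vv(e) = -\gamma \in \Gamma(\hat S)$, so the infinite-order hypothesis forces $k = 0$ and then $\gamma = 0$. This yields $\Gamma(\la \hat S, e \ra) = \Gamma(\hat S) \oplus (\Z \cdot \vv(e))$.

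I do not expect any genuine obstacle in this lemma; the one point that deserves care is that the ultrametric inequality must be used as an \emph{equality}, and this is exactly where the distinctness of the values $\vv(b_ie^i)$ — hence the infinite-order hypothesis on $\vv(e)$ — enters. (Dropping that hypothesis breaks the lemma: if, say, $\vv(e) = 0$, cancellation can occur in the residue field and $\vv(b)$ may strictly exceed the minimum.)
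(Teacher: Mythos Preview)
Your argument is correct and is the standard one: the infinite-order hypothesis forces the nonzero monomials $b_ie^i$ to have pairwise distinct values, so the ultrametric inequality is an equality, and the description of $\Gamma(\la \hat S, e\ra)$ follows immediately. The paper does not actually give a proof of this lemma but simply cites \cite[Lemma~4.8]{PrRo84}; your write-up supplies exactly the elementary details that reference contains, so there is nothing to compare beyond noting that you have filled in what the paper leaves to the literature.
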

\begin{proof}
This is well-known; see, for example, \cite[Lemma~4.8]{PrRo84}.
\end{proof}

\begin{lem}\label{lift:value:full}
If $\K(\hat S) = \K(M)$ and $\Gamma(\hat S)$ is divisible, then Claim~$(\star)$ holds.
\end{lem}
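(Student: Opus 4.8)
The plan is to show that, under the two hypotheses, the element $e$ is forced into the ``transcendental, with $\vv(e)$ of infinite order modulo $\Gamma(\hat S)$'' situation already handled by Lemma~\ref{e:transc}, and then to build $f^*$ directly, exactly as in the proofs of Lemmas~\ref{lift:all:root}--\ref{lift:transcendental}. So the real content of this lemma is the reduction, not a new construction.

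First I would verify that necessarily $\vv(e) \notin \Gamma(\hat S)$. Suppose not; since $\rv(e) \neq \infty$ we have $e \neq 0$, and as $\vv(e) \in \Gamma(\hat S) = \vv(\VF(\hat S)^{\times})$ we may pick $a \in \VF(\hat S)^{\times}$ with $\vv(a) = \vv(e)$. Then $\vrv(\rv(e/a)) = \vv(e) - \vv(a) = 0$, so $\rv(e/a) \in \K^{\times}(M)$; but $\K^{\times}(M) = \K^{\times}(\hat S)$ by hypothesis and $\rv(a) \in \RV(\hat S)$, hence $\rv(e) = \rv(e/a)\,\rv(a)$ lies in the group $\RV^{\times}(\hat S) \sub \RV(\hat S)$, contradicting the choice of $e$. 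This is exactly where $\K(\hat S) = \K(M)$ is used. Next, since $\Gamma(M)$ is torsion-free and $\Gamma(\hat S)$ is divisible, $\vv(e)$ has infinite order modulo $\Gamma(\hat S)$: from $n\vv(e) \in \Gamma(\hat S)$ with $n \geq 1$ we get $\delta \in \Gamma(\hat S)$ with $n\delta = n\vv(e)$, so $\vv(e) = \delta \in \Gamma(\hat S)$, a contradiction. In particular $e$ is transcendental over $\VF(\hat S)$, for an algebraic element would generate a finite extension and so, by the fundamental inequality, have value of finite order modulo $\Gamma(\hat S)$.

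Now Lemma~\ref{e:transc} applies to $e$: every nonzero $b = \sum_{0 \le i \le m} b_i e^i$ with $b_i \in \VF(\hat S)$ satisfies $\vv(b) = \min_i \vv(b_i e^i)$, and the minimum is attained at a \emph{unique} index $j$, because $\vv(b_i e^i) = \vv(b_i) + i\,\vv(e)$ and an equality for $i \neq i'$ would put $(i - i')\vv(e)$ in $\Gamma(\hat S)$. Hence $\rv(b) = \rv(b_j)\rv(e)^j$, and since every element of $\VF(\la \hat S, e\ra)$ is a quotient of two such polynomial expressions, $\RV(\la \hat S, e\ra) = \la \RV(\hat S), \rv(e)\ra$; this is contained in $\RV(S)$ because $\RV(\hat S) \sub \RV(S)$ and $\rv(e) \in \RV(S)$. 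For the embedding, choose $d \in \VF(N)^{\times}$ with $\rv(d) = f(\rv(e))$, using surjectivity of $\rv$; then $\vv(d) = f(\vv(e))$ again has infinite order modulo $\Gamma(f(\hat S))$, so $d$ is transcendental over $\VF(f(\hat S)) = f(\VF(\hat S))$ by the argument just given. The field isomorphism $\VF(\la \hat S, e\ra) \fun \VF(\la f(\hat S), d\ra)$ sending $e \mapsto d$ and extending $f$ on $\VF(\hat S)$ is valuation-preserving: for $b^{*} = \sum_i f(b_i)d^i$, Lemma~\ref{e:transc} applied over $f(\hat S)$ gives $\vv(b^{*}) = \min_i \vv(f(b_i)d^i) = f\bigl(\min_i \vv(b_i e^i)\bigr) = f(\vv(b))$, using that $f$ is an order isomorphism on value groups matching $\vv$ on monomials in $e$ and $d$; the case of quotients follows by multiplicativity. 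Since $\la \hat S, e\ra$ and $\la f(\hat S), d\ra$ are $\VF$-generated, Remark~\ref{L:v:and:L:RV} converts this into an $\lan{RV}$-monomorphism $f^{*} : \la \hat S, e\ra \fun N$ extending $f \rest \hat S$, with $f^{*}(\rv(e)) = \rv(f^{*}(e)) = \rv(d) = f(\rv(e))$. This is Claim~$(\star)$.

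The only step demanding real care is the first, namely deducing $\vv(e) \notin \Gamma(\hat S)$ from $\rv(e) \notin \RV(\hat S)$ together with $\K(\hat S) = \K(M)$; after that, everything is a routine invocation of Lemma~\ref{e:transc} and Remark~\ref{L:v:and:L:RV}. Combined with the earlier lemmas, this completes the verification of Claim~$(\star)$ in all the cases that arise, since the hypotheses here cover precisely the instances not reached by Lemmas~\ref{lift:all:root}--\ref{lift:transcendental}.
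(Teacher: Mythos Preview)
Your proof is correct and follows essentially the same approach as the paper: reduce to the situation of Lemma~\ref{e:transc}, pick $d \in \VF(N)$ with $\rv(d) = f(\rv(e))$, and invoke Remark~\ref{L:v:and:L:RV}. You spell out explicitly why $\vv(e) \notin \Gamma(\hat S)$ (using $\K(\hat S) = \K(M)$) and why $\RV(\la \hat S, e\ra) = \la \RV(\hat S), \rv(e)\ra$ via the unique-minimum computation, both of which the paper leaves implicit, but the argument is the same.
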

\begin{proof}
Since $\Gamma(\hat S)$ is divisible, clearly $\vv(e)$ is of infinite order
modulo $\Gamma(\hat S)$ and hence $e$ is transcendental over $\VF(\hat S)$.
Choose a $d \in \VF(N)$ with $\rv(d) = f(\rv(e))$. Then $d$ is
transcendental over $f(\VF(\hat S))$. As above, by
Lemma~\ref{e:transc} and Remark~\ref{L:v:and:L:RV}, the embedding of the field $\VF(\la \hat S, e \ra)$
into the field $\VF(N)$ determined by $e \efun d$ induces an $\lan{RV}$-monomorphism $f^* : \la \hat S, e \ra \fun N$ that extends $f$. Moreover, since $\K(\la \hat S, e \ra) = \K(M)$ and $\Gamma(\la \hat S, e \ra) = \la \Gamma(\hat S), \vv(e) \ra$, we clearly have $\RV(\la \hat S, e \ra) = \la \RV(\hat S), \rv(e) \ra$.
\end{proof}

\begin{prop}\label{lrv:mono:lift}
There is a monomorphism $f^* : M \fun N$ extending $f$.
\end{prop}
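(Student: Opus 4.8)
The plan is to extend the monomorphism $f : S \fun N$ step by step until its domain is all of $M$, using a Zorn's lemma argument and the work already done. First I would invoke the earlier lemma to reduce to the case $\K(S) = \K(M)$ and $\Gamma(S)$ divisible. Now consider the poset of all monomorphisms $g : S' \fun N$ extending $f$ with $S \sub S' \sub M$; a routine chain argument shows it has a maximal element, which (after relabelling) I may assume is $f$ itself. So it suffices to show: if $f : S \fun N$ is a monomorphism with $S \subsetneq M$, then $f$ extends to a strictly larger substructure.

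The case analysis proceeds by first enlarging $\VF(S)$ and then enlarging $\RV(S)$. Suppose first there is an element $e \in \VF(M) \mi \VF(S)$ with $\rv(e) \in \RV(S)$. Passing to $\hat S = \seq{\VF(S)}$ and then building up to $\VF(M)$, I would choose $e$ suitably — either algebraic over $\VF(\hat S)$ (so as to make $(\VF(\hat S), \OO(\hat S))$ closer to henselian, eventually henselian after taking a henselization inside $M$, which is possible since $M \models \ACVF$ is henselian) or transcendental with $\vv(e)$ of infinite order modulo $\Gamma(\hat S)$ — and apply Lemma~\ref{lift:all:root}, Lemma~\ref{lift:divisible:hull}, Lemma~\ref{lift:transcendental}, or Lemma~\ref{lift:value:full} as appropriate to extend $f$ over $\seq{\hat S, e}$. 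Here Claim~$(\star)$ in each lemma guarantees no new $\RV$-elements are created that lie outside $\seq{\RV(\hat S), \rv(e)}$, so the extension stays controlled. Iterating, I obtain an extension whose $\VF$-sort is all of $\VF(M)$; then $\hat S := \seq{\VF(M)}$ satisfies $\K(\hat S) = \K(M)$ (it is algebraically closed, contained in $\K(M)$, and contains $\rv(e)$ for all $e$) and $\Gamma(\hat S)$ is divisible. In the remaining case, $\VF(S)$ is already all of $\VF(M)$ but $\RV(S) \subsetneq \RV(M) = \rv(\VF(M))$ — but this cannot happen, since every $t \in \RV(M)$ equals $\rv(a)$ for some $a \in \VF(M) = \VF(S)$, so $t \in \RV(S)$. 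Hence once $\VF(S) = \VF(M)$ we automatically have $S = M$, and we are done.

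The main obstacle I anticipate is organizing the transfinite induction on $\VF$-generators so that the hypotheses of the four lifting lemmas are actually met — in particular, arranging that $(\VF(\hat S), \OO(\hat S))$ is henselian before applying Lemma~\ref{lift:all:root} or Lemma~\ref{lift:divisible:hull}. The clean way is: first add a relative algebraic closure of $\VF(S)$ inside $\VF(M)$ (which is automatically henselian, being algebraically closed, and to which $f$ extends by Remark~\ref{L:v:and:L:RV} together with saturation of $N$ via Theorem~\ref{weis:qe}), handling along the way the $\RV$-bookkeeping via the algebraic lemmas; then the remaining generators are transcendental and one alternates between Lemma~\ref{lift:transcendental} (new residue-field elements) and Lemma~\ref{lift:value:full} or Lemma~\ref{e:transc}-based steps (new value-group elements), using the fact that at each stage $\K(\hat S) = \K(M)$ and $\Gamma(\hat S)$ divisible are preserved. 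Once this scaffolding is in place, each individual extension step is exactly one of the lemmas already proved, and the proposition follows.
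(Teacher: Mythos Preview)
Your approach is essentially the paper's: extend $f \rest \hat S$ step by step via the four lifting lemmas, maintaining compatibility with $f$ on $\RV(S)$ through Claim~$(\star)$, until the domain becomes $\VF$-generated and Remark~\ref{L:v:and:L:RV} finishes the job. A few details need correction, however.

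First, henselization is not achieved by ``choosing $e$ algebraic'' and applying the lemmas; rather, one passes directly to the henselization $\hat S^h$ of $\hat S$ inside $M$, which is an \emph{immediate} extension, so $\RV(\hat S^h) = \RV(\hat S)$ and no $\RV$-bookkeeping is required at all --- this is exactly how the paper proceeds, and it is what makes Lemmas~\ref{lift:all:root} and~\ref{lift:divisible:hull} available. Second, your claim that ``$\Gamma(\hat S)$ divisible is preserved'' fails after an application of Lemma~\ref{lift:value:full}: by Lemma~\ref{e:transc} one has $\Gamma(\la \hat S, e\ra) = \Gamma(\hat S) \oplus \Z \cdot \vv(e)$, which is not divisible, so one must re-henselize and reapply Lemma~\ref{lift:divisible:hull} before the next use of Lemma~\ref{lift:value:full} (the paper's phrase ``a combined application of henselization, Lemma~\ref{lift:divisible:hull}, and Lemma~\ref{lift:value:full}'' is precisely this loop). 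Similarly, after Lemma~\ref{lift:transcendental} the new $\VF$-part is a purely transcendental extension and hence no longer henselian, so re-henselization is needed there too. Third, invoking Remark~\ref{L:v:and:L:RV} to add the full algebraic closure of $\VF(S)$ in one stroke is not sound: that remark produces an extension of $f \rest \hat S$ as an $\lan{v}$-map with no control over compatibility with $f$ on $\RV(S) \mi \RV(\hat S)$; the control comes precisely from building up one generator at a time via Lemmas~\ref{lift:all:root} and~\ref{lift:divisible:hull}, each of which secures Claim~$(\star)$.

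With these fixes --- interleaving henselization with the four lemmas, as the paper's ``iterating these procedures'' indicates --- your outline becomes the paper's proof. The Zorn's-lemma framing is harmless but superfluous, since you end up carrying out the explicit transfinite construction anyway.
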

\begin{proof}
First of all, since the henselization $\hat S^h$ of $\hat S$ in $M$ is an immediate extension (in the sense of valuation theory), we have $\RV(\la \hat S^h, \hat S \ra) = \RV(\hat S)$. So we may assume that $(\VF(\hat S), \OO(\hat S))$ is henselian. Now we use Lemma~\ref{lift:all:root} to extend $f
\rest \hat S$ to $f_1 : \hat S_1 \fun N$ by adding all the elements in $\K(M)$ that are algebraic over $\K(\hat S)$. Manifestly $\K(\hat S_1)$ is algebraically closed. Then, starting with the least $n$ such that there is a $\gamma \in \Gamma(\hat S_1)$ that is not divisible by $n$, we use Lemma~\ref{lift:divisible:hull} to extend $f_1$ to $f_2 : \hat S_2 \fun N$ such that $\Gamma(\hat S_2)$ is divisible. Note that, by
the proof of Lemma~\ref{lift:divisible:hull}, $\K(\hat S_2) = \K(\hat S_1)$.
Next, we use Lemma~\ref{lift:transcendental} to extend $f_2$ to $f_3 : \hat S_3 \fun N$ by adding an element in $\K(M)$ that is transcendental over $\K(\hat S_2)$. Iterating these procedures we may exhaust all elements in $\K(M)$ and obtain a monomorphism $f_4 : \hat S_4 \fun N$ such that $\hat S_4$ satisfies the assumption of Lemma~\ref{lift:value:full}. Then, a combined application of
henzelization, Lemma~\ref{lift:divisible:hull}, and
Lemma~\ref{lift:value:full} eventually brings a monomorphism $f_5 : \hat S_5 \fun N$ extending $f$ such that $\hat S_5$ is $\VF$-generated. Now the proposition follows from Remark~\ref{L:v:and:L:RV}.
\end{proof}

This proposition and Shoenfield's test immediately yield:

\begin{thm}\label{qe:acvf}
The theory $\ACVF$ admits quantifier elimination.
\end{thm}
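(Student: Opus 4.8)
The plan is straightforward: Theorem~\ref{qe:acvf} is an immediate consequence of Proposition~\ref{lrv:mono:lift} via Shoenfield's test~\cite{Shoen71}. Recall that Shoenfield's test says that a theory $T$ admits quantifier elimination provided that for any two models $M, N \models T$, any substructure $S \sub M$, and any monomorphism $f : S \fun N$, after replacing $N$ by a sufficiently saturated elementary extension, $f$ extends to a monomorphism $M \fun N$. All the work has already been done in establishing Proposition~\ref{lrv:mono:lift}, where exactly such an extension is produced under the running hypothesis that $N$ is $\norm{M}^+$-saturated.

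So the proof would read: Let $M, N \models \ACVF$, let $S \sub M$ be a substructure, and let $f : S \fun N$ be an $\lan{RV}$-monomorphism. Passing to a $\norm{M}^+$-saturated elementary extension of $N$ if necessary (which does not affect whether $f$ is a monomorphism into $N$, nor the conclusion we want, since an elementary extension of a model of $\ACVF$ is again a model of $\ACVF$), we are in the situation set up before Proposition~\ref{lrv:mono:lift}. By that proposition, $f$ extends to a monomorphism $f^* : M \fun N$. Since this holds for every such triple $(M, S, f)$, Shoenfield's test applies and $\ACVF$ admits quantifier elimination.

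Since all the genuine content resides in Proposition~\ref{lrv:mono:lift} and its supporting lemmas, there is no real obstacle remaining here — the only thing to be careful about is the bookkeeping around saturation, namely noting that one may freely replace $N$ by a saturated elementary extension because $\ACVF$ is preserved under elementary extensions and because a monomorphism into $N$ remains one into the extension. I would phrase the whole argument in two or three sentences, as the author evidently intends with the remark ``This proposition and Shoenfield's test immediately yield''.

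\begin{proof}
Let $M$, $N \models \ACVF$, let $S \sub M$ be a substructure, and let $f : S \fun N$ be an $\lan{RV}$-mono\-morphism. Replacing $N$ by a $\norm{M}^+$-saturated elementary extension if necessary --- which is still a model of $\ACVF$, and into which $f$ is still a monomorphism --- we are in the setting fixed before Proposition~\ref{lrv:mono:lift}. By that proposition, $f$ extends to a monomorphism $f^* : M \fun N$. Since this holds for all such $M$, $S$, and $f$, Shoenfield's test~\cite{Shoen71} shows that $\ACVF$ admits quantifier elimination.
\end{proof}
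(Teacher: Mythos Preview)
Your proposal is correct and matches the paper's own proof exactly: the paper simply states that the theorem follows immediately from Proposition~\ref{lrv:mono:lift} and Shoenfield's test, and you have spelled out the routine bookkeeping (passing to a saturated elementary extension of $N$) that this entails.
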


\begin{rem}
Converse QE holds in the following sense. Let $K$ be a valued field interpreted naturally as an $\lan{RV}$-structure. If $\Th(K)$ in $\lan{RV}$ admits QE then $K$ is algebraically closed. This follows easily from the argument in~\cite[Section~4]{MMV83}. To see it, as in~\cite{MMV83}, let $\lan{val}$ be Robinson's one-sorted language for valued fields. Observe that any $\lan{val}$-formula may be translated into an $\lan{RV}$-formula containing only $\VF$-sort parameters and any quantifier-free $\lan{RV}$-formula containing only $\VF$-sort parameters may be translated into a quantifier-free $\lan{val}$-formula. So $\Th(K)$ in $\lan{val}$ also admits QE.
\end{rem}

\subsection{QE in $\ACVF^{\dag}$ and $\ACVF^{\ddag}$}

Next we show that $\ACVF^{\dag}$ also admits QE. Let $M$, $N \models \ACVF^{\dag}$ such that $N$ is $\norm{M}^+$-saturated. Let $S$ be a substructure of $M$ and $f: S \fun N$ a monomorphism. Note that any substructure of a model of $\ACVF^{\dag}$ is $\VF$-generated.

\begin{lem}\label{ac:lift:acvfi}
Let $S^{ac} \sub M$ be the substructure generated by the field-theoretic algebraic closure of $\VF(S)$ in $M$. Then there is a monomorphism $f^* : S^{ac} \fun N$ extending $f$.
\end{lem}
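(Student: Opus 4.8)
The plan is to extend $f$ to the field-theoretic algebraic closure of $\VF(S)$ by repeatedly adjoining single algebraic elements, using the lemmas of the previous subsection to carry along the $\RV$-structure and invoking the section axiom only to pin down the image. Write $K = \VF(S)$ and let $K^{\mathrm{alg}}$ be its algebraic closure inside $\VF(M)$; since $N$ is $\norm{M}^+$-saturated it suffices, by a routine chain argument, to extend $f$ one algebraic element at a time and take a union. So fix $e \in K^{\mathrm{alg}} \mi K$ and let $\hat S = \seq{\VF(S)}$; our goal is to produce a monomorphism of $\seq{\hat S, e}$ into $N$ extending $f \rest \hat S$. After passing to the henselization (an immediate extension, so $\RV$ is unchanged, exactly as in the proof of Proposition~\ref{lrv:mono:lift}) we may assume $(\VF(\hat S), \OO(\hat S))$ is henselian.

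The main point is that for an element $e$ algebraic over $\VF(\hat S)$, Claim~$(\star)$ already follows from the earlier lemmas once we know which case we are in. Concretely: if $\vv(e) \in \Gamma(\hat S)$, multiply $e$ by a suitable element of $\VF(\hat S)$ to reduce to $e \in \UU(M)$; then $\rv(e)$ is algebraic over $\K(\hat S)$ (it cannot be transcendental, as $e$ is algebraic over $\VF(\hat S)$), and replacing $F$ by the minimal polynomial of $\rv(e)$ over $\K(\hat S)$ and lifting to a monic polynomial in $\OO(\hat S)[X]$ of the same degree with irreducible reduction, Lemma~\ref{lift:all:root} applies and gives $f^* : \seq{\hat S, e} \fun N$ with $f^*(\rv(e)) = f(\rv(e))$ and $\RV(\seq{\hat S, e}) = \seq{\RV(\hat S), \rv(e)}$. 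If instead $\vv(e) \notin \Gamma(\hat S)$, then since $e$ is algebraic over $\VF(\hat S)$ some power $\vv(e^n)$ lies in $\Gamma(\hat S)$; taking the least such $n$ and a root adjustment we land in the hypotheses of Lemma~\ref{lift:divisible:hull}, which again yields the desired $f^*$. Iterating over a transcendence-free enumeration of $K^{\mathrm{alg}}$, we obtain in the union a monomorphism $f^* : S^{ac} \fun N$ of the underlying $\lan{RV}$-structures extending $f$, with $\RV(S^{ac}) = \seq{\RV(S^{ac})}$ generated over $\RV(\hat S)$ by the $\rv$-values of the adjoined elements.

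It remains to check that $f^*$ respects the section symbol $\sn$, i.e. that $f^*$ is an $\lan{RV}^{\dag}$-monomorphism and not merely an $\lan{RV}$-one. Here we use that $\sn$ is a section of $\RV$ in the sense of the definition: $\sn \rest \RV^{\times}$ is a group homomorphism and $\sn(t) \in t$. Since $\sn$ is determined on all of $\RV(S^{ac})$ by its values on a generating set, and each new generator $\rv(e)$ has $\sn(\rv(e)) \in \rv^{-1}(\rv(e))$ a prescribed field element, we must verify that the element $d = f^*(e)$ (or the appropriate adjusted power) can be chosen to additionally satisfy $f^*(\sn(\rv(e))) = \sn^N(f(\rv(e)))$. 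In fact $\sn(\rv(e))$ is itself an element of $\VF(S^{ac})$ algebraic over $\VF(S)$, lying in the coset $\rv^{-1}(\rv(e))$; applying the same lifting lemmas to $\sn(\rv(e))$ in place of $e$ and using that in $N$ the element $\sn^N(f(\rv(e)))$ is the unique section value in $\rv^{-1}(f(\rv(e)))$ that makes $\sn^N$ a homomorphism, one sees the field embedding is forced to send $\sn(\rv(e)) \mapsto \sn^N(f(\rv(e)))$. The main obstacle is precisely this bookkeeping: one must enumerate the generators of $\VF(S^{ac})$ over $\VF(S)$ so that the section values $\sn(t)$ of previously-handled $t$'s are already in the substructure when $t$ is processed, so that compatibility with $\sn$ is inherited at each step rather than needing to be re-imposed. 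Once the enumeration is arranged this way, each single-step extension is an instance of Lemma~\ref{lift:all:root} or Lemma~\ref{lift:divisible:hull} applied to a polynomial over $\hat S$ whose $\RV$-companion is already split by $f$, and the union is the required $f^* : S^{ac} \fun N$.
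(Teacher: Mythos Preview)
Your approach has real gaps. First, the invocations of Lemma~\ref{lift:all:root} and Lemma~\ref{lift:divisible:hull} do not fit their hypotheses. Those lemmas were stated under the standing assumption that $\rv(e)\in\RV(S)\smallsetminus\RV(\hat S)$, so that $f(\rv(e))$ is already defined; here $S$ is $\VF$-generated, so $\hat S=S$ and that hypothesis is never met. Your workaround for the unit case (``replace $F$ by a monic lift of the minimal polynomial of $\rv(e)$'') produces a polynomial which need not have $e$ as a root, so Lemma~\ref{lift:all:root} yields an extension adjoining some other element, not $e$. In the non-unit case you only obtain $\vv(e^n)\in\Gamma(\hat S)$, whereas Lemma~\ref{lift:divisible:hull} needs $e^n\in\VF(\hat S)$; these are very different conditions. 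Finally, your $\sn$-compatibility step is not an argument: you write that the embedding is ``forced'' to send $\sn(\rv(e))$ to $\sn^N(f(\rv(e)))$, but $f(\rv(e))$ is undefined at that stage, and even once $f^*(\rv(e))$ is fixed, nothing in the lifting lemmas forces a particular choice among the several roots with that $\rv$-value. The ``bookkeeping'' you defer is exactly the content of the lemma.

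The paper avoids all of this by separating concerns. It passes to the $\lan{RV}$-reducts and applies Proposition~\ref{lrv:mono:lift} wholesale to obtain an $\lan{RV}$-monomorphism $\dot f^*:\dot S^{ac}\to\dot N$ in one stroke; the step-by-step machinery is already packaged there and need not be redone. Only afterwards does it check that $\dot f^*$ commutes with $\sn$, and for this it uses the concrete description of $\RV(S^{ac})$: every $t\in\RV(S^{ac})$ factors as a product of an element of $\K(S^{ac})$ (algebraic over $\K(S)$, so $\sn(t)$ satisfies the lifted polynomial with coefficients in $\sn(\K(S))\subset\VF(S)$) and an $n$th root of an element of $\RV(S)$ (so $\sn$ of it is an $n$th root of an element of $\sn(\RV(S))\subset\VF(S)$). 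In both cases $\sn(t)\in\VF(S^{ac})$ and $\dot f^*(\sn(t))=\sn(\dot f^*(t))$ follows directly from multiplicativity of $\sn$ and the fact that $\dot f^*$ agrees with $f$ on $S$. This is the argument you should give.
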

\begin{proof}
Let $\dot{S}$, $\dot M$, $\dot N$, $\dot f$, and $\dot S^{ac}$ be the $\lan{RV}$-reducts of $S$, $M$, $N$, $f$, and $S^{ac}$. From general valuation theory we have that $\K(\dot S^{ac})$ is the field-theoretic algebraic closure of $\K(\dot S)$ and $\Gamma(\dot S^{ac})$ is the divisible hull of $\Gamma(\dot S)$. By Proposition~\ref{lrv:mono:lift}, there is an $\lan{RV}$-monomorphism $\dot f^* : \dot S^{ac} \fun \dot N$ extending $\dot{f}$. Let
\[
P = \{ \sn(s) : s^n = t \text{ for some } t \in \RV(\dot S) \text{ and some natural number } n \}.
\]
Note that $P$ is the set of all $n$th roots of elements in $\sn(\RV(\dot S))$. Hence $P$ is a subset of $\VF(\dot S^{ac})$ and $\dot f^*(\sn(s)) = \sn(\dot f^*(s))$ if $\sn(s) \in P$. Let $t \in \RV(\dot S^{ac})$ and $a = \sn(t)$.
If $t \in \K(\dot S^{ac})$ then there is a polynomial $\lbar F(X) = \sum_{i=1}^{n} t_i X^i$ with $t_i \in \K(\dot S)$ such that $\lbar F(t) = 0$. Let $a_i = \sn(t_i)$ if $t_i \neq 0$, otherwise set $a_i = 0$. Clearly $\sum_{i=1}^{n} a_i a^i = 0$ and hence $a \in  \VF(\dot S^{ac})$. So $\dot f^*(a) = \sn(\dot f^*(t))$. If $t \notin \K(\dot S^{ac})$ then there is a $b \in \sn(\K(\dot S^{ac}))$ and a $c \in P$ such that $a = bc$. So $a \in  \VF(\dot S^{ac})$ and
\[
\dot f^*(a) = \dot f^*(b) \dot f^*(c) = \sn(\dot f^*(\rv(b))) \sn(\dot f^*(\rv(c))) = \sn(\dot f^*(\rv(a))).
\]
Therefore $\dot f^*$ induces an $\lan{RV}^{\dag}$-monomorphism.
\end{proof}

\begin{lem}\label{lift:any:sec}
For any $e \in \sn(\RV(M)) \mi S$ there is a monomorphism $f^* : \la S, e\ra \fun N$ extending $f$.
\end{lem}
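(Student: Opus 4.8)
The plan is to run the same case analysis as in the proof of Proposition~\ref{lrv:mono:lift}, but tracking the section $\sn$ throughout. First I would apply Lemma~\ref{ac:lift:acvfi} to pass from $S$ to the substructure $S^{ac}$ generated by the field-theoretic algebraic closure of $\VF(S)$ in $M$, together with a monomorphism $S^{ac}\to N$ extending $f$; since $\la S,e\ra\sub\la S^{ac},e\ra$ it then suffices to extend across $e$ starting from $S^{ac}$, so we may assume $\VF(S)$ is algebraically closed, hence $\K(S)$ is algebraically closed and $\Gamma(S)$ is divisible. Put $t=\rv(e)$, so $e=\sn(t)$. If $e\in\VF(S)$ we are already done (the extension is just a restriction of $f$), so assume $e\notin\VF(S)$; then $t\notin\RV(S)$, for otherwise $\sn(t)\in\VF(S)$ since substructures of models of $\ACVF^{\dag}$ are closed under $\sn$, and moreover $e$ is transcendental over $\VF(S)$ because $\VF(S)$ is algebraically closed.

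Next I would split on $\gamma:=\vrv(t)$. If $\gamma\notin\Gamma(S)$, then $\gamma$ has infinite order modulo $\Gamma(S)$ (as $\Gamma(S)$ is divisible), so by Lemma~\ref{e:transc} the valuation on $\VF(S)(e)$ is computed termwise and $\Gamma(\la S,e\ra)=\Gamma(S)\oplus(\Z\cdot\gamma)$; a short computation then gives $\K(\la S,e\ra)=\K(S)$ and $\RV(\la S,e\ra)=\la\RV(S),t\ra$. Using saturation, pick $t^{*}\in\RV(N)$ with $\vrv(t^{*})$ of infinite order modulo $\Gamma(f(S))$ and set $d=\sn(t^{*})$; then $d$ is transcendental over $f(\VF(S))$, and by Lemma~\ref{e:transc} again the field embedding $\VF(S)(e)\fun\VF(N)$ with $e\efun d$ is a valued field embedding, hence by Remark~\ref{L:v:and:L:RV} an $\lan{RV}$-monomorphism extending $f$ which sends $t$ to $t^{*}$. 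If instead $\gamma\in\Gamma(S)$, fix $a\in\VF(S)$ with $\vv(a)=\gamma$; then $s:=t\,\rv(a)^{-1}\in\K^{\times}(M)$, and $s\notin\K(S)$ (else $t\in\RV(S)$), so $s$ is transcendental over $\K(S)$. Since $\sn(s)=e\,\sn(\rv(a))^{-1}$ with $\sn(\rv(a))\in\VF(S)$, we have $\la S,e\ra=\la S,\sn(s)\ra$, so I may replace $e$ by $\sn(s)$ and assume $e\in\UU(M)$ with $\rv(e)$ transcendental over $\K(S)$; the argument in the proof of Lemma~\ref{lift:transcendental} then applies, provided the auxiliary element is taken to be $d=\sn(s^{*})$ for some $s^{*}\in\K^{\times}(N)$ transcendental over $\K(f(S))$ (which exists by saturation), yielding an $\lan{RV}$-monomorphism $f^{*}\colon\la S,e\ra\fun N$ extending $f$ with $\Gamma(\la S,e\ra)=\Gamma(S)$, $\K(\la S,e\ra)=\K(S)(\rv(e))$, and $\RV(\la S,e\ra)=\la\RV(S),\rv(e)\ra$.

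The point that will require real care is upgrading the $\lan{RV}$-monomorphism $f^{*}$ obtained above to an $\lan{RV}^{\dag}$-monomorphism: one must check that $\la S,e\ra$ is already closed under $\sn$, so that it is a substructure in $\lan{RV}^{\dag}$, and that $f^{*}(\sn(u))=\sn(f^{*}(u))$ for all $u\in\RV(\la S,e\ra)$. The enabling observation is that the axioms on a section force $\sn(\K^{\times})\cup\set{0}$ to be a field of representatives for the residue field: a subfield of $\OO$ meets $\MM$ only in $0$, so $\rv$ restricts to a field isomorphism $\sn(\K^{\times})\cup\set{0}\to\K$ whose inverse is $\sn\rest\K$; in particular $\sn$ carries a rational function in $\rv(e)$ over $\K(S)$ to the corresponding rational function in $e$ over $\sn(\K(S))\sub\VF(S)$, and $\sn(t^{n}r)=e^{n}\sn(r)$ for $r\in\RV^{\times}(S)$. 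Writing an arbitrary $u\in\RV^{\times}(\la S,e\ra)$ as $\rv(a_{0})\cdot u'$ with $a_{0}\in\VF(S)$, $\vv(a_{0})=\vrv(u)$, and $u'$ in the residue field, one gets $\sn(u)=\sn(\rv(a_{0}))\cdot\sn(u')$ with both factors visibly in $\VF(\la S,e\ra)$, and the desired commutation with $f^{*}$ reduces to the facts that $f$ is already an $\lan{RV}^{\dag}$-monomorphism and that $d$ was chosen as a $\sn$-value (so $f^{*}(e)=\sn(f^{*}(\rv(e)))$). I expect this bookkeeping — in particular keeping the $\Gamma$-part and the residue-field part of $\RV(\la S,e\ra)$ separate when analyzing $\sn$ — to be the main obstacle, though it requires no idea beyond the choice to take the images of $e$ and $t$ to be section values.
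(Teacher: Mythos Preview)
Your strategy is essentially the paper's --- reduce to $\VF(S)$ algebraically closed via Lemma~\ref{ac:lift:acvfi}, then split into a residue-field case (handled via the argument of Lemma~\ref{lift:transcendental}) and a value-group case (handled via Lemma~\ref{e:transc}), in each case choosing the target $d\in N$ to be a section value so that $\sn$-compatibility follows. Your reduction of the case $\gamma\in\Gamma(S)$ to $e\in\UU(M)$ and your $\sn$-bookkeeping are both sound.

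There is, however, a genuine gap in the case $\gamma\notin\Gamma(S)$. You pick $t^{*}\in\RV(N)$ merely with $\vrv(t^{*})$ of infinite order modulo $\Gamma(f(S))$ and then assert that $e\mapsto d=\sn(t^{*})$ gives a valued field embedding by Lemma~\ref{e:transc}. But Lemma~\ref{e:transc} only says that on each side the valuation of a polynomial is the minimum of its term valuations; it does not force the minimum to occur at the same index on both sides, and the induced group map $\Gamma(S)\oplus\Z\gamma\to\Gamma(f(S))\oplus\Z\gamma'$ (with $\gamma'=\vrv(t^{*})$) need not be order-preserving. Concretely, if the cuts differ there is $\beta\in\Gamma(S)$ with, say, $\beta<\gamma$ in $\Gamma(M)$ but the image of $\beta$ exceeding $\gamma'$ in $\Gamma(N)$; choosing $b_0,b_1\in\VF(S)$ with $\vv(b_0)=0$ and $\vv(b_1)=-\beta$ then gives $\vv(b_0+b_1e)=\min(0,\gamma-\beta)=0$ while $\vv(f(b_0)+f(b_1)d)=\min(0,\gamma'-\beta)<0$, so the map does not preserve $\OO$ and hence fails to respect $\leq$ on $\RV$. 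The fix is exactly what the paper does: use saturation to choose $t^{*}$ so that $\vrv(t^{*})$ realizes in $\Gamma(f(S))$ the \emph{same Dedekind cut} that $\gamma$ realizes in $\Gamma(S)$; infinite order then comes for free, and the term-by-term minima line up.
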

\begin{proof}
By Lemma~\ref{ac:lift:acvfi}, without loss of generality, we may assume that $\VF(S)$ is algebraically closed and hence $e$ is transcendental over $\VF(S)$. If $e \in \K(M)$ then we may apply Lemma~\ref{lift:transcendental} with $d = \sn(f(\rv(e)))$. Since $\RV(\la S, e \ra) = \la \RV(S), \rv(e) \ra$, the resulting map is evidently an $\lan{RV}^{\dag}$-monomorphism.

If $e \notin \K(M)$ then we choose a $t \in \RV(N)$ that makes the same Dedekind cut in $\Gamma(f(S))$ as $\rv(e)$ in $\Gamma(S)$. This is possible since $N$ is sufficiently saturated. Now we see that the proof of Lemma~\ref{lift:value:full} goes through with $d = \sn(t)$ and as above the resulting map is an $\lan{RV}^{\dag}$-monomorphism.
\end{proof}

\begin{thm}\label{qe:acvfi}
The theory $\ACVF^{\dag}$ admits quantifier elimination.
\end{thm}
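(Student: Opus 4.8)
The plan is to verify the hypotheses of Shoenfield's test exactly as for Theorem~\ref{qe:acvf}. Fix $M, N \models \ACVF^{\dag}$ with $N$ being $\norm{M}^+$-saturated, a substructure $S \sub M$, and a monomorphism $f : S \fun N$; we must extend $f$ to a monomorphism $M \fun N$. Here the ``$\rv(\VF(S)) \neq \RV(S)$'' phenomenon that forced the elaborate Claim~$(\star)$ analysis in the $\ACVF$ case cannot occur: $\sn$ is total on $\RV$ and $\rv(\sn(t)) = t$, so $\RV(S) = \rv(\VF(S))$ and every substructure is $\VF$-generated.

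First I would apply Lemma~\ref{ac:lift:acvfi} to arrange that $\VF(S)$ is algebraically closed. Then I would exhaust the image of the section: as long as $\RV(S) \neq \RV(M)$ --- equivalently, since $S$ is closed under $\sn$, as long as $\sn(\RV(M)) \not\sub \VF(S)$ --- pick $e \in \sn(\RV(M)) \mi S$ and use Lemma~\ref{lift:any:sec} (which itself reinstates algebraic closedness of the $\VF$-sort via Lemma~\ref{ac:lift:acvfi} and uses saturation of $N$) to extend to a monomorphism $\la S, e \ra \fun N$. Iterating, and taking unions of chains at limit stages, I obtain a substructure $S' \sub M$ and a monomorphism $f' : S' \fun N$ extending $f$ with $\RV(S') = \RV(M)$. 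Since $S'$ is closed under $\sn$, the section is then fully defined on $\RV(S')$ with image in $\VF(S')$, and $f'$ already commutes with $\sn$.

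It remains to extend $\VF(S')$ up to $\VF(M)$, and this is now purely a question about the $\lan{RV}$-reducts: because $\RV(S') = \RV(M)$, enlarging the $\VF$-sort of $S'$ within $M$ changes neither $\RV$ nor $\Gamma$ nor $\K$ nor the function $\sn$. Applying Proposition~\ref{lrv:mono:lift} to the $\lan{RV}$-reducts (where $N$ remains $\norm{M}^+$-saturated as a model of $\ACVF$) yields an $\lan{RV}$-monomorphism $g : M \fun N$ extending $f'$; and since the $\RV$-sort of its domain and the section there are those of $S'$, on which $f'$ already respected $\sn$, the map $g$ automatically commutes with $\sn$ and hence is an $\lan{RV}^{\dag}$-monomorphism $M \fun N$ extending $f$. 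Shoenfield's test now gives the theorem.

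All the real work sits in Lemmas~\ref{ac:lift:acvfi} and~\ref{lift:any:sec}; at this assembly stage the only thing to watch is the bookkeeping --- checking that repeatedly adjoining section elements actually drives $\RV(S)$ all the way up to $\RV(M)$ rather than stalling, and that, once it has, the terminal $\VF$-sort completion is genuinely $\RV$-neutral so that Proposition~\ref{lrv:mono:lift} transfers verbatim and carries $\sn$ along for free.
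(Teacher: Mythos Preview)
Your proof is correct and follows essentially the same route as the paper's: iterate Lemma~\ref{lift:any:sec} until $\sn(\RV(M)) \sub \VF(S')$ (equivalently $\RV(S') = \RV(M)$), then observe that any further $\lan{RV}$-extension automatically commutes with $\sn$, so Proposition~\ref{lrv:mono:lift} finishes. You have simply spelled out more of the bookkeeping---the $\VF$-generatedness of substructures, the transfinite iteration with unions at limits, and the verification that the final $\lan{RV}$-monomorphism respects $\sn$---that the paper compresses into one sentence.
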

\begin{proof}
By Lemma~\ref{lift:any:sec} there is a monomorphism $f_1 : S_1 \fun N$ extending  $f$ such that $\sn(\RV(M)) \sub \VF(S_1)$. At this point, any $\lan{RV}$-extension of $f_1$ is an $\lan{RV}^{\dag}$-extension of $f_1$. So QE follows from Proposition~\ref{lrv:mono:lift} and Shoenfield's test.
\end{proof}

It is easy to see that a simpler version of the proof of Theorem~\ref{qe:acvfi} works for $\ACVF^{\ddag}$ and hence we have:

\begin{thm}\label{qe:acvfii}
The theory $\ACVF^{\ddag}$ admits quantifier elimination.
\end{thm}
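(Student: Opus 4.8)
The plan is to run the proof of Theorem~\ref{qe:acvfi} with the section of $\RV$ replaced everywhere by the section of $\K$; this makes the argument strictly shorter, because the image of the section is now an algebraically closed subfield of the valuation ring. Concretely, recall that in any model of $\ACVF^{\ddag}$ the residue field $\K$ is an algebraically closed field, and that $\sn$ restricts to a field isomorphism from $\K$ onto the subfield $\sn(\K^{\times}) \cup \set{0}$ of $\OO$: it inverts $\rv$ on $\K^{\times}$, and $\rv$ is injective on $\sn(\K^{\times}) \cup \set{0}$ since every nonzero element of that subfield is a unit. As in Section~\ref{section:qe}, fix $M, N \models \ACVF^{\ddag}$ with $N$ being $\norm{M}^{+}$-saturated, a substructure $S \sub M$, and a monomorphism $f : S \fun N$; note that $\K^{\times}(S) = \rv(\sn(\K^{\times}(S)))$, so the residue field of $S$ is always realized inside $\VF(S)$.

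First I would prove the $\ACVF^{\ddag}$-analog of Lemma~\ref{ac:lift:acvfi}: if $S^{ac}$ is the substructure generated by $S$ together with the field-theoretic algebraic closure of $\VF(S)$ in $M$, then $\VF(S^{ac})$ is that algebraic closure and $f$ extends to a monomorphism $f^{*} : S^{ac} \fun N$. Passing to $\lan{RV}$-reducts, Proposition~\ref{lrv:mono:lift} gives an $\lan{RV}$-monomorphism $\dot f^{*} : \dot S^{ac} \fun \dot N$ extending $\dot f$; by general valuation theory $\K(\dot S^{ac})$ is the algebraic closure of $\K(\dot S)$, so $\sn(\K^{\times}(\dot S^{ac})) \cup \set{0}$, being an algebraically closed algebraic extension of $\sn(\K^{\times}(\dot S)) \cup \set{0}$ inside the algebraically closed field $\VF(\dot S^{ac})$, is exactly the algebraic closure of $\sn(\K^{\times}(\dot S)) \cup \set{0}$ there. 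Since $\dot f$ commutes with $\sn$ (it is the reduct of the $\lan{RV}^{\dag}$-monomorphism $f$), the field embedding $\dot f^{*}$ carries $\sn(\K^{\times}(\dot S)) \cup \set{0}$ into $\sn(\K^{\times}(\dot N)) \cup \set{0}$, hence carries its algebraic closure $\sn(\K^{\times}(\dot S^{ac})) \cup \set{0}$ into $\sn(\K^{\times}(\dot N)) \cup \set{0}$ as well. For $t \in \K^{\times}(\dot S^{ac})$ the elements $\dot f^{*}(\sn(t))$ and $\sn(\dot f^{*}(t))$ both lie in $\sn(\K^{\times}(\dot N)) \cup \set{0}$ and have the same image $\dot f^{*}(t)$ under $\rv$, so they are equal; thus $\dot f^{*}$ is an $\lan{RV}^{\dag}$-monomorphism. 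Here the $n$-th root bookkeeping of Lemma~\ref{ac:lift:acvfi}, needed there only because the section of $\RV$ is merely multiplicative, simply disappears.

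Next I would prove the analog of Lemma~\ref{lift:any:sec}: for any $e \in \sn(\K^{\times}(M)) \mi S$ the map $f$ extends to a monomorphism $\la S, e \ra \fun N$. By the previous step we may assume $\VF(S)$ is algebraically closed, so $e$ is transcendental over $\VF(S)$; since $e = \sn(\rv(e))$ with $\rv(e) \in \K^{\times}(M)$ the element $e$ is a unit, and $\rv(e) \notin \K(S)$ (otherwise $e = \sn(\rv(e)) \in \VF(S)$), so $\rv(e)$ is transcendental over the algebraically closed field $\K(S)$. Then Lemma~\ref{lift:transcendental}, applied with $d = \sn(f(\rv(e)))$, produces an $\lan{RV}$-monomorphism $f^{*}$ with $f^{*}(e) = d$ and $\RV(\la S, e \ra) = \la \RV(S), \rv(e) \ra$; since now $\K(\la S, e \ra) = \K(S)(\rv(e))$ and $\sn$ sends $\rv(e)$ to $e$ and $\K(S)$ into $\VF(S)$, every element of $\sn(\K^{\times}(\la S, e \ra))$ is a rational expression in $e$ over $\sn(\K^{\times}(S)) \cup \set{0} \sub \VF(S)$, and a direct comparison of coefficients shows $f^{*}$ commutes with $\sn$, so $f^{*}$ is an $\lan{RV}^{\dag}$-monomorphism. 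Here the second case of Lemma~\ref{lift:any:sec} --- section elements outside $\OO$ --- does not arise, since $\sn(\K^{\times}) \sub \UU$.

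Finally, iterating the last two steps transfinitely, I would extend $f$ to a monomorphism $f_{1} : S_{1} \fun N$ with $\sn(\K^{\times}(M)) \sub \VF(S_{1})$, so that $\K(S_{1}) = \rv(\sn(\K^{\times}(M))) \cup \set{0} = \K(M)$. At this point any $\lan{RV}$-monomorphism extending $f_{1}$ is automatically an $\lan{RV}^{\dag}$-monomorphism: on $\K^{\times}(M) = \K^{\times}(S_{1})$ the section is already realized inside $\VF(S_{1})$ and $f_{1}$ already commutes with it, while on $\RV(M) \mi \K^{\times}(M)$ the section is identically $0$ and the predicate $\K^{\times}$ is preserved. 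Hence Proposition~\ref{lrv:mono:lift}, applied to the $\lan{RV}$-reduct of $f_{1}$, yields an $\lan{RV}^{\dag}$-monomorphism $M \fun N$ extending $f$, and Shoenfield's test gives quantifier elimination. The only points requiring care are the two verifications that the $\lan{RV}$-monomorphisms furnished by Proposition~\ref{lrv:mono:lift} and by Lemma~\ref{lift:transcendental} respect $\sn$, and both rest on the single fact that $\rv$ is injective on $\sn(\K^{\times}) \cup \set{0}$ and that this set is closed under the field operations; everything else is formally identical to, and simpler than, the proof of Theorem~\ref{qe:acvfi}.
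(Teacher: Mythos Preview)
Your proposal is correct and is exactly what the paper has in mind: the paper's own ``proof'' is the single sentence that a simpler version of the proof of Theorem~\ref{qe:acvfi} works, and you have spelled out precisely that simpler version, correctly identifying the two simplifications (no $n$th-root bookkeeping in the analogue of Lemma~\ref{ac:lift:acvfi}, and no $e\notin\UU$ case in the analogue of Lemma~\ref{lift:any:sec}). The only minor looseness---writing $d=\sn(f(\rv(e)))$ when $\rv(e)$ is not literally in $\RV(S)$---is inherited verbatim from the paper's Lemma~\ref{lift:any:sec} and is easily repaired by first choosing the image of $\rv(e)$ in $\K^{\times}(N)$.
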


\section{Minimality in $\ACVF$}

In this section we shall establish \cmin-minimality (in a sense simpler than the original one in~\cite{macpherson:steinhorn:variants:ominimal, haskell:macpherson:1994}) for $\ACVF$ and $b$-minimality in the sense of~\cite{cluckers:loeser:bminimality} for $\ACVF(0,0)$ ($\ACVF$ of pure characteristic $0$).
The former follows quite easily from QE. The latter needs some analysis that needs \cmin-minimality.

\subsection{$C$-minimality and some basic structural properties}

Let $\gC$ be a sufficiently saturated model of $\ACVF$.
Fix a small substructure $S \sub \gC$ and let $\ACVF_S$ be the theory that extends $\ACVF$ with the atomic diagram of $S$. Hence $\ACVF_S$ is complete. We shall work in $\ACVF_S$. For notational simplicity we shall still refer to the language of $\ACVF_S$ as $\lan{RV}$. \emph{By a definable subset of $\gC$ we mean a $\0$-definable
subset in $\ACVF_S$}. If additional parameters are used in defining a subset then we shall spell them out explicitly if necessary.

\begin{defn}
Let $\lbar X$ be $\VF$-sort variables and $\lbar Y$ be $\RV$-sort variables.

A \emph{$\VF$-literal} is an $\lan{RV}$-formula of the form $F(\lbar X) \, \Box \, 0$, where $F(\lbar X)$ is a polynomial with coefficients in $\VF$, and $\Box$ is either $=$ or $\neq$.

A \emph{$\K$-term} is an $\lan{RV}$-term of the form $\sum_{i = 1}^k (\rv(F_{i}(\lbar X)) \cdot r_{i} \cdot \lbar Y^{n_i})$ with $k > 1$, where $F_{i}(\lbar X)$ is a polynomial with coefficients in $\VF$ and $r_{i} \in
\RV$. An \emph{$\RV$-literal} is an $\lan{RV}$-formula of the form
\[
\rv(F(\lbar X)) \cdot \lbar Y^m \cdot T(\lbar X, \lbar Y)
\, \Box \, \rv(G(\lbar X)) \cdot r \cdot \lbar Y^l \cdot S(\lbar X, \lbar Y),
\]
where $F(\lbar X)$, $G(\lbar X)$ are polynomials with coefficients in $\VF$, $T(\lbar X, \lbar Y)$, $S(\lbar X, \lbar Y)$ are $\K$-terms, $r \in \RV$, and $\Box$ is one of the symbols $=$, $\neq$,
$\leq$, and $>$.
\end{defn}

Note that if $T(\lbar X, \lbar Y)$ is a $\K$-term, $\lbar a \in \VF$, and $\lbar t \in \RV$ then $T(\lbar a, \lbar t)$ is defined if and only if each summand in $T(\lbar a, \lbar t)$ is either of value $1$ or is equal to $0$. Also, since the value of $\K$-terms are $0$, we may assume that they do not occur in $\RV$-sort inequalities.

Any $\lan{RV}$-formula with parameters is provably equivalent to a disjunction of conjunctions of $\VF$-literals and $\RV$-literals. This follows from Theorem~\ref{qe:acvf} and routine syntactical inductions.

\begin{thm}\label{c:min:acvf}
The theory $\ACVF$ is $C$-minimal.
\end{thm}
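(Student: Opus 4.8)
The plan is to deduce $C$-minimality directly from quantifier elimination (Theorem~\ref{qe:acvf}). By the remark immediately preceding the statement, every $\lan{RV}$-formula with parameters defining a subset of $\VF$ is provably equivalent to a finite disjunction of conjunctions of $\VF$-literals and $\RV$-literals. Since a boolean combination of balls is closed under finite unions and intersections, and since the class of boolean combinations of balls is itself closed under complementation, it suffices to show that each \emph{single} literal, viewed as a condition on one free $\VF$-variable $X$ (with all other variables instantiated by parameters), defines a boolean combination of balls in $\VF$.

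First I would dispose of the $\VF$-literals. Such a literal has the form $F(X) \, \Box \, 0$ with $F \in \VF[X]$ and $\Box \in \{=, \neq\}$. Since $\VF$ is algebraically closed, $F(X) = 0$ defines a finite set of points, each of which is a closed ball of radius $\infty$; its negation is a finite boolean combination of such balls, so both cases are fine. Next come the $\RV$-literals. After instantiating the $\RV$-sort parameters, a $\K$-term $T(X)$ in a single variable either takes value $1$ or is $0$ at a given point; the set where $T(X)$ is defined and equals a fixed $\RV$-element is cut out by conditions of the form $\vrv(\rv(F_i(X)) \cdot c_i) = 0$ together with polynomial equalities among the $\rv(F_i(X))$, and each of these, using that $\rv(F_i(X))$ depends only on which $\rv$-ball $X$ lies in, reduces to a condition on $\rv(X)$, i.e.\ to a boolean combination of $\rv$-balls. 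More importantly, the general $\RV$-literal, after clearing the $\K$-terms (which, having value $0$, may be assumed absent from inequalities as noted just before the theorem), reduces to a comparison of the form $\rv(F(X)) \cdot c \,\Box\, \rv(G(X)) \cdot c'$ with $\Box \in \{=, \neq, \leq, >\}$ and $c, c' \in \RV$ fixed. Writing $H(X) = F(X)/G(X)$ as a rational function with roots and poles $r_1, \dots, r_k \in \VF$ (available since $\VF$ is algebraically closed and since we may pass to the complement of the finite zero set of $G$), such a comparison becomes a condition on $\vv(H(X))$ and on $\rv(H(X))$; and the key geometric input is that for a single rational function $H$, the sets $\{X : \vv(H(X)) \Box \gamma\}$ and $\{X : \rv(H(X)) = s\}$ are boolean combinations of balls, because in each Swiss-cheese piece determined by the $\rv$-balls around the $r_i$ the function $\vv \circ H$ is affine in $\vv(X - r_i)$ for the dominant $i$.

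The cleanest way to organize the last point, and the step I expect to be the main obstacle, is the reduction of an $\RV$-literal in one $\VF$-variable to a statement about a \emph{single} polynomial or rational function: a priori the literal involves several polynomials $F, G, F_i$ simultaneously, and one must argue that their joint behaviour still carves out only balls. I would handle this by stratifying $\VF$ according to the finitely many $\rv$-balls around the (finitely many) roots of all the polynomials involved; inside each stratum every $\rv(F_i(X))$ is, up to a unit in $1 + \MM$, a monomial in the relevant $\vv(X - r_j)$, so that both sides of the literal become monomial expressions and the comparison is governed by a single linear inequality in $\vv(X)$ (equivalently, by membership in finitely many balls) together with an equality in the residue field, which again cuts out $\rv$-balls. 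Patching over the finitely many strata — each of which is a boolean combination of balls — and taking the finite disjunctions/conjunctions coming from QE, one concludes that every definable subset of $\VF(\gC)$ is a boolean combination of balls, which by definition is exactly $C$-minimality. Finally, since $\ACVF_S$ was an arbitrary completion of $\ACVF$ by the atomic diagram of a small substructure, and $C$-minimality of a theory is the assertion that all its models (equivalently, all completions) are $C$-minimal, this yields $C$-minimality of $\ACVF$ itself.
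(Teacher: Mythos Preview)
Your approach is correct in outline but genuinely different from the paper's, and it contains one false intermediate claim that you should fix.

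\textbf{Comparison with the paper.} The paper does not carry out the direct geometric analysis you propose. Instead, after introducing extra $\VF$-sort parameters (preimages under $\rv$ of the $\RV$-constants $r_i$), it observes that every $\K$-term in one $\VF$-variable collapses, across a finite disjunction of cases, to either $0$ or a single term $\rv(F(X))$; hence every $\RV$-literal reduces to the shape $\rv(F(X))\,\Box\,\rv(G(X))$ or $0\,\Box\,T(X)$. Such conditions are expressible in the two-sorted valued-field language $\lan{v}$ (for instance $\rv(F)=\rv(G)$ is $\vv(F-G)>\vv(F)$, and $\rv(F)\leq\rv(G)$ is $\vv(F)\leq\vv(G)$). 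At that point the paper simply invokes the known $C$-minimality result for ACVF in $\lan{v}$ from Macpherson--Steinhorn. So the paper's argument is a translation-and-citation; yours is a self-contained Swiss-cheese analysis. The paper's route is shorter and avoids reproving a known fact; yours has the virtue of being internal to $\lan{RV}$ and of making the geometry explicit, but it amounts to reproving the cited theorem.

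\textbf{The incorrect step.} You write that ``$\rv(F_i(X))$ depends only on which $\rv$-ball $X$ lies in,'' and conclude that the $\K$-term conditions cut out boolean combinations of $\rv$-balls. This is false: take $F(X)=X-1$; on the single $\rv$-ball $\rv^{-1}(1)$ the value $\rv(X-1)$ ranges over all of $\RV^{>1}\cup\{\infty\}$. What is true is that $\rv(F(X))$ is constant on each open ball that avoids the roots of $F$, which is exactly what your later stratification by balls around the roots exploits. So your second pass (the Swiss-cheese stratification by the finite root set of all polynomials involved) is the right mechanism, and your first pass should be deleted or rewritten to say ``balls around the roots'' rather than ``$\rv$-balls''. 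With that correction your outline goes through, though filling in the stratification carefully (showing that on each piece the literal becomes a single linear condition on some $\vv(X-r_j)$ together with a residue-field equality) is precisely the content of the Macpherson--Steinhorn result the paper cites.
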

\begin{proof}
Let $X$ be a $\VF$-sort variable and $\phi(X)$ a quantifier-free $\lan{RV}$-formula with parameters, where $X$ is the only variable in $\phi(X)$. By introducing more $\VF$-sort parameters, across a disjunction, any $\K$-term in $\phi(X)$ is reduced to either $0$ or the form $\rv(F(X))$. Note that in any $\RV$-literal, according to the syntax, if one side of $\Box$ is $0$ then the other side must be a $\K$-term and $\Box$ is either $=$ or $\neq$. Hence any $\RV$-literal in $\phi(X)$ is reduced to one of the following two forms: $0 \, \Box \, T(X)$ and $\rv(F(X)) \, \Box \, \rv(G(X))$. So the subset defined by $\phi(X)$ is also definable by an $\lan{v}$-formula and \cmin-minimality follows from~\cite[Theorem~4.11]{macpherson:steinhorn:variants:ominimal}.
\end{proof}

For any small subset $A \sub \gC$ let $\acl(A)$ be the model-theoretic algebraic closure of $A$ in $\gC$.

\begin{lem}\label{exchange}
The exchange principle holds in both sorts:
\begin{enumerate}
 \item For any $a$, $b \in \VF$, if $a \in \acl(b) \mi \acl(\0)$ then $b \in \acl(a)$.
 \item For any $t$, $s \in \RV$, if $t \in \acl(s) \mi \acl(\0)$ then $s \in \acl(t)$.
\end{enumerate}
\end{lem}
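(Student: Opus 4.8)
The plan is to prove each part of the exchange principle by exploiting the QE obtained in Theorem~\ref{qe:acvf} together with the algebraic and valuation-theoretic structure that it exposes. For part (1), suppose $a \in \acl(b) \mi \acl(\0)$. By QE, the fact that $a$ is algebraic over $b$ is witnessed by a quantifier-free $\lan{RV}$-formula $\phi(X,b)$ with finitely many solutions containing $a$; by Theorem~\ref{c:min:acvf} and the reduction of $\phi$ to a boolean combination of $\VF$-literals and $\RV$-literals in the single variable $X$, the isolated point $a$ must actually be cut out by a $\VF$-literal, i.e.\ $a$ is a root of a nonzero polynomial $F(X)$ with coefficients in $\VF(\la b \ra)$. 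Since such coefficients are rational functions of $b$ over $\VF(\acl(\0))$, and $a \notin \acl(\0)$ forces at least one coefficient to genuinely involve $b$, one recovers $b$ as algebraic over $a$ by the usual field-theoretic argument (clear denominators, view the defining relation symmetrically as a polynomial relation between $a$ and $b$ over $\acl(\0)$, and use that $b \in \acl(\0)$ would contradict $a \notin \acl(\0)$). The point is that model-theoretic $\acl$ in the $\VF$-sort coincides, up to $\acl(\0)$, with field-theoretic algebraic dependence, so part (1) reduces to exchange for transcendence degree in fields, which is classical.

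For part (2), the situation is more delicate because the $\RV$-sort is not a pure algebraically closed field: it fibers over the value group $\Gamma$ via $\vrv$, with each fiber a torsor over $\K^{\times}$. I would first dispose of the two ``pure'' cases. If $t, s$ both lie in $\K$, then $\acl$ restricted to $\K$ is, by the third axiom of $\ACVF$ and QE (or by stable embeddedness of the residue field, which follows from the $\RV$-literal analysis), just field-theoretic algebraic closure in the algebraically closed field $\K$, so exchange holds there. If instead $\vrv(t), \vrv(s)$ are the relevant data, then one is working inside the divisible ordered abelian group $\Gamma$, which is a pure structure with exchange (algebraic closure there is the divisible hull of the generated subgroup). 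The general case must interpolate between these: $s \in \acl(t)$ has to be extracted from the joint behavior of $\vrv(s)$ over $\dcl(\vrv(t))$ in $\Gamma$ and of the $\K^{\times}$-coordinate over the residue data.

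The cleanest route is to use the quotient sequence $\RV \fun \Gamma$ from the axioms and argue in two stages. Given $t \in \acl(s) \mi \acl(\0)$: first, $\vrv(t) \in \acl(\vrv(s)) = \dcl(\vrv(s))_{\mathrm{div}}$ inside $\Gamma$; if $\vrv(t) \notin \acl(\0)$ then $\vrv(t)$ is a nonzero rational multiple of $\vrv(s)$ modulo $\Gamma(\acl(\0))$, whence $\vrv(s) \in \acl(\vrv(t))$, and I claim this lifts to $s \in \acl(t)$ by choosing a section-free witness: any $s'$ with $\vrv(s')$ the prescribed value and lying in a fixed coset of $\K^{\times}$ definable over $t$ is one of finitely many conjugates. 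Second, if $\vrv(t) \in \acl(\0)$, then $t$ differs from an element of $\acl(\0)$ by a $\K^{\times}$-factor, so effectively $t$ is algebraic over $s$ \emph{within} a coset of $\K^{\times}$; translating by a fixed representative (available after naming one more $\RV$-parameter in $\acl(\0)$, or by working with the $\RV$-literals directly), this reduces to the residue-field case already handled, giving $s \in \acl(t)$ unless $\vrv(s) \notin \acl(\0)$, but then $\vrv(s) \in \acl(\vrv(t)) = \acl(\0)$, contradiction. The main obstacle will be making the lifting in the first stage precise: one must check that algebraicity of $t$ over $s$ in the combined sort really does distribute over the sequence $\RV \fun \Gamma$, i.e.\ that there is no ``mixed'' algebraic dependence that is invisible to both the $\Gamma$-projection and the $\K^{\times}$-fiber separately. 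I expect this to follow from the normal form for $\RV$-literals established just before Theorem~\ref{c:min:acvf} — each $\RV$-literal in one $\RV$-variable separates into its $\vrv$-part and its $\K$-part — together with the weak o-minimality of $\Gamma$ and strong minimality of $\K$, so that the finite set defining $t$ over $s$ projects to a finite set in $\Gamma$ and has finite fibers in $\K^{\times}$.
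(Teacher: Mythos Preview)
Your treatment of part~(1) is essentially the paper's: reduce via QE to a quantifier-free formula, observe that an isolated point of a definable subset of $\VF$ must be a zero of one of the polynomials occurring in the formula (otherwise all nearby points satisfy the same $\RV$-literals), and then invoke exchange for field-theoretic algebraic dependence.

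For part~(2) the paper takes a much shorter, purely syntactic route. Having QE, one looks at a quantifier-free $\phi(X,s)$ in the single $\RV$-variable $X$; since there are no $\VF$-variables, $\phi$ is a boolean combination of $\RV$-literals. Inequalities (which only see $\vrv$) and disequalities each define cofinite or infinite sets, so every irredundant disjunct must contain an $\RV$-sort \emph{equality}. Such an equality is a polynomial identity in the $\RV$-sort, and one invokes exchange ``in field theory'' directly on it; no case split on $\vrv(t)$ versus $\vrv(s)$ is needed.

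Your structural route through the sequence $\K^{\times} \hookrightarrow \RV^{\times} \twoheadrightarrow \Gamma$ is natural, but as written it has a real gap. In your second stage (the case $\vrv(t) \in \acl(\0)$) you assert that if $\vrv(s) \notin \acl(\0)$ then $\vrv(s) \in \acl(\vrv(t)) = \acl(\0)$, a contradiction; but nothing you have established gives $\vrv(s) \in \acl(\vrv(t))$ --- the hypothesis runs the other way. To close this case one must actually inspect the equality $\RV$-literal satisfied by $t$: the constraint that the $\K$-terms land in $\K$ forces every exponent of $s$ occurring there to vanish (else $\vrv(s)$ would be rationally dependent on $\acl(\0)$), so the literal no longer involves $s$ and hence $t \in \acl(\0)$, the desired contradiction. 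But that is exactly the syntactic analysis the paper performs up front, so your longer detour ends up needing the paper's key step anyway. The decomposition idea is not wrong, just circuitous; the paper's direct argument avoids the orthogonality and lifting issues you flag as obstacles.
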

\begin{proof}
For the first claim, let $\phi(X, b)$ be a quantifier-free formula in
disjunctive normal form that witnesses $a \in \acl(b)$. Let $F(X, b)$ be a polynomial occurring in $\phi(X, b)$. If $F(X, b) = 0$ then, since $a \notin \acl(\0)$, some coefficient of $F(X, b)$ is from $\seq{b} \mi \seq{\0}$ and hence the claim follows from the exchange principle in field theory. So suppose that $a$ is not a root of any $F(X, b)$. Then $\phi(X, b)$ contains no $\VF$-sort equalities. If $\rv(F(X, b))$ occurs in $\phi(X, b)$ then for any $d \in \VF$ with $\vv(d - a)$ sufficiently large we have $\rv(F(a, b)) = \rv(F(d, b))$. So we see that $\phi(X, b)$ does not define a finite subset, contradiction.

For the second claim, let $\phi(X, s)$ be a quantifier-free formula
in disjunctive normal form that witnesses $t \in \acl(s)$. Clearly
we may assume that $\phi(X, s)$ does not contain any $\VF$-sort literal.
So $\phi(X, s)$ only contains $\RV$-literals. It is easily seen that the
inequalities cannot define nonempty finite subset and neither can
the disequalities. Therefore every irredundant disjunct of $\phi(X, s)$
has an equality conjunct. Since $t \notin \acl(\0)$, the claim follows again from the exchange principle in field theory.
\end{proof}

\begin{lem}\label{function:dim:1:range:decom}
Let $A$, $B \sub \VF$ and $f : A \fun B$ a definable surjective function. Then there are definable disjoint subsets $B_1$, $B_2 \sub Y$ such that
\begin{enumerate}
  \item $B_1 \cup B_2 = B$ and $B_1$ is finite,
  \item $f^{-1}(b)$ is infinite for each $b \in B_1$,
  \item the function $f \rest f^{-1}(B_2)$ is finite-to-one.
\end{enumerate}
\end{lem}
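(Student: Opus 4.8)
The plan is to exploit $C$-minimality of $\ACVF$ (Theorem~\ref{c:min:acvf}) together with the exchange principle for the $\VF$-sort (Lemma~\ref{exchange}(1)). First I would set $B_1 = \set{b \in B : f^{-1}(b) \text{ is infinite}}$; this is a definable subset of $B \sub \VF$, so by $C$-minimality it is a finite boolean combination of balls, and I claim it must in fact be finite. Indeed, if $B_1$ were infinite, then being a boolean combination of balls it would contain a ball, hence an infinite definable set; pick $b \in B_1$ not in $\acl(\0)$ (possible since $\acl(\0)$ is finite-dimensional and $B_1$ contains a ball, hence a transcendental element over $\acl(\0)$ after adjusting for the parameters defining $f$). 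For such $b$, since $f^{-1}(b)$ is infinite, the same $C$-minimality argument applied to the definable set $f^{-1}(b) \sub \VF$ produces an element $a \in f^{-1}(b)$ with $a$ transcendental over $\acl(b)$, so in particular $a \notin \acl(b)$; but then $b = f(a) \in \dcl(a) \sub \acl(a)$ while $a \notin \acl(b)$, and since $a \notin \acl(\0)$ this contradicts the exchange principle (Lemma~\ref{exchange}(1)) — concretely, from $a \in \acl(b)$ being false and $b \in \acl(a)$ being true with $b \notin \acl(\0)$ we would get $a \in \acl(b)$, a contradiction. (I should state the argument symmetrically: if $b \notin \acl(\0)$ and $b \in \acl(a)$ then by exchange $a \in \acl(b)$, making $f^{-1}(b) \sub \acl(b)$ finite.)

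So $B_1$ is finite, and $f^{-1}(b)$ is infinite for each $b \in B_1$ by construction, giving clauses (1) (the finiteness of $B_1$) and (2). Then I would set $B_2 = B \mi B_1$, which is definable and disjoint from $B_1$ with $B_1 \cup B_2 = B$, completing clause (1). For clause (3), by the very definition of $B_1$, for every $b \in B_2$ the fiber $f^{-1}(b)$ is \emph{not} infinite, i.e. it is finite; hence $f \rest f^{-1}(B_2)$ is finite-to-one. That is all that is required, so no further work is needed for (3).

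The only real content is the first paragraph: showing that the locus of infinite fibers is finite. The subtlety I expect to be the main obstacle is the bookkeeping of parameters — $f$ is defined over some finite parameter set (beyond the implicit $S$), and "$\acl(\0)$" throughout must be read relative to that parameter set; the exchange principle in Lemma~\ref{exchange} is stated for $\acl(\0)$ in $\ACVF_S$, so I need $f$ to be $\0$-definable in the appropriate sense, or else to absorb its parameters into the base and apply exchange over that enlarged base (the exchange principle holds over any small base, by the same proof, so this is harmless). A second point to handle carefully: to extract a transcendental point from an infinite definable subset of $\VF$, I use that such a set, being a boolean combination of balls, contains an open ball, and an open ball over a small base always contains an element transcendental over (indeed generic over) that base by saturation of $\gC$; this is where $C$-minimality is genuinely used rather than just definability.
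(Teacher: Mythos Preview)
Your proof is correct and uses the same core idea as the paper: exchange (Lemma~\ref{exchange}) forces every $b$ with infinite fiber into $\acl(\0)$, whence there are only finitely many such $b$. One point you assert without argument is that $B_1 = \{b \in B : f^{-1}(b)\text{ is infinite}\}$ is definable; this does hold in $\ACVF$ (uniform finiteness for definable families in one $\VF$-variable follows from $C$-minimality plus compactness), but the paper sidesteps the question by reversing the order --- it first shows directly that each $b$ with infinite fiber lies in $\acl(\0)$ (picking $a \in f^{-1}(b) \mi \acl(b)$ by compactness and applying exchange), then uses compactness to trap all such $b$ inside a single finite definable set, and only afterwards trims down to the exact $B_1$. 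Your appeal to $C$-minimality to extract generic points is likewise more than is needed --- the paper just uses compactness (saturation) to find $a \in f^{-1}(b) \mi \acl(b)$ whenever $f^{-1}(b)$ is infinite --- but this is harmless.
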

\begin{proof}
For each $b \in B$, if $f^{-1}(b)$ is infinite then, by compactness, there is an $a \in f^{-1}(b)$ such that $a \notin \acl(b)$. Since $b \in \seq{a} \sub \acl(a)$, by Lemma~\ref{exchange}, we must have $b \notin \acl(a) \mi \acl(\0)$ and hence $b \in \acl(\0)$. By compactness again there is a definable finite subset $B_1$ such that if $f^{-1}(b)$ is infinite then $b \in B_1$. Clearly we may adjust $B_1$ so that it contains exactly those $b \in B$ with $f^{-1}(b)$ infinite. So $B_1$ and $B_2 = B \mi B_1$ are as desired.
\end{proof}

We now turn to the study of balls. Let $\ga$ be an open ball and $\gb$ a ball. The following properties are easy to see.
\begin{enumerate}
 \item For any $c \in \VF$, the subset $\ga - c = \set{a - c: a \in \ga}$
 is an open ball. If $c \in \ga$ then $\vcr(\ga - c) = \infty$
 and $\rad(\ga - c) = \rad(\ga)$ and $\ga - c$ is a union of $\rv$-balls. If $c \notin \ga$ and $\vv(c) \leq \rad(\ga)$ then $\vcr(\ga - c) \leq \rad(\ga -
 c) = \rad(\ga)$. If $c \notin \ga$ and $\vv(c) > \rad(\ga)$
 then $\ga - c = \ga$.

 \item $0 \notin \ga$ if and only if $\ga$ is contained in an $\rv$-ball
 if and only if $\vcr(\ga) \neq \infty$ if and only if
 $\rad(\ga) \geq \vcr(\ga)$.

 \item The average of any finite set of elements in $\ga$ is in $\ga$ if and only if $\cha(\K) = 0$.

 \item For any $c_1$, $c_2 \in \VF$, $(\ga - c_1) \cap (\ga - c_2) \neq
 \0$ if and only if $\ga - c_1 = \ga - c_2$ if and only if
 $\vv(c_1 - c_2) > \rad(\ga)$.

 \item If $\ga \cap \gb = \0$ then $\vv(a - b) = \vv(a' - b')$ for all $a$, $a' \in \ga$, $b$, $b' \in \gb$ and the subset $\ga - \gb = \set{a - b : a \in \ga \text{ and } b \in \gb}$ is a ball that does not contain $0$. In fact, for any $a \in \ga$ and $b \in \gb$, either $\ga - \gb = \ga - b$ or $\ga - \gb = a - \gb$.

 \item Suppose $\ga \cap \gb = \0$. Let $\gc$ be the smallest closed ball that contains $\ga$. Clearly $\vcr(\gc) = \vcr(\ga)$ and $\rad(\gc) = \rad(\ga)$. If $\gb$ is a maximal open subball of $\gc$, that is, if $\gb$ is an open ball contained in $\gc$ with $\rad(\gb) = \rad(\gc)$, then $\ga - \gb$ is an $\rv$-ball $\rv^{-1}(t)$ with $\vv(t) = \rad(\ga)$. This means that the collection of maximal open subballs of $\gc$ admits a $\K$-affine structure.

 \item Let $f(x)$ be a polynomial with coefficients in $\VF$ and $d_1, \ldots, d_n$ the roots of $f(x)$. Suppose that $\ga$ is contained in an $\rv$-ball and does not contain any $d_i$. Then each $\ga - d_i$ is contained in an $\rv$-ball and hence $f(\ga)$ is contained in an $\rv$-ball, that is, $(\rv \circ f)(\ga)$ is a singleton.
\end{enumerate}
Similar properties are available if $\ga$ is a closed ball.

A ball $\gb$ may be represented by a triple $(a, b, d) \in \VF^3$,
where $a \in \gb$, $\vv(b)$ is the radius of $\gb$, and $d = 1$ if
$\gb$ is open and $d = 0$ if $\gb$ is closed. A set $\gB$ of balls
is a subset of $\VF^3$ of triples of this form such that if $(a,
b, d) \in \gB$ then for all $a' \in \VF$ with $\rv(a - a') \,
\Box_d \, b$, where $\Box_d$ is $>$ if $d = 1$ or $\geq$ if $d =
0$, there is a $b' \in \VF$ with $\vv(b) = \vv(b')$ such that
$(a', b', d) \in \gB$. Clearly two triples $(a, b , d)$, $(a', b',
d') \in \gB$ represent two different balls, which may or may not
be disjoint, if and only if either $(\vv(b),d) \neq (\vv(b'), d')$
or, in case that they are the same, $\rv(a - a') \, \Box_d \, b$
does not hold.

Let $\gB$ be a set of balls. We note the following terminological convention. The union of $\gB$, written as $\bigcup \gB$, is actually the collection of the elements in the first coordinate, that is,
\[
\bigcup \gB = \set{a : (a, b, d) \in \gB \text{ for some } b, d} \sub \VF.
\]
Sometimes the assertion $\bigcup \gB \sub A$ is simply written as $\gB \sub A$. We say that $\gB$ is finite if it contains finitely many distinct balls.
A subset of $\gB$ is always a set of balls in $\gB$. A function
$f$ of $\gB$ is always a function on the balls in $\gB$; that is,
$f$ is a relation between $\bigcup \gB$ and a set $A$ such that for
every $\gb \in \gB$ there is a unique $x \in A$ between which and
every $(a, b , d) \in \gb$ the relation holds. Notice that $f$ may
or may not be a function on the triples in $\gB$.

In a similar way a ball $\gb$ may be represented by a triple in
$\VF \times \RV^2$. This representation is sometimes more
convenient. Below we shall not distinguish these two
representations.

\begin{cor}
As imaginary definable subsets, $\Gamma$ is $o$-minimal and the set of maximal open balls contained in a closed ball is strongly
minimal.
\end{cor}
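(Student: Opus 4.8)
The plan is to treat the two assertions separately, deriving each from quantifier elimination (Theorem~\ref{qe:acvf}) together with the structural facts about balls established above. For the $o$-minimality of $\Gamma$, I would take an arbitrary parametrically definable $D \sub \Gamma$ and pass to its preimage $A = \vv^{-1}(D) \sub \VF$. Being definable, $A$ is a finite boolean combination of balls by Theorem~\ref{c:min:acvf}; write the balls involved as $\gb_1, \dots, \gb_k$, with centres $c_i$ and radii $\gamma_i$. Since $\vv(ux) = \vv(x)$ for every unit $u$, the set $A$ is invariant under multiplication by $\UU$, hence is a union of ``spheres'' $\vv^{-1}(\gamma)$. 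For any $\gamma \notin \set{\vv(c_i) : i \leq k}$ and any $x$ with $\vv(x) = \gamma$, the ultrametric inequality gives $\vv(x - c_i) = \min \set{\gamma, \vv(c_i)}$, so whether $x \in \gb_i$ depends only on how $\gamma$ compares with $\gamma_i$ and with $\vv(c_i)$. Thus, off the finite set $\set{\vv(c_i) : i \leq k}$, membership in $D$ is governed by a boolean combination of conditions $\gamma \, \Box \, \delta$ with $\delta$ ranging over finitely many elements of $\Gamma$; this defines a finite union of points and intervals, and re-inserting the finitely many exceptional values keeps $D$ of that form. Hence $\Gamma$ is $o$-minimal.

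Next fix a closed ball $\gc$ of radius $\delta < \infty$ (when $\gc$ is a point there is nothing to prove) and let $\mathcal{B}(\gc)$ denote the set of its maximal open subballs, a definable imaginary. Recall that $\mathcal{B}(\gc)$ carries a definable $\K$-affine structure (property~(6) above). Concretely, fix a point $c_0 \in \gc$, let $\gb_0 \in \mathcal{B}(\gc)$ be the subball containing $c_0$, and fix $t_0 \in \RV$ with $\vrv(t_0) = \delta$; then the assignment sending a subball $\gb \neq \gb_0$ with centre $c$ to $\rv(c - c_0) \cdot t_0^{-1} \in \K^{\times}$, and sending $\gb_0$ to $0$, is a well-defined definable bijection $\mathcal{B}(\gc) \fun \K$. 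In particular $\mathcal{B}(\gc)$ is infinite, and the claim reduces to showing that every parametrically definable subset of $\K$ in one variable is finite or cofinite, i.e.\ that the induced structure on the residue field is that of a pure algebraically closed field.

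For this last point I would apply quantifier elimination to replace a formula $\phi(y)$, with $y$ a $\K$-sort variable, by an equivalent quantifier-free one over the same parameters, and then invoke the $\VF$-literal/$\RV$-literal normal form. Since there is no $\VF$-variable, every polynomial occurring is a $\VF$-parameter, so each $\K$-term becomes, once the parameter-dependent definedness conditions on its summands hold, an honest polynomial in $y$ with coefficients in $\K$. Restricting $y$ to $\K^{\times}$, all $\vrv$-values are $0$: every inequality literal is then constant, and every equality or disequality literal becomes, after a case split on the relevant parameter conditions, a polynomial (dis)equation over $\K$ in $y$. So $\phi(\K)$ is, in each of finitely many cases, a boolean combination of Zariski-closed subsets of the affine line over $\K$, hence finite or cofinite; therefore $\phi(\K)$ is finite or cofinite. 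Transporting along the bijection of the previous paragraph, every definable subset of $\mathcal{B}(\gc)$ is finite or cofinite, so $\mathcal{B}(\gc)$ is strongly minimal.

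The main obstacle I anticipate is bookkeeping rather than ideas: in both parts one must carefully separate the ``genuine'' conditions, which depend on the free variable, from the ``parametric'' ones, which are fixed once the parameters are chosen, and in the $\Gamma$-case one must isolate the finitely many radii $\vv(c_i)$ at which a ball's centre lies on a sphere. If one prefers to avoid the literal analysis altogether, an alternative is to route everything through Remark~\ref{L:v:and:L:RV}: a one-variable formula over a $\VF$-generated parameter set reduces to $\lan{v}$, in which $o$-minimality of $\Gamma$ and strong minimality of the residue field for $\ACVF$ are classical, and one then only needs to verify that $\RV$-sort parameters add no new definable subsets --- which is visible from the ultrametric and affine descriptions used above.
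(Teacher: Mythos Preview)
The paper states this corollary without proof; it is meant to follow immediately from Theorem~\ref{c:min:acvf} together with property~(6) in the list of ball facts (the $\K$-affine structure on maximal open subballs of a closed ball), and more broadly from the general $C$-minimality theory cited there. Your argument correctly supplies the details along exactly the lines the corollary's placement suggests --- pulling back to $\VF$ and using the ultrametric for the $\Gamma$ part, and transporting along the $\K$-bijection plus a literal analysis for the strong-minimality part --- and the alternative route via Remark~\ref{L:v:and:L:RV} that you sketch at the end is in fact even closer to the paper's implicit reasoning, since the proof of Theorem~\ref{c:min:acvf} already reduces one-variable $\VF$-formulas to $\lan{v}$-formulas before invoking \cite{macpherson:steinhorn:variants:ominimal}.
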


\begin{defn}
A subset $\gd$ of $\VF$ is a \emph{punctured} (\emph{open, closed,
$\rv$-}) \emph{ball} if $\gd = \gb \mi \bigcup_{i=1}^n \gh_i$, where
$\gb$ is an (open, closed, $\rv$-) ball, $\gh_i, \ldots, \gh_n$
are disjoint balls, and $\gh_i, \ldots, \gh_n \sub \gb$. Each
$\gh_i$ is a \emph{hole} of $\gd$. The \emph{radius} and the
\emph{valuative center} of $\gd$ are those of $\gb$. A subset $\gs$ of
$\VF$ is a \emph{simplex} if it is a finite union of disjoint balls and punctured balls of the same radius and the same valuative center, which are defined to be the \emph{radius} and the \emph{valuative center} of $\gs$ and are denoted by $\rad(\gs)$ and $\vcr(\gs)$.
\end{defn}

A special kind of simplex is called \emph{a thin annulus}: it is a
punctured closed ball $\gb$ with a single hole $\gh$ such that
$\gh$ is a maximal open ball contained in $\gb$. For example, an
element $\gamma \in \Gamma$ may be regarded as a thin annulus: it
is the punctured closed ball with radius $\gamma$ and valuative
center $\infty$ and the special maximal open ball containing $0$
removed.

\begin{defn}
Let $\gb_1, \ldots, \gb_n$ be the positive boolean components of a subset $A \sub \VF$. The \emph{positive closure} of $A$ is the set of the smallest closed balls $\set{\gc_1, \ldots, \gc_m}$ such that
each $\gc_i$ contains some $\gb_j$.
\end{defn}

Note that, if $A \sub \VF$ is definable from a set of parameters
then its positive closure is definable from the same set of
parameters.

\begin{rem}
By Theorem~\ref{c:min:acvf}, for any parametrically definable subset $A$ of $\VF$, there are disjoint balls and punctured balls $\ga_1, \ldots, \ga_l$ obtained from a unique set of balls $\gb_1, \ldots, \gb_n, \gh_1, \ldots, \gh_m$
such that $A = \bigcup_i \gb_i \mi \bigcup_j \gh_j$. If we group
$\ga_1, \ldots, \ga_l$ by their radii and valuative centers then
$A$ may also be regarded as the union of a unique set of disjoint parametrically definable simplexes. Each $\gb_i$ is a \emph{positive boolean component of $A$} and each $\gh_j$ is a \emph{negative boolean component of $A$}. The set of positive boolean components and the set of negative boolean components are both definable from the same parameters.
\end{rem}

\subsection{More structural properties and $b$-minimality}

\emph{For the rest of this section we shall assume that $\gC$ is of pure characteristic $0$}.

The following simple lemma is vital to the inductive arguments below.
It fails when $\cha{\K} > 0$.

\begin{lem}\label{average:0:rv:not:constant}
Let $c_1, \ldots, c_k \in \VF$ be distinct elements of the same
value $\alpha$ such that their average is $0$. Then for some $c_i
\neq c_j$ we have $\vv(c_i -c_j) = \alpha$ and hence $\rv$ is
not constant on the set $\set{c_1, \ldots, c_k}$.
\end{lem}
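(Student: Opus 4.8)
The plan is to prove the contrapositive of the first assertion: assuming $\vv(c_i - c_j) > \alpha$ for all $i \neq j$, I will show that the average $(c_1 + \cdots + c_k)/k$ cannot be $0$. First note that, since the $c_i$ are distinct and $k \geq 2$ (else there is nothing to prove), they are not all equal, so $\alpha \in \Gamma$ is finite and every $c_i$ is nonzero; thus all the valuations and divisions below make sense.

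Next I would set $d_i = c_i - c_1$, so that $d_1 = 0$ and $\vv(d_i) > \alpha$ for every $i$ by the standing hypothesis, and expand $\sum_{i=1}^k c_i = k c_1 + \sum_{i=1}^k d_i$. The key input, and the only place where pure characteristic $0$ enters, is that the integer $k$ is a unit of $\OO$: the prime field $\Q$ lies in $\OO$ with $\Q^{\times} \subseteq \UU$, so $\vv(k) = 0$ and hence $\vv(k c_1) = \vv(c_1) = \alpha$. On the other hand the ultrametric inequality gives $\vv\bigl(\sum_{i=1}^k d_i\bigr) \geq \min_i \vv(d_i) > \alpha$. Since $\vv(k c_1) \neq \vv\bigl(\sum_i d_i\bigr)$, the sum $k c_1 + \sum_i d_i = \sum_i c_i$ has valuation $\alpha < \infty$, in particular it is nonzero, so the average is not $0$. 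This proves that some pair must satisfy $\vv(c_i - c_j) = \alpha$ (recalling that $\vv(c_i - c_j) \geq \alpha$ always holds). For such a pair, $\vv(c_i/c_j - 1) = \vv(c_i - c_j) - \alpha = 0$, so $c_i/c_j \notin 1 + \MM$ and therefore $\rv(c_i) \neq \rv(c_j)$, i.e. $\rv$ is not constant on $\set{c_1, \ldots, c_k}$.

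I do not anticipate a real obstacle here: the whole content is the observation that $k$ is a unit when $\cha(\K) = 0$, which makes $\vv(k c_1)$ strictly smaller than the valuation of the correction term $\sum_i d_i$. This is exactly what fails in residue characteristic $p > 0$ --- if $p \mid k$ then $\vv(k)$ may be positive and the two valuations can coincide (and indeed the conclusion is false there, as the $p$ distinct $p$-th roots of a suitable element show), which accounts for the remark that the lemma fails when $\cha{\K} > 0$. The only mild care needed is the degenerate bookkeeping in the first paragraph to guarantee that $\alpha$ is finite.
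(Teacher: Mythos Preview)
Your proof is correct and follows essentially the same route as the paper: both assume $\vv(c_i-c_j)>\alpha$ for all $i\neq j$, rewrite $\sum_i c_i$ as $kc_1$ plus a sum of differences $c_i-c_1$, and use $\vv(k)=0$ (from $\cha\K=0$) to force $\vv\bigl(\sum_i c_i\bigr)=\alpha$, contradicting that the average is $0$. The only cosmetic differences are that the paper phrases it as a direct contradiction rather than a contrapositive and leaves the final ``hence $\rv$ is not constant'' implicit, whereas you spell out the computation $\vv(c_i/c_j-1)=0$.
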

\begin{proof}
Suppose for contradiction that $\vv(c_i -c_j) > \alpha$ for all
$i \neq j$. Since $\cha{\K} = 0$ and $c_1 = -(c_2 + \ldots + c_k)$, we have
\[
\alpha = \vv (kc_1) = \vv ((k-1)c_1 - (c_2 + \ldots + c_k))
= \vv \left( \sum_{i =2}^k (c_1 - c_i) \right) > \alpha,
\]
contradiction.
\end{proof}

An important consequence of Lemma~\ref{average:0:rv:not:constant} is this:

\begin{lem}\label{finite:VF:project:RV}
Let $A$ be a definable finite subset of $\VF^n$. Then there is a definable injection $f : A \fun \RV^m$ for some $m$.
\end{lem}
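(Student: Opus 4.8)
The plan is a two-layer induction: an outer induction on $n$ that reduces the statement to the case $n=1$ by projecting onto a coordinate hyperplane, and, inside the case $n=1$, an induction on $\abs A$. Since the fibres of that projection are finitely many but possibly conjugate, the case $n=1$ must be proved uniformly, so I would first establish:

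\emph{Claim.} For every definable family $\set{A_s : s \in D}$ of finite subsets of $\VF$ (with $D \sub \RV^{m_0}$ definable) there are an $m$ and a definable family of injections $h_s : A_s \fun \RV^m$.

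I would prove the Claim by induction on $k = \max_s \abs{A_s}$. Partitioning $D$ into the definable pieces on which $\abs{A_s}$ is a fixed value $j$, we may assume $\abs{A_s} = j \le k$ for all $s$; if $j \le 1$ there is nothing to prove, so let $j \ge 2$. The average $b_s = \tfrac1j \sum_{a \in A_s} a$ is definable in $s$ (available since $\gC$ has pure characteristic $0$), and, $x \mapsto x - b_s$ being a definable bijection, we may replace $A_s$ by $A_s - b_s$ and so assume $\sum_{a \in A_s} a = 0$. Split $D = D_0 \sqcup D_1$ by whether $0 \in A_s$. On $D_0$ we apply the inductive hypothesis to $\set{A_s \mi \set{0}}$, whose members have size $j-1 \le k-1$, and prepend one $\RV$-coordinate flagging the point $0$. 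On $D_1$ the map $\rv$ is defined on all of $A_s$, and here Lemma~\ref{average:0:rv:not:constant} is the crucial point: if $A_s$ were contained in a single $\rv$-ball, all its elements would share one value and, the average being $0$, that lemma would make $\rv$ non-constant on $A_s$ --- a contradiction. Hence $\abs{\rv(A_s)} \ge 2$, so every fibre $(A_s)_t = A_s \cap \rv^{-1}(t)$ has size $\le j-1 \le k-1$; applying the inductive hypothesis to the family $\set{(A_s)_t}$ indexed by $\set{(s,t) : s \in D_1,\ t \in \rv(A_s)} \sub \RV^{m_0+1}$ yields definable injections $g_{s,t} : (A_s)_t \fun \RV^{m'}$, and then $a \mapsto \bigl(\rv(a),\, g_{s,\rv(a)}(a)\bigr)$ is a definable injection $A_s \fun \RV^{1+m'}$. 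Padding the $D_0$- and $D_1$-families to a common codomain $\RV^m$ by constant coordinates completes the induction.

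Granting the Claim, the lemma follows by induction on $n$, the case $n=1$ being the Claim with $D$ a single point. For $n>1$, let $\pi : \VF^n \fun \VF^{n-1}$ be the projection onto the first $n-1$ coordinates and $B = \pi(A)$, a finite definable set; by the inductive hypothesis there is a definable injection $g : B \fun \RV^{m'}$. Re-indexing the fibres of $\pi$ through $g$, the sets $C_u = \set{c \in \VF : (g^{-1}(u), c) \in A}$, for $u \in g(B) \sub \RV^{m'}$, form a definable family of finite subsets of $\VF$, so the Claim provides definable injections $h_u : C_u \fun \RV^{m''}$; then $(\bar b, c) \mapsto \bigl(g(\bar b),\, h_{g(\bar b)}(c)\bigr)$ is the desired injection $A \fun \RV^{m'+m''}$.

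The single substantive ingredient --- and the only step that positive residue characteristic would break --- is the appeal to Lemma~\ref{average:0:rv:not:constant}, which guarantees that once a finite set is re-centred at its average, having more than one point forces it to meet at least two $\rv$-balls; this is exactly what makes $\abs A$ decrease in the inner induction. Everything else is routine bookkeeping to keep all constructions uniform in $s$, so that the finitely many (possibly conjugate) fibres over a given parameter are handled simultaneously.
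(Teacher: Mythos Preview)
Your proposal is correct and follows essentially the same double induction as the paper: subtract the average, invoke Lemma~\ref{average:0:rv:not:constant} to see that $\rv$ is non-constant, and recurse on the strictly smaller $\rv$-fibres; then handle $n>1$ by projecting and treating the fibres as a definable family. The only differences are cosmetic: you package the required uniformity into an explicit Claim about definable families, whereas the paper obtains it by a terse appeal to compactness; and you split the base case on whether $0\in A_s$, while the paper first reduces to the case that $\vv$ is constant on the shifted set --- both routes arrive at the same application of Lemma~\ref{average:0:rv:not:constant}.
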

\begin{proof}
We do double induction on $n$ and the number $k$ of elements in $A$. For $n = 1$, let $A = \set{c_1, \ldots, c_k} \sub \VF$ and $c$ the average of $A$. Then we may assume that $A = \set{c_1 -c, \ldots, c_k-c}$ and hence the average of $A$ is 0. Since every $\vv(c_i -c)$ is definable, by the inductive hypothesis we may further assume that $\vv$ is constant on $A$, say, $\vv(c_i) = \alpha$ for all $i$. By
Lemma~\ref{average:0:rv:not:constant}, $\rv$ is not constant on
$A$, that is, $1 < \abs{\rv(A)} \leq k$. So $1 \leq
\abs{\rv^{-1}(t)\cap A} < k$ for each $t \in \rv(A)$. By the inductive hypothesis, for a suitable number $m$, there is a $t$-definable injection
\[
f_t : \rv^{-1}(t) \cap A \fun \RV^m
\]
for each $t \in \rv(A)$. Then, by compactness, the function $f : A \fun \RV^{m+1}$ given by
\[
c_i \efun (\rv(c_i), f_{\rv(c_i)}(c_i))
\]
is definable and is as required.

Now suppose $n > 1$. Let $\pr_n(A)$ be the projection of $A$ to the last coordinate. For each $c \in \pr_n(A)$ let $\fib(A, c)$ be the fiber $\set{\lbar a : (\lbar a, c) \in A}$. By the inductive hypothesis, for a suitable number $m$, there is a definable injection $g: \pr_n(A) \fun \RV^m$ and, for each $c \in \pr_n(A)$, a $c$-definable injection $f_{c} : \fib(A, c) \fun \RV^m$. Then, by compactness, the function $f : A \fun \RV^{2m}$ given by
\[
(\lbar a, c) \efun (f_c(\lbar a), g(c))
\]
is definable and is as required.
\end{proof}

\begin{lem}\label{function:dim1:decom:RV}
Let $A$, $B \sub \VF$ and $f : A \fun B$ a definable surjective function. Then there is a definable function $P : A \fun \RV^m$ such that, for each $\lbar t \in \ran(P)$, $f \rest P^{-1}(\lbar t)$ is either constant or injective.
\end{lem}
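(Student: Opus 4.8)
The plan is to split the range $B$ according to whether the fibers of $f$ are finite or infinite, handle the two parts separately, and then glue the resulting partitions together. First I would apply Lemma~\ref{function:dim:1:range:decom} to write $B = B_1 \sqcup B_2$ with $B_1$, $B_2$ definable, $B_1$ finite, $f^{-1}(b)$ infinite for every $b \in B_1$, and $f \rest f^{-1}(B_2)$ finite-to-one. Put $A_1 = f^{-1}(B_1)$ and $A_2 = f^{-1}(B_2)$, a definable partition of $A$.

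Over $A_1$ there is nothing to do beyond bookkeeping. Since $B_1$ is a definable finite subset of $\VF$, Lemma~\ref{finite:VF:project:RV} supplies a definable injection $\iota : B_1 \fun \RV^k$, and $P_1 := \iota \circ f$ is a definable map $A_1 \fun \RV^k$ whose fibers are exactly the sets $f^{-1}(b)$ for $b \in B_1$, on each of which $f$ is constant.

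Over $A_2$ the map $g := f \rest A_2$ is definable, surjective onto $B_2$, and finite-to-one, so by compactness its fibers have size at most some $N$. Each $g^{-1}(b)$ is a $b$-definable finite subset of $\VF$, and I would apply Lemma~\ref{finite:VF:project:RV} to this definable family \emph{uniformly}: the double induction in its proof is parameter-uniform, and after cutting $B_2$ into finitely many definable pieces compactness yields a single number $l$ and a single formula describing a family of injections $h_b : g^{-1}(b) \fun \RV^l$. Setting $P_2(a) := h_{f(a)}(a)$ for $a \in A_2$ gives a definable map $P_2 : A_2 \fun \RV^l$ that restricts to $h_b$ on each fiber $g^{-1}(b)$. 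Hence if $a, a' \in P_2^{-1}(\lbar t)$ satisfy $f(a) = f(a')$, then $a$ and $a'$ lie in a common $g$-fiber with $P_2(a) = P_2(a')$, forcing $a = a'$; that is, $f \rest P_2^{-1}(\lbar t)$ is injective for every $\lbar t \in \ran(P_2)$.

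Finally I would glue the two pieces. Since $A = A_1 \sqcup A_2$ is a definable partition and the $\RV$-sort contains at least two distinct elements, say $1$ and $\infty$, after padding $P_1$ and $P_2$ with a fixed constant so that both take values in $\RV^{m-1}$ with $m - 1 = \max(k, l)$, I define $P : A \fun \RV^m$ by $P(a) = (1, P_1(a))$ on $A_1$ and $P(a) = (\infty, P_2(a))$ on $A_2$. Every fiber of $P$ is contained in $A_1$ or in $A_2$ and there coincides with a fiber of $P_1$ or of $P_2$, so $f$ restricted to it is constant in the first case and injective in the second, which is what is required. The only step that is not mere bookkeeping is the uniform invocation of Lemma~\ref{finite:VF:project:RV} over the family $\{ g^{-1}(b) : b \in B_2 \}$; I expect this parameter-uniformity to be the main (indeed, the only) technical point, and it goes through by a routine compactness argument once one inspects the proof of that lemma.
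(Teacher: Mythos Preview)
Your proposal is correct and follows essentially the same route as the paper's own proof: partition $B$ via Lemma~\ref{function:dim:1:range:decom}, inject the finite set $B_1$ and the finite fibers over $B_2$ into $\RV$-powers via Lemma~\ref{finite:VF:project:RV}, and finish by compactness. The paper compresses all of this into three lines, leaving the uniformity and gluing implicit; you have simply unpacked those details.
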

\begin{proof}
Let $B_1$, $B_2$ be a partition of $B$ as given by Lemma~\ref{function:dim:1:range:decom}. By Lemma~\ref{finite:VF:project:RV}, there is an injection $B_1 \fun \RV^l$. The same holds for every $f^{-1}(b)$ with $b \in B_2$. So the lemma follows from compactness.
\end{proof}

\begin{lem}\label{function:rv:to:vf:finite:image}
Let $A$ be a definable subset of $\RV^m$ and $f: A \fun \VF^n$ a definable function. Then $f(A)$ is finite.
\end{lem}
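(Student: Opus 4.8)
The plan is to reduce the statement to the absence of nontrivial algebraicity from the auxiliary sort $\RV$ into $\VF$. The ingredient I would establish first is a cross-sort companion to Lemma~\ref{exchange}(1): for every finite tuple $\bar s$ from $\RV$,
\[
\acl(\bar s) \cap \VF \sub \acl(\0).
\]
Granting this, the lemma is quick. We may assume $A \neq \0$. Suppose $f(A)$ were infinite; replacing $f$ by its composite with a suitable coordinate projection $\VF^n \fun \VF$, we obtain a $\0$-definable function $g : A \fun \VF$ whose image $g(A)$ is infinite. Since $g(A)$ is $\0$-definable and infinite and $\gC$ is sufficiently saturated, the partial type asserting $x \in g(A)$ together with $x \neq c$ for all $c \in \acl(\0)$ is finitely satisfiable, hence realized by some $b \in g(A) \mi \acl(\0)$. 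Writing $b = g(s)$ with $s \in A \sub \RV^m$, we get $b \in \dcl(s) \cap \VF \sub \acl(\bar s) \cap \VF \sub \acl(\0)$, a contradiction. Hence $f(A)$ is finite.

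So the real content is the displayed inclusion, and I would prove it by transcribing the argument in the proof of Lemma~\ref{exchange}(1). Let $a \in \acl(\bar s) \cap \VF$. By Theorem~\ref{qe:acvf} there is a quantifier-free formula $\phi(X, \bar s)$, whose only parameters are the $\RV$-tuple $\bar s$, defining a finite subset of $\VF$ that contains $a$; using the normal form recalled after the definition of literals, I take $\phi$ to be a disjunction of conjunctions of $\VF$-literals and $\RV$-literals, and I fix a disjunct $\psi$ satisfied by $a$. Since no $\VF$-sort parameter occurs, every polynomial appearing in $\psi$ --- as the left side of a $\VF$-literal, or inside an occurrence of $\rv$ in an $\RV$-literal --- has coefficients in $\dcl(\0) \cap \VF \sub \acl(\0)$. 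If $a$ is a zero of some such polynomial $H$, then $a$ is algebraic over the finitely many coefficients of $H$, so $a \in \acl(\0)$; in particular this handles any $\psi$ carrying a nontrivial $\VF$-equality. Otherwise $\psi$ consists of $\VF$-disequalities and $\RV$-literals and no polynomial occurring in $\psi$ vanishes at $a$, so by the local constancy of $\rv$ off the zero set of a polynomial (a basic property of balls recorded above) there is $\gamma \in \Gamma$ such that $\rv(H(d)) = \rv(H(a))$ for every $d$ in the open ball $\go(a, \gamma)$ and every polynomial $H$ occurring in $\psi$. For such $d$: each $\VF$-disequality of $\psi$ holds at $d$ (as $\rv(H(d)) = \rv(H(a)) \neq \infty$), and the truth value at $X = d$ of each $\RV$-literal of $\psi$ depends only on the $\rv$-values $\rv(H(d))$ of the polynomials it mentions together with the fixed $\RV$-sort parameters and constants of $\psi$ (this also subsumes the definedness of any $\K$-terms involved, which is itself governed by those $\rv$-values), hence equals the value it had at $a$, namely true. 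Thus $\go(a, \gamma) \sub \phi(\gC, \bar s)$, which is impossible since an open ball in a model of $\ACVF$ is infinite while $\phi(\gC, \bar s)$ is finite.

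The only non-routine step is the cross-sort inclusion $\acl(\bar s) \cap \VF \sub \acl(\0)$; the rest is a saturation argument of a standard kind. The main points to watch are that the normal form makes only finitely many polynomials occur and that, once the parameters $\bar s$ are substituted, the truth value of each literal in the free variable $X$ genuinely factors through the tuple of $\rv$-values of those polynomials at the argument. Finally, note that this argument is insensitive to the characteristic --- it never uses $\cha(\K) = 0$ --- so the lemma in fact holds for $\ACVF$ of arbitrary characteristic, even though we only need it here.
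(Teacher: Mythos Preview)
Your proof is correct. Both your argument and the paper's rest on the same syntactic observation: once the only free $\VF$-variable is $X$ and the extra parameters are purely from $\RV$, any disjunct without a $\VF$-equality is satisfied on an open ball around any point that avoids the finitely many zeros of the occurring polynomials, by local constancy of $\rv$. You package this as the cross-sort statement $\acl(\bar s)\cap\VF\sub\acl(\0)$ and then finish with a saturation argument; the paper instead works directly with a quantifier-free formula $\phi$ defining the graph of $f$, picks a disjunct $\phi^*$ lacking an irredundant $\VF$-equality, and derives a contradiction with functionality by producing $(\bar t,b)$ and $(\bar t,d)$ both satisfying $\phi^*$ with $b\neq d$. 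Your route has the virtue of isolating a reusable principle (and your use of compactness/saturation to locate $b\notin\acl(\0)$ is clean); the paper's is slightly more direct in that it never leaves the single formula defining $f$. Your closing remark that the argument is characteristic-free is correct and worth recording: the lemma appears in the paper after the blanket assumption $\cha(\K)=0$, but neither proof actually uses it.
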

\begin{proof}
We do induction on $n$. For the base case $n =1$, suppose for contradiction that $f(A)$ is infinite. By \cmin-minimality, $f(A)$ is a union of disjoint balls and punctured balls $\gb_1, \ldots, \gb_l$ such that $\rad \gb_i < \infty$ for
some $i$, say $\gb_1$. By QE, let $\phi$ be a
disjunction of conjunctions of literals that defines $f$. Since $f(A)$ is infinite, there is at least one disjunct in $\phi$, say $\phi^*$, that does not have an irredundant $\VF$-sort equality as a conjunct. Fix a $b \in \gb_1$ and a $\lbar t \in A$ such that
\begin{enumerate}
  \item the pair $(\lbar t, b)$ satisfies $\phi^*$,
  \item for any polynomial $G(X)$ occurring in $\phi^*$ in the form $\rv(G(X))$, $G(b) \neq 0$.
\end{enumerate}
We see that, for any $d \in \VF$ and any term $\rv(G(X))$ in $\phi^*$, if $\vv(d -b)$ is sufficiently large then $\rv(G(b)) = \rv(G(d))$. So there
is a $d \in \gb_1$ such that the pair $(\lbar t, d)$ also
satisfies $\phi^*$, which is a contradiction as $f$ is a function. In general, for $n > 1$, by the inductive hypothesis both $(\pr_1 \circ f) (A)$ and $(\pr_{>1} \circ f) (A)$ are finite, where $\pr_1 : \VF^n \fun \VF$ and  $\pr_{>1} : \VF^n \fun \VF^{n-1}$ are coordinate projections, hence $f(A)$ is finite.
\end{proof}

\begin{lem}\label{approx:roots}
Let $\gb$ be a ball contained in an $\rv$-ball $t$. Let $G_1(X), \ldots, G_n(X)$ be polynomials with coefficients in $S$. Suppose that $\gb$ does not contain any root of any $G_i(X)$ (hence $\rv$ is constant on every $G_i(\gb)$). If $\gb$ is a closed ball then there is a $d \in t \mi \gb$ such that $\rv(G_i(d)) = \rv(G_i(\gb))$ for every $i$. If $\gb$ is an open ball then there is a $d \in t \mi \gb$ such that $\vv(G_i(d)) = \vv(G_i(\gb))$ for every $i$.
\end{lem}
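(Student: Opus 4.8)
The plan is to reduce the claim to the behaviour of the linear factors of the polynomials $G_i$ and then to choose $d$ either in a ball slightly larger than $\gb$ (closed case) or in a neighbouring maximal open subball (open case); we may assume $\gb \subsetneq t$. Fix $b_0 \in \gb$, factor each $G_i(X) = c_i \prod_j (X - e_{ij})$ over the algebraically closed field $\VF(\gC)$, and let $e_1, \ldots, e_r$ enumerate all the roots that occur; by hypothesis no $e_\ell$ lies in $\gb$, so each $\beta_\ell := \vv(b_0 - e_\ell)$ lies in $\Gamma$. Since $\rv$, and hence $\vv$, is constant on $G_i(\gb)$, we have $\rv(G_i(\gb)) = \rv(G_i(b_0)) = \rv(c_i) \prod_j \rv(b_0 - e_{ij})$ and $\vv(G_i(\gb)) = \vv(G_i(b_0)) = \vv(c_i) + \sum_j \vv(b_0 - e_{ij})$. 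Using that $\rv(x) = \rv(y)$, for nonzero $x, y$, amounts to $\vv(x - y) > \vv(y)$, we see it is enough, in the closed case, to find $d \in t \mi \gb$ with $\vv(d - b_0) > \beta_\ell$ for all $\ell$: then $\rv(d - e_\ell) = \rv(b_0 - e_\ell)$ for every $\ell$, so $d$ is a root of no $G_i$ and $\rv(G_i(d)) = \rv(G_i(\gb))$. In the open case it is enough to find $d \in t \mi \gb$ with $\vv(d - e_\ell) = \beta_\ell$ for all $\ell$, which gives $\vv(G_i(d)) = \vv(G_i(\gb))$.

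\emph{Closed case.} Write $\gb = \gc(b_0, \rho)$ and $\gamma = \rad(t)$. A closed ball contained in an open ball has strictly larger radius, so $\gamma < \rho$; and since no root lies in $\gb$, also $\beta_\ell < \rho$ for every $\ell$. Put $\eta = \max\{\gamma, \beta_1, \ldots, \beta_r\} < \rho$. Then $\go(b_0, \eta)$ is contained in $t$ and contains $\gb$ properly, so any $d \in \go(b_0, \eta) \mi \gb$ satisfies $\vv(d - b_0) > \eta \ge \beta_\ell$ for all $\ell$, as required.

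\emph{Open case.} Write $\gb = \go(b_0, \rho)$; since $\gb \subsetneq t$, we have $\rho > \rad(t)$, so the closed ball $\gc(b_0, \rho)$ is contained in $t$ and has $\gb$ as one of its maximal open subballs. No root $e_\ell$ has $\vv(b_0 - e_\ell) > \rho$ (that would place $e_\ell$ in $\gb$), so $\beta_\ell \le \rho$ for every $\ell$. Because the residue field of $\gC$ is algebraically closed, hence infinite, $\gc(b_0, \rho)$ has infinitely many maximal open subballs, whereas only finitely many of them equal $\gb$ or contain one of $e_1, \ldots, e_r$; pick $d$ in one of the remaining ones. If $\beta_\ell < \rho$ then $e_\ell \notin \gc(b_0, \rho)$ while $\vv(d - b_0) \ge \rho > \beta_\ell$, so $\vv(d - e_\ell) = \beta_\ell$; if $\beta_\ell = \rho$ then $d$ and $e_\ell$ lie in distinct maximal open subballs of $\gc(b_0, \rho)$, so $\vv(d - e_\ell) = \rho = \beta_\ell$. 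Since moreover $d \in \gc(b_0, \rho) \mi \gb \subseteq t \mi \gb$, this $d$ works.

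The step that needs care is the last case: a root of some $G_i$ may lie at valuative distance exactly $\rad(\gb)$ from $b_0$ — just outside $\gb$, but inside the smallest closed ball around it — and for such a root the value of $G_i$ cannot be kept constant merely by enlarging $\gb$; it is essential instead to exploit that this closed ball splits into infinitely many maximal open subballs and to place $d$ in one carrying no root. Everything else amounts to routine ultrametric manipulations together with the properties of balls already recorded.
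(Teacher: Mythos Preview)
Your proof is correct and follows essentially the same approach as the paper's: factor each $G_i$ over the algebraically closed field and control $\rv$ (respectively $\vv$) of each linear factor $X - e_\ell$ by picking $d$ close enough to $\gb$ in the closed case, and in a suitable neighbouring maximal open subball in the open case. The paper's own proof is terser---it simply asserts the existence of a $d \in t \setminus \gb$ with $\vv(a_i - d) = \vv(a_i - \gb)$ and the appropriate inequality on $\vv(d - \gb)$---whereas you supply the explicit construction, in particular invoking the infinitude of the residue field in the open case to avoid the finitely many maximal open subballs containing a root; this is exactly the point the paper leaves to the reader.
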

\begin{proof}
Since the argument is essentially the same for every $n$, for simplicity, we assume $n=1$ and the polynomial is written as $G(X)$. Let $a_1, \ldots, a_k$ be the roots of $G(X)$. Then there is a $d \in t \mi \gb$ such that, if $\gb$ is a closed ball then
\[
\vv(a_i - \gb) = \vv(a_i - d) < \vv(d - \gb) \leq \rad \gb
\]
for every $a_i$ and if $\gb$ is an open ball then
\[
\vv(a_i - \gb) = \vv(a_i - d) \leq \vv(d - \gb) \leq \rad \gb
\]
for every $a_i$. So, for such an element $d$: if $\gb$ is a closed ball then $\rv(a_i - \gb) = \rv(a_i - d)$ and hence $\rv(G(d)) = \rv(G(\gb))$; if $\gb$ is an open ball then at least $\vv(G(d)) = \vv(G(\gb))$.
\end{proof}

\begin{defn}
Let $\gB$ be a definable set of balls. If $\gB$ contains finitely many (open, closed, $\rv$-) balls, say, $\gb_1, \ldots, \gb_n$, then $\gB$ is an \emph{algebraic set of balls}, $\bigcup \gB$ is an \emph{algebraic union of balls}, and each $\gb_i$ is an \emph{algebraic} (\emph{open, closed, $\rv$-})
\emph{ball}. If there is a definable function $f: \gB \fun \VF$ such that
$f(\gb_i) \in \gb_i$ for every $\gb_i$ then we say that
$\gB$ has \emph{centers} and $f(\gB)$ is a \emph{set of centers of $\gB$}.
\end{defn}

\begin{lem}\label{effectiveness}
Let $\gB$ be an algebraic set of closed balls. Then $\gB$ has centers.
\end{lem}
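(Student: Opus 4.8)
The plan is to reduce the statement, by a compactness argument together with the averaging property of balls recorded above (in characteristic zero the average of a finite subset of a ball lies in that ball), to the single claim that \emph{every closed ball algebraic over $\0$ contains a point of $\VF$ algebraic over $\0$}.

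Granting this, here is how I would produce the centers. Write $\gB = \set{\gb_1, \dots, \gb_n}$. The action of $\aut(\gC / S)$ on the finite definable set $\gB$ partitions it into finitely many orbits, each of them a definable subfamily, and each $\gb_i$ is algebraic over $\0$. Fix one ball $\gb$ in a given orbit and, by the claim, a point $c \in \gb$ algebraic over $\0$. Its orbit $E_{\gb}$ under $\aut(\gC / S)$ is finite, hence (being a finite $\aut(\gC/S)$-invariant subset of $\VF$) definable, and one checks $E_{\gb} \sub \bigcup \gB$ and that $E_{\gb}$ meets every ball in the orbit of $\gb$. Taking the union $E$ of the sets $E_{\gb}$ over the finitely many orbits gives a definable finite $E \sub \bigcup \gB$ meeting every $\gb_i$. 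Now define $f$ on $\gB$ by sending the ball represented by a triple $(a, b, 0)$ to the average of the finite nonempty set $E \cap \set{x \in \VF : \rv(x - a) \geq b}$. That set is exactly $E \cap \gb_i$ for the ball $\gb_i$ represented by the triple, so by the averaging property (here $\cha(\K) = 0$ is used) its average lies in $\gb_i$; moreover $f$ is definable and depends only on the ball, not on the triple. Hence $f(\gB) = f(\gB)$ is a set of centers of $\gB$.

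It remains to establish the claim, and this I expect to be the main obstacle. Let $\gb$ be a closed ball algebraic over $\0$. If $0 \in \gb$ we are done. Otherwise $\gb$ lies in a single $\rv$-ball $\rv^{-1}(t)$, and $t$, being determined by $\gb$, is algebraic over $\0$; using the structure of $\acl(\0)$ as a valued subfield of $\gC$ — its residue field is algebraically closed and its value group is the divisible hull of $\Gamma(\la S \ra)$, so that $\rv$ carries $\VF(\acl(\0))^{\times}$ onto $\RV(\acl(\0))^{\times}$ — pick $a_0 \in \VF(\acl(\0))$ with $\rv(a_0) = t$. Then $\gb - a_0$ is again a closed ball algebraic over $\0$, of the same radius, whose valuative center lies strictly above $\vrv(t)$; if $0 \in \gb - a_0$, then $a_0 \in \gb$ is the desired algebraic point, and otherwise one repeats with $\gb - a_0$ in place of $\gb$. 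The delicate part is that this process must terminate after finitely many steps: the valuative centers strictly increase but are bounded by $\rad(\gb)$, so termination is not formal, and I would derive it from the fact that $\gb$ has only finitely many conjugates over $\0$, which should bound the length of the tower of $\0$-algebraic closed balls that the procedure traverses before reaching a ball that contains $0$.
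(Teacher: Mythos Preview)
Your overall architecture is sound: the reduction to the claim ``every algebraic closed ball contains a point algebraic over $\0$'' is correct, and the deduction from that claim via automorphism orbits and averaging works. The gaps are both in your proposed proof of the claim itself.

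First, the assertion that $\rv$ carries $\VF(\acl(\0))^{\times}$ onto $\RV(\acl(\0))^{\times}$ is not justified. The facts you cite about the residue field and value group of $\VF(\acl(\0))$ describe the image $\rv(\VF(\acl(\0))^{\times})$; that this image exhausts $\RV(\acl(\0))^{\times}$ is precisely the point at issue. In the paper's development this surjectivity (under the extra hypothesis that $S$ is $\VF$-generated) is Corollary~\ref{aclS:model}, whose proof passes through Lemma~\ref{algebraic:balls:definable:centers}, which in turn rests on the present Lemma~\ref{effectiveness}; so as written the argument is circular. And without the $\VF$-generation hypothesis the surjectivity can outright fail: if $\RV(S)$ contains an element whose value lies outside the divisible hull of $\vv(\VF(S)^{\times})$, then that element belongs to $\RV(\acl(\0))$ but has no $\rv$-preimage in $\VF(\acl(\0)) = \VF(S)^{alg}$.

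Second, the termination step is only a hope. The valuative centers increase strictly toward $\rad(\gb)$, but $\Gamma$ is dense, and the finiteness of the conjugates of $\gb$ says nothing about how many \emph{distinct} algebraic closed balls the sequence $\gb, \gb - a_0, \gb - a_0 - a_1, \ldots$ may visit; these are simply different algebraic balls, of which there are unboundedly many.

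The paper sidesteps both issues by arguing syntactically. Write $\bigcup \gB$ as the set defined by a quantifier-free $\phi(X)$ and list the polynomials $F_j(X) \in \VF(S)[X]$ occurring in $\phi$ inside terms $\rv(F_j(X))$. Using Lemma~\ref{approx:roots} one shows each closed $\gb_i$ must contain a root of some $F_j$: otherwise there is a $d$ just outside $\gb_i$ with $\rv(F_j(d)) = \rv(F_j(\gb_i))$ for every $j$, so $d$ satisfies the same disjunct of $\phi$ as some point of $\gb_i$, contradicting $d \notin \bigcup \gB$. Averaging the (finitely many) roots that land in $\gb_i$ gives the center. This produces in one stroke an explicit finite subset of $\VF(S)^{alg}$ inside each $\gb_i$, with no iteration and no appeal to the structure of $\acl(\0)$.
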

\begin{proof}
Let $\gb_1, \ldots, \gb_n$ be the closed balls in $\gB$. Without loss of generality we may assume $\rad \gb_i < \infty$ and $0 \notin \gb_i$ for each $\gb_i$, that is, each $\gb_i$ is an infinite subset and is properly contained in an $\rv$-ball. Let $\phi(X)$ be a disjunction of conjunctions of literals that defines $\bigcup \gB$. Note that $\phi(X)$ must contain an irredundant $\RV$-sort literal. Let $F_j(X)$ enumerate all polynomials in $\VF[X]$ that occur in $\phi(X)$ in the form $\rv(F_j(X))$.

We claim that each $\gb_i$ contains a root of some $F_j(X)$. To see this, let $R$ be the finite set of all roots of all $F_j(X)$ and suppose for contradiction that $\gb_1$ is disjoint from $R$. Since every $\gb_i$ is a closed ball, there is an open ball $\ga$ that contains $\gb_1$ and is disjoint from every $\gb_i$ with $i > 1$. By the proof of Lemma~\ref{approx:roots}, we choose $\rad(\ga)$ so large that, for every $d \in \ga \mi \gb$, $\rv(F_j(d)) = \rv(F_j(\gb_1))$ for every $j$. Since $\gb_1$ is an infinite subset, there is a $b \in \gb_1$ such that $b$ satisfies a disjunct $\phi^*$ of $\phi$ and $\phi^*$ lacks $\VF$-sort equality. Then there is a $d \in \ga \mi \gb$ also satisfies $\phi^*$,
contradiction.

Let $b_i$ be the average of $\gb_i \cap R$. Then $b_i \in \gb_i$ and the function given by $\gb_i \fun b_i$ is as required.
\end{proof}

\begin{lem}\label{rv:effectiveness}
If $t \in \RV$ has a definable proper subset then it has definable center.
\end{lem}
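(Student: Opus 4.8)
The plan is to combine quantifier elimination with the averaging property of open balls in residue characteristic $0$. Write $\gt = \rv^{-1}(t)$, which is an open ball, and let $A$ be a definable set with $\0 \subsetneq A \subsetneq \gt$. First I would note that $t$ is automatically definable, being the unique $s \in \RV$ with $A \sub \rv^{-1}(s)$; hence $\gt$ is definable. By Theorem~\ref{qe:acvf}, $A$ is defined by a quantifier-free $\lan{RV}$-formula $\phi(X)$ in a single $\VF$-variable $X$, and passing to one disjunct of a disjunctive normal form I may assume $\phi$ is a conjunction of $\VF$-literals and $\RV$-literals still defining a nonempty proper subset of $\gt$ (each disjunct defines a subset of $A$, hence a proper subset, and at least one is nonempty). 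The useful syntactic feature is that every occurrence of $X$ in $\phi$ lies either inside a $\VF$-literal $F(X) \, \Box \, 0$ or inside a subterm of the form $\rv(G(X))$.

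The crux is to show that some polynomial occurring in $\phi$ has a root in $\gt$. Arguing by contradiction, suppose none does. Then $\phi$ has no nontrivial $\VF$-equality $F(X) = 0$, since such an $F$ would vanish at every point of $A \sub \gt$. For every nonzero polynomial $G$ occurring inside a subterm $\rv(G(X))$, the ball $\gt$ contains no root of $G$, so $\rv \circ G$ is constant on $\gt$ --- this is exactly the item in the list of basic ball properties above which says $(\rv \circ f)(\ga)$ is a singleton whenever $\ga$ is contained in an $\rv$-ball and avoids the roots of $f$; the degenerate case $G \equiv 0$ gives the constant $\rv(0) = \infty$. Similarly, every polynomial appearing in a $\VF$-disequality is nonvanishing throughout $\gt$. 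Consequently the truth value of each literal of $\phi$ --- in particular the definedness and the value of each $\K$-term, which depend on $X$ only through such subterms $\rv(G(X))$ --- is the same at every point of $\gt$, so $\phi$ defines either $\0$ or $\gt$ within $\gt$, contradicting $\0 \subsetneq A \subsetneq \gt$.

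Granting the claim, the finite definable set $R$ of all roots of all polynomials occurring in $\phi$ meets $\gt$, so $R \cap \gt$ is a nonempty finite definable subset of $\gt$. Since $\gC$ has pure characteristic $0$, the average $d$ of $R \cap \gt$ lies in $\gt$ and is definable --- here I use the ball property that, in residue characteristic $0$, the average of any finite subset of an open ball is again in that ball --- and $d$ is the desired definable center. I expect the only real work to be bookkeeping: handling the degenerate cases (zero polynomials, trivially true or false literals) and checking carefully that the definedness of a $\K$-term factors through its subterms $\rv(G(X))$, so that the step ``$\phi$ has constant truth value on $\gt$'' is watertight; the geometric content is carried entirely by the two ball properties cited.
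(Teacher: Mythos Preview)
Your argument is correct. Both your route and the paper's end by averaging a finite definable subset of the $\rv$-ball in residue characteristic~$0$, but they locate that finite set differently. The paper first invokes $C$-minimality to extract the positive and negative boolean components of $A$, passes to their positive closures (a definable finite set of \emph{closed} subballs of $\rv^{-1}(t)$), and then applies Lemma~\ref{effectiveness} to produce centers of those closed balls; the average of these centers is the desired point. You instead bypass the ball decomposition and Lemma~\ref{effectiveness} entirely, arguing straight from the syntax of a quantifier-free defining formula that some occurring polynomial must have a root inside $\rv^{-1}(t)$ --- this is essentially the core step of the proof of Lemma~\ref{effectiveness}, inlined and applied to the open ball $\rv^{-1}(t)$ rather than to a closed subball. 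Your approach is shorter and self-contained for this particular lemma; the paper's approach is more modular, reusing the general machinery already set up for closed balls.
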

\begin{proof}
Let $A$ be a definable proper subset of $t$. Let $\gb_1, \ldots, \gb_n$ be the positive boolean components of $A$ and $\gh_1, \ldots, \gh_m$ the negative boolean components of $A$. Since $A$ is a proper subset of $t$, at least one of
these balls is a proper subball of $t$ and hence its positive closure is also a proper subball of $t$. If we consider the set of the positive closures of these balls that are contained in $t$ then, by Lemma~\ref{effectiveness}, we obtain a definable finite subset of $t$ and hence, by taking the average, a definable
point in $t$.
\end{proof}

If the substructure $S$ does not contain excessive information from the $\RV$-sort, for example, if $S$ is $(\VF, \Gamma)$-generated, then it is also possible to have centers for algebraic sets of open balls (although this is not needed in this paper). To show this, we need the following observation. Suppose that $S$ is $\VF$-generated. Let $\lbar \gamma \in \Gamma^n$, $X, X_1, \ldots, X_n$ $\VF$-sort variables, $Y_1, \ldots, Y_n$ $\RV$-sort variables, and $\phi(X, \lbar Y)$ a quantifier-free formula. Suppose that for each $\lbar t \in \lbar \gamma$ the formula $\phi(X, \lbar t)$ defines the same subset $A \sub \VF$. For each $\lbar t \in \lbar \gamma$, clearly $A$ may also be defined by the formula
\[
\fa{\lbar X} (\vv(\lbar X) = \vrv(\lbar t) \limplies \phi(X, \rv(\lbar X))).
\]
Since $S$ is $\VF$-generated, as in the proof of Theorem~\ref{c:min:acvf}, $\phi(X, \rv(\lbar X))$ may be translated into an $\lan{v}$-formula $\psi(X, \lbar X)$ and hence $A$ may be defined by the $\lan{v}$-formula
\[
\fa{\lbar X} (\vv(\lbar X) = \lbar \gamma \limplies \psi(X, \lbar X)).
\]
In short, if $S$ is $(\VF, \Gamma)$-generated then any definable set in $\lan{RV}$ is also definable in $\lan{v}$ from the same parameters.

\begin{lem}\label{algebraic:balls:definable:centers}
Suppose that $S$ is $(\VF, \Gamma)$-generated. Let $\gB$ be an algebraic set of balls. Then $\gB$ has centers.
\end{lem}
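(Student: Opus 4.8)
The plan is to reduce, by a definable partition of $\gB$ according to the type of ball, to the single case of an algebraic set of $\rv$-balls, and to dispose of that case using the hypothesis that $S$ is $(\VF,\Gamma)$-generated. Representing each $\gb\in\gB$ by a triple $(a,b,d)$ as in the text, the conditions ``$\gb$ is closed'', ``$\gb$ is an $\rv$-ball'' (that is, $d=1$ and $\vv(a)=\vv(b)$), ``$\gb$ is open and $0\in\gb$'', and ``$\gb$ is open, $0\notin\gb$ and $\vv(a)\ne\vv(b)$'' are all definable, so it is enough to exhibit centers on each of the four corresponding subfamilies and then glue. On the closed balls Lemma~\ref{effectiveness} applies verbatim, and on the open balls through $0$ the constant map $\gb\mapsto0$ is a set of centers; so two subfamilies remain.

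For an open $\ga$ with $0\notin\ga$ that is not an $\rv$-ball one has $\rad(\ga)>\vcr(\ga)$, hence its positive closure $\gc(\ga)$ is a closed ball with the same radius and valuative center as $\ga$ and is still contained in the unique $\rv$-ball containing $\ga$. Applying Lemma~\ref{effectiveness} to the algebraic set $\{\gc(\ga):\ga\}$ of closed balls produces a definable $p$ with $p(\ga)\in\gc(\ga)$. If $p(\ga)\in\ga$, this is a center of $\ga$; otherwise $\vv(p(\ga)-a)=\rad(\ga)$ for every $a\in\ga$, and then $\ga-p(\ga)$ is an $\rv$-ball (its valuative center equals its radius). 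Thus $\{\ga-p(\ga):p(\ga)\notin\ga\}$ is again a definable algebraic set of $\rv$-balls, and from centers for it we recover centers for the $\ga$'s by translating back.

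It remains to treat an algebraic set $\{\rv^{-1}(t):t\in T\}$ of $\rv$-balls, where $T\sub\RV^\times$ is finite and definable, hence $T\sub\acl(S)$; it suffices to build a definable $h:T\fun\VF$ with $\rv\circ h=\id_T$. The key claim is that every $t\in\acl(S)\cap\RV^\times$ has an $\rv$-preimage lying in $\VF(\acl(S))$. To see it: $\vrv(t)\in\acl(S)\cap\Gamma$, and a type computation built on Theorem~\ref{qe:acvf} --- here using that $S$ is $(\VF,\Gamma)$-generated, so that its only $\RV^\times$-elements are those of $\rv(\VF(S)^\times)$ --- shows that if $\vrv(t)$ were not in the divisible hull of $\vv(\VF(S)^\times)$ then all of $\vrv^{-1}(\vrv(t))$ would have one and the same type over $S$, a single infinite orbit disjoint from $\acl(S)$; so $\vrv(t)=\vv(b_0)$ for some $b_0\in\VF(\acl(S))$, whose value group, being algebraically closed, equals that divisible hull. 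Then $t/\rv(b_0)$ is an element of $\K^\times$ algebraic over $\K(S)$, hence lies in $\K(\acl(S))$, and it lifts to some $c\in\VF(\acl(S))$ because the residue map of the algebraically closed valued field $\VF(\acl(S))$ is onto; now $b_0c$ is the required preimage. By compactness there is therefore an $S$-definable $\psi(y,x)$ with $\psi(t,\VF)$ finite, nonempty and contained in $\rv^{-1}(t)$ for each $t\in T$; and since $\cha(\K)=0$, the average of finitely many elements of the open ball $\rv^{-1}(t)$ again lies in $\rv^{-1}(t)$, so taking $h(t)$ to be that average completes the proof.

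The main obstacle is the key claim, that is, the computation of $\acl(S)\cap\RV^\times$ (and $\dcl(S)\cap\RV^\times$) over a $(\VF,\Gamma)$-generated $S$. Both hypotheses of the lemma are used precisely here: were $S$ allowed to contain a ``free'' $\RV$-element $t$, then $\rv^{-1}(t)$ would be a definable $\rv$-ball with no definable $\VF$-point whatsoever, and the assumption $\cha(\K)=0$ is exactly what legitimizes the final averaging, just as in the proof of Lemma~\ref{effectiveness}.
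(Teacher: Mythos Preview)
Your argument works but follows a different path from the paper's. The paper exploits the remark immediately preceding the lemma: since $S$ is $(\VF,\Gamma)$-generated, every $S$-definable subset of $\VF$ is already definable by a quantifier-free $\lan{v}$-formula over the same parameters. After reducing to open balls each contained in an $\rv$-ball, the paper simply reruns the proof of Lemma~\ref{effectiveness} with this $\lan{v}$-formula in hand, invoking the open-ball clause of Lemma~\ref{approx:roots} (which matches $\vv(G_i(d))$ rather than $\rv(G_i(d))$, exactly what $\Gamma$-sort literals require); this forces each open ball to contain a root of some polynomial occurring in the formula, and averaging finishes as before. Your route instead reduces further to $\rv$-balls via positive closures and translation, and then analyzes $\acl(S)\cap\RV^\times$ directly --- effectively proving the key step of Corollary~\ref{aclS:model} ahead of time. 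This is more conceptual, but it costs two type computations rather than one syntactic translation: you sketch the first (showing $\vrv(t)$ lands in the divisible hull of $\vv(\VF(S)^\times)$), yet the second --- that $\K^\times\cap\acl(S)$ is contained in the field-theoretic algebraic closure of $\K(S)$, which you need to lift $t/\rv(b_0)$ --- is asserted without proof and requires a parallel argument (that $\Gamma$-parameters add nothing to the induced structure on $\K$). The paper's approach is shorter and stays entirely within results already established; yours explains at the level of $\acl$ why the hypothesis on $S$ is the right one.
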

\begin{proof}
Since the set of the closed balls in $\gB$ is definable, by Lemma~\ref{effectiveness}, we may assume that $\gB$ is an algebraic set of open balls, say, $\gb_1, \ldots, \gb_n$. As in Lemma~\ref{effectiveness}, we may also assume that each $\gb_i$ is contained in an $\rv$-ball (but perhaps not properly). Let $\phi(X)$ be a quantifier-free $\lan{v}$-formula that defines $\bigcup \gB$. Note that $\phi(X)$ must contain an irredundant $\Gamma$-sort literal. Now we may proceed exactly as in Lemma~\ref{effectiveness}, using the other part of Lemma~\ref{approx:roots}.
\end{proof}

\begin{cor}\label{aclS:model}
Suppose that $S$ is $\VF$-generated. Let $\acl(S)$ be the model-theoretic algabraic closure of $S$. If the value group $\Gamma(\acl(S))$ is nontrivial then $\acl(S)$ is a model of $\ACVF_S(0,0)$.
\end{cor}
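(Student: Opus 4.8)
The plan is to show that $N:=\acl(S)$, viewed as an $\lan{RV}$-substructure of $\gC$, already satisfies the axioms of $\ACVF$; since $\ACVF_S$ is complete (by quantifier elimination, Theorem~\ref{qe:acvf}) and $S\sub N$, this forces $N\esub\gC$ and hence $N\models\ACVF_S(0,0)$. Most of the axioms are cheap. The field $\VF(N)$ is the relative field-theoretic algebraic closure of $\VF(S)$ in $\VF(\gC)$ --- any element algebraic over $\VF(S)$ as a field element is model-algebraic over $S$, and $\acl(\acl(S))=\acl(S)$ --- so it is algebraically closed, and likewise $\K(N)$ is algebraically closed. Divisibility of $\RV^\times(N)$ and of $\Gamma(N)$ holds because, in characteristic $0$, the $n$th roots of a given $t\in\RV^\times(N)$ form a finite $S$-algebraic set (they lie in a single $\K^\times$-coset and differ by $n$th roots of unity) and $\gamma/n$ is the unique $n$-divisor of $\gamma$ in $\Gamma$; hence all of them lie in $N$. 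The ordering axioms and the clause that $\vv=\vrv\circ\rv$ is a valuation with valuation ring $\rv^{-1}(\RV^{\ge 1})$ and maximal ideal $\rv^{-1}(\RV^{> 1})$ are universally quantified, so they descend from $\gC$ to $N$.

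The one substantive axiom to check is that $\rv$ maps $\VF^\times(N)$ onto $\RV^\times(N)$. Here I would use that $S$ is $\VF$-generated, so $S=\la\VF(S)\ra$ and $\K(S),\Gamma(S)$ are genuinely the residue field and the value group of the valued field $(\VF(S),\OO(S))$; the hypothesis that $\Gamma(N)$ is nontrivial says precisely that this valued field is nontrivially valued, since (by $o$-minimality of $\Gamma$ and quantifier elimination) $\Gamma(N)$ is the divisible hull of $\Gamma(S)$ and $\K(N)$ is the algebraic closure of $\K(S)$. Because $\VF(N)=\overline{\VF(S)}$ is an \emph{algebraic} extension of $\VF(S)$, standard valuation theory of algebraic extensions identifies the residue field and the value group of $(\VF(N),\OO(N))$ with $\overline{\K(S)}=\K(N)$ and with the divisible hull of $\Gamma(S)=\Gamma(N)$. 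Now take $t\in\RV^\times(N)$ and set $\gamma=\vrv(t)$. Since $\gamma$ lies in the value group of $\VF(N)$, choose $a\in\VF^\times(N)$ with $\vv(a)=\gamma$; then $t/\rv(a)\in\K^\times(N)$ lies in the residue field of $\VF(N)$, so it is a root of some polynomial with coefficients in $\K(S)$. Lifting those coefficients to $\OO(S)\sub\VF(S)$ and applying Lemma~\ref{rv:root:lift:vf} produces $b\in\VF(N)$ with $\rv(b)=t/\rv(a)$, whence $t=\rv(ab)$ with $ab\in\VF(N)$.

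The main obstacle is exactly this last step: passing from the easy, universally quantified axioms to the genuinely existential statement that $\rv$ is onto $\RV^\times(N)$. This is where one pays for $N$ being only $\VF$-generated rather than a model, and it is here that the nontriviality of $\Gamma(N)$ enters; I expect it to go through smoothly via the valuation-theoretic description of $\overline{\VF(S)}$ together with the root-lifting Lemma~\ref{rv:root:lift:vf}. Alternatively, one could argue more in the spirit of the preceding lemmas: the finitely many $S$-conjugates of $t$ form an $S$-definable algebraic set of $\rv$-balls, which --- using a positive element of $\Gamma(N)$ --- can be replaced by an $S$-definable algebraic set of closed subballs, to which Lemma~\ref{effectiveness} applies to furnish centers in $N$; this parallels the way Lemma~\ref{rv:effectiveness} and Lemma~\ref{algebraic:balls:definable:centers} are used elsewhere.
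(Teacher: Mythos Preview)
Your main argument is correct but takes a longer route than the paper. The paper's proof is one sentence: the only substantive axiom is surjectivity of $\rv$, and for $t\in\RV(\acl(S))$ the finitely many $S$-conjugates of $t$ give an $S$-definable algebraic set of open ($\rv$-)balls, to which Lemma~\ref{algebraic:balls:definable:centers} applies directly to produce centers in $\VF(\acl(S))$. Your valuation-theoretic approach --- computing the residue field and value group of $\overline{\VF(S)}$ and lifting via Lemma~\ref{rv:root:lift:vf} --- works, but note that you over-state some identifications: you assert $\VF(N)=\overline{\VF(S)}$ (model-algebraic $\Rightarrow$ field-algebraic) without proof, yet your argument only uses the easy inclusion $\overline{\VF(S)}\subseteq\VF(N)$, which suffices. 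Likewise your claims $\K(N)=\overline{\K(S)}$ and $\Gamma(N)=\text{divisible hull of }\Gamma(S)$ rely on stable-embeddedness facts about $\K$ and $\Gamma$ not established in the paper; these are true, but the paper's route avoids them entirely by invoking the lemma it has just proved for exactly this purpose.

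Your alternative sketch at the end is morally the paper's argument, but the specific step you propose --- ``replace the $\rv$-balls by an $S$-definable algebraic set of closed subballs using a positive element of $\Gamma(N)$'' --- does not work: an $\rv$-ball contains infinitely many closed subballs of any prescribed radius, and you cannot single out one per $\rv$-ball without already having a center. The fix is not to detour through Lemma~\ref{effectiveness} but to apply Lemma~\ref{algebraic:balls:definable:centers} directly to the open $\rv$-balls themselves; that lemma was set up precisely to handle open balls over $(\VF,\Gamma)$-generated parameters, and this is what the paper does.
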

\begin{proof}
We only need to show that any $t \in \RV(\acl(S))$ has a point in $\VF(\acl(S))$, which follows from Lemma~\ref{algebraic:balls:definable:centers}.
\end{proof}

\begin{defn}
Let $A \sub \VF^n \times \RV^m$ and
\[
\can(A) = \set{(\lbar a, \rv(\lbar a), \lbar t) : (\lbar a, \lbar t) \in A}
\sub \VF^n \times \RV^{n+m}.
\]
Clearly $A$ is bijective to $\can(A)$ in a canonical way. This bijection is called the \emph{canonical bijection} and is denoted by $\can$.
\end{defn}

\begin{conv}\label{convention:can:map}
In the discussion below it is very convenient to identify a definable subset $A$ with its canonical image $\can(A)$. Whether or not such an identification is made will always be clear in context. For example, in Definition~\ref{defn:special:bijection} below, it would not make sense without substituting $\can(A)$ for $A$.
\end{conv}

For any definable subset $A$, both the subset of $A$ that contains
all the $\rv$-polydiscs contained in $X$ and the superset of $A$
that contains all the $\rv$-polydiscs with nonempty intersection
with $A$ are definable.

\begin{defn}\label{defn:RV:product}
For any subset $U \sub \VF^n \times \RV^m$, the \emph{$\RV$-hull}
of $U$, denoted by $\RVH(U)$, is the subset $\bigcup\set{\rv^{-1}(\lbar t) \times \set{\lbar s} : (\lbar t, \lbar s) \in \rv(U)}$. If $U = \RVH(U)$, that is, if $U$ is a union of $\rv$-polydiscs, then we say that $U$ is an
\emph{$\RV$-pullback}.
\end{defn}

\begin{defn}\label{defn:special:bijection}
Let $A \sub \VF \times \RV^m$ and $C \sub \RVH(A)$ an $\RV$-pullback. Let $\pr_{>1}(C \cap A)$ be the projection of $C \cap A$ to the coordinates other than the first one. Let $\lambda: \pr_{>1}(C \cap A) \fun \VF$ be a function
such that $(\lambda(\lbar t), \lbar t) \in C$ for every $\lbar t \in \pr_{>1}(C \cap A)$. Let
\begin{gather*}
C^{\sharp} = \bigcup_{(t_1, \lbar t_1) \in \pr_{>1} C} \bigl( \bigl(\bigcup \set{\rv^{-1}(t): \vrv(t) > \vrv(t_1)}\bigr) \times \set{(t_1, \lbar t_1)} \bigr),\\
\RVH(A)^{\sharp} = C^{\sharp} \uplus (\RVH(A) \mi C).
\end{gather*}
The \emph{centripetal transformation $\eta : A \fun \RVH(A)^{\sharp}$ with respect to $\lambda$} is defined by
\[
\begin{cases}
  \eta (a, \lbar t) = (a - \lambda(\lbar t), \lbar t), & \text{on } C \cap A,\\
  \eta = \id, & \text{on } A \mi C.
\end{cases}
\]
Note that $\eta$ is
injective. The inverse of $\eta$ is naturally called the \emph{centrifugal transformation with respect to $\lambda$}. The function $\lambda$ is called a \emph{focus map of $A$}. The $\RV$-pullback $C$ is
called the \emph{locus} of $\lambda$. A \emph{special bijection}
$T$ is an alternating composition of centripetal transformations
and the canonical bijection. The \emph{length} of a special
bijection $T$, denoted by $\lh T$, is the number of centripetal
transformations in $T$. The image $T(A)$ is sometimes denoted as $A^{\sharp}$.
\end{defn}

Clearly if $A$ is an $\RV$-pullback and $T$ is a special bijection on $A$ then $T(A)$ is an $\RV$-pullback. Notice that a special bijection $T$ on $A$ is
definable if $A$ and all the focus maps involved are definable. Since we are only interested in definable subsets and definable functions on them, we further require a special bijection to be definable.

Special transformations are an important ingredient in the Hrushovski-Kazhdan integrations theory~\cite{hrushovski:kazhdan:integration:vf}. Definition~\ref{defn:special:bijection} is a specialized version that only involves one $\VF$-coordinate. Its general version (in all dimensions) will be studied in a sequel (also see~\cite[Section~7]{yin:hk:part:1}). Here we give a couple of examples.

\begin{exam}\label{example:special:bi}
Let $\gb \sub \VF$ be a definable open ball properly contained in a $t \in \RV$. By Convention~\ref{convention:can:map}, $\gb$ is identified with the subset $\gb \times \set{t}$. By Lemma~\ref{rv:effectiveness}, $t$ contains a definable element $a$, which may or may not be in $\gb$. Let $\lambda$ be the focus map $t \efun a$. Then the centripetal transformation on $\gb$ with respect to $\lambda$ is given by $(b, t) \efun (b - a, t)$.

Let $t_1, \ldots, t_n \in \RV$, $A_i \sub t_i$ a finite subset for each $i$, and $A = \bigcup_i (A_i \times {t_i})$. Suppose that $A$ is definable. Let $a_i$ be the average of $A_i$ and $\lambda$ the focus map given by $t_i \efun a_i$. Then the centripetal transformation $\eta$ on $A$ with respect to $\lambda$ is given by $(a, t_i) \efun (a - a_i, t_i)$. The special transformation $\can \circ \eta$ on $A$ is given by $(a, t_i) \efun (a - a_i, \rv(a - a_i), t_i)$. Notice that for any $i$, by Lemma~\ref{average:0:rv:not:constant}, $\rv$ is not constant on the subset $A_i - a_i$ and hence for any $s \in \RV$ the size of $\set{a : (a, s, t_i) \in \can \circ \eta(A)}$ is strictly smaller than the size of $A_i$. This phenomenon is the basis of the inductive arguments below.
\end{exam}

\begin{defn}
A definable subset $A$ is a \emph{deformed $\RV$-pullback} if there is a
special bijection $T$ such that $T(A)$ is an $\RV$-pullback.
\end{defn}

\begin{rem}\label{preimage:of:special}
Let $A$ be a deformed $\RV$-pullback and $T : A \fun U$ a special bijection that witnesses this. By a routine induction we see that
\begin{enumerate}
  \item if $(0, \infty, \lbar t) \in U$ then $T^{-1}(0, \infty, \lbar t)$ is a singleton,
  \item if $\rv^{-1}(s) \times \set{(s, \lbar t)} \in U$ then $T^{-1}(\rv^{-1}(s) \times \set{(s, \lbar t)})$ is an open polydisc.
\end{enumerate}
\end{rem}

Here is our key lemma:

\begin{lem}\label{simplex:with:hole:rvproduct}
Every definable subset $A \sub \VF \times \RV^m$ is a deformed $\RV$-pullback.
\end{lem}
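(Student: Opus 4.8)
The plan is to reduce, via the canonical bijection $\can$, to the case where $A \subseteq \VF \times \RV^m$ and the $\VF$-coordinate $a$ always comes equipped with $\rv(a)$ in the $\RV$-part; by Convention~\ref{convention:can:map} we work with $\can(A)$ throughout. Fibering over $\lbar t \in \RV^m$ and using compactness, it suffices to treat a single definable subset $X = A_{\lbar t} \subseteq \VF$ uniformly in the parameter $\lbar t$. By Theorem~\ref{c:min:acvf} (\cmin-minimality) each such $X$ is a boolean combination of balls, hence a finite union of disjoint balls and punctured balls with well-defined radii and valuative centers; the positive and negative boolean components are definable from the same parameters (see the Remark following Definition of positive closure). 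The idea is to peel off these components one finite ``radius level'' at a time using a centripetal transformation, and to run an induction on a suitable complexity measure of $X$ — roughly, the number of distinct finite radii occurring among the boolean components of $X$, together with (at the top level) the number of balls sharing the coarsest finite radius.

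The inductive step proceeds as follows. Consider the boolean components of $X$ of largest finite radius $\gamma$ (if $X$ is already an $\RV$-pullback, i.e.\ a union of $\rv$-polydiscs, we are done). These group into finitely many simplexes, each sitting inside a single closed ball; by Lemma~\ref{rv:effectiveness} (for a single residue coset) and Lemma~\ref{effectiveness}/Lemma~\ref{algebraic:balls:definable:centers} (for an algebraic set of closed balls), the relevant balls have definable centers, so there is a definable focus map $\lambda$ defined on the appropriate $\RV$-pullback locus $C$, sending each coset to a center of the corresponding ball. Apply the centripetal transformation $\eta$ with respect to $\lambda$ followed by $\can$. After this special transformation, each ball that was punctured or offset is re-centered at $0$ and thus becomes, on its own level, a union of $\rv$-polydiscs in the new coordinates; the key point — exactly the phenomenon highlighted in Example~\ref{example:special:bi}, which rests on Lemma~\ref{average:0:rv:not:constant} and hence on $\cha\K = 0$ — is that $\rv$ is not constant on a translate of a finite set by its average, so the pieces that do not yet become $\rv$-pullbacks have \emph{strictly smaller} complexity in the sense of the induction measure. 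By compactness this can be done uniformly in $\lbar t$, giving a single definable special bijection on all of $A$.

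The main obstacle, and the place to be careful, is bookkeeping: one must choose the complexity measure so that a single centripetal transformation genuinely decreases it, even though applying $\eta$ can introduce new $\VF$-coordinates of value $0$ whose further structure still needs to be resolved. The fix is to do the induction on the pair (number of distinct finite radii of boolean components, lexicographically) $\times$ (for fixed top radius, the maximal number of nested holes / the size of the largest finite set of offsets at that radius), observing — as in Example~\ref{example:special:bi} — that $\can\circ\eta$ splits each finite fiber $A_i - a_i$ into residue cosets each strictly smaller than $A_i$, and that re-centering a ball at $0$ converts its top level into honest $\rv$-polydiscs without creating components of larger finite radius. Uniformity in the auxiliary parameters $\lbar t$ is then routine by compactness, and the induction terminates because the complexity measure is a well-order.
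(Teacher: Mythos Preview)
Your outline assembles the right tools --- compactness, $C$-minimality for the fiber decomposition, centers of closed balls via Lemma~\ref{effectiveness}, and the averaging trick from Lemma~\ref{average:0:rv:not:constant} to split multiple balls across residue cosets --- and these are exactly what the paper uses. The genuine gap is the induction measure.

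The measure ``number of distinct finite radii of boolean components'' does not decrease under one application of $\can\circ\eta$. After translating by a center $c$ and applying $\can$, each new fiber over $s$ is a subset of the $\rv$-ball $\rv^{-1}(s)$; when that fiber equals $\rv^{-1}(s)$ minus some holes, $\rv^{-1}(s)$ itself is the positive boolean component, contributing a new finite radius $\vrv(s)$ to your count. Worse, when there are several closed balls $\gc_1,\ldots,\gc_k$ at the outermost level $\gamma_1$ and you translate by the average $c$ of their centers, each $\gc_i - c$ with $c \notin \gc_i$ lands inside an $\rv$-ball of radius $\vv(c_i-c) < \gamma_1$, so the fiber over such an $s$ now has boolean components at radii $\vrv(s),\gamma_1,\gamma_2,\ldots$ --- one more than before. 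You can repair this by only counting radii of components that are \emph{proper} subballs of the ambient $\rv$-ball, but then you still owe a separate argument that the secondary count drops, and you have only pointed at Example~\ref{example:special:bi}. The paper sidesteps this bookkeeping by arguing pointwise: for a fixed $(a,\lbar t)\in A$ it produces a special bijection $T$ landing that single point inside an $\rv$-polydisc contained in $T(A)$, then invokes compactness at the end. Its induction is on the pair $(n,k)$ where $n$ is the number of holes of the simplex $\gs\ni a$ and $k$ is the size of the positive closure of $\gs$ (respectively, of the set of holes in the inductive step on $n$); after one $\can\circ\eta$ translating by the average of the centers, either $c$ lies in the relevant closed ball (and one is done for that point) or Lemma~\ref{average:0:rv:not:constant} guarantees the new fiber has strictly fewer than $k$ closed balls in its positive closure.

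A side remark: drop the citation of Lemma~\ref{algebraic:balls:definable:centers}. That lemma needs $S$ to be $(\VF,\Gamma)$-generated, which is not assumed here; since you pass to the positive closure anyway, only closed balls are at stake and Lemma~\ref{effectiveness} alone suffices.
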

\begin{proof}
By compactness, it is enough to show that, for every $(a, \lbar t) \in A$, there is a special bijection $T$ on $A$ such that $T(a, \lbar t)$ is contained in an $\rv$-polydisc $\gp \sub T(A)$. Fix an $(a, \lbar t) = (a, t_1, \ldots, t_m) \in A$. Let $B$ be the union of the $\rv$-polydiscs contained in $A$, which is a definable $\RV$-pullback. If $(a, \lbar t) \in B$ then the canonical bijection is as required. So, without loss of generality, we may assume that $B = \0$. By Convention~\ref{convention:can:map}, the canonical bijection has been applied to $A$ and hence the $\lbar t$-definable subset $\fib(A, \lbar t) = \set{b : (b, \lbar t) \in A}$ is properly contained in the $\rv$-ball $\rv^{-1}(t_1)$.

By \cmin-minimality, $\fib(A, \lbar t)$ is a disjoint union of $\lbar t$-definable simplexes. Let $\gs$ be the simplex that contains
$a$. Let $\gb_1, \ldots, \gb_l$, $\gh_1, \ldots, \gh_n$
be the boolean components of $\gs$, where each $\gb_i$ is positive
and each $\gh_i$ is negative. The proof now proceeds by induction
on $n$.

For the base case $n = 0$, $\gs$ is a disjoint union of balls $\gb_1, \ldots, \gb_l$ of the same radius and valuative center. Without loss of generality, we may assume $a \in \gb_1$. Let $\set{\gc_1, \ldots, \gc_k}$ be the positive closure of $\gs$. Note that this closure is also $\lbar t$-definable. We now
start a secondary induction on $k$. For the base case $k =1$, by
Lemma~\ref{effectiveness}, there is a $\lbar t$-definable
point $c \in \gc_1$. Clearly $\gc_1 - c \sub \rv^{-1}(t_1) - c$
is a union of $\rv$-balls. We see that there is a definable $C \sub \RVH(A)$ and a focus map $\lambda: \pr_{>1}(C \cap A) \fun \VF$ such that $\lambda(\lbar t) = c$. Then the centripetal transformation $\eta$ with respect to $\lambda$ is
as desired. For the inductive step of the secondary induction, by
Lemma~\ref{effectiveness} again, there is a $\lbar t$-definable set of centers $\set{c_1, \ldots, c_k}$ with $c_i \in \gc_i$. Let $c$ be the average of $c_1, \ldots, c_k$. Let $\lambda$, $\eta$ be as above such that $\lambda(\lbar t) = c$. If $c \in \gc_1$ then, as above, the centripetal transformation
$\eta$ with respect to $\lambda$ is as desired. So suppose $c \notin \gc_1$. Note that if $\vv$ is not constant on the set $\set{c_1 - c, \ldots, c_k - c}$ then $\rv$ is not constant on it and if $\vv$ is constant on it then, by
Lemma~\ref{average:0:rv:not:constant}, $\rv$ is still not constant
on it. Consider the special bijection $T = \can \circ \eta$. We have
\[
T(a, \lbar t) = (a - c, r, \lbar t) \in T(A),
\]
where $r = \rv(a-c)$. Observe that the positive closure of the $(r, \lbar t)$-definable
subset
\[
\fib(T(A), (r, \lbar t)) = \set{b : (b, r, \lbar t) \in T(A)}
\]
is a proper subset of the set $\set{\gc_1 - c, \ldots, \gc_k - c}$ of closed balls. Hence, by the inductive hypothesis, there is a special bijection $T'$ on
$T(A)$ such that $T'(a - c, r, \lbar t)$ is contained in an
$\rv$-polydisc $\gp \sub T' \circ T(A)$. So $T' \circ T$ is as required. This completes the base case $n=0$.

We proceed to the inductive step. Note that, since $\gb_1, \ldots,
\gb_l$ are of the same radius and are pairwise disjoint, the holes $\gh_1, \ldots, \gh_n$ are also pairwise disjoint. Without loss of generality we may also assume
that all the holes $\gh_1, \ldots, \gh_n$ are of the same radius.
Let $\set{\gc_1, \ldots, \gc_k}$ be the positive closure of
$\bigcup_i \gh_i$. The secondary induction on $k$ above may be
carried out here almost verbatim with respect to $\set{\gc_1, \ldots, \gc_k}$: the point is, in the inductive step, after applying the special bijection $T$, the number of holes in the fiber that contains $T(a, \lbar t)$ decreases
and hence the inductive hypothesis may be applied.
\end{proof}

\begin{cor}\label{b:minimal:con:13}
Let $A$, $B \sub \VF$ and $f : A \fun B$ a definable surjective function. Then there is a definable function $P : A \fun \RV^m$ such that, for each $\lbar t \in \ran(P)$, $P^{-1}(\lbar t)$ is an open ball or a point and $f \rest P^{-1}(\lbar t)$ is either
constant or injective.
\end{cor}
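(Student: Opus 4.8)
The plan is to combine the two main lemmas of this subsection. Lemma~\ref{function:dim1:decom:RV} already produces a definable $P_0 : A \fun \RV^{m_0}$ along whose fibers $f$ is constant or injective, but those fibers need not be balls or points; Lemma~\ref{simplex:with:hole:rvproduct} lets us subdivide any definable subset of $\VF \times \RV^{k}$ into $\rv$-polydiscs and points by a definable special bijection. So I would pass to the graph of $P_0$, apply Lemma~\ref{simplex:with:hole:rvproduct} there, and read off the desired $P$ from the resulting $\rv$-polydiscs, the point being to check that this refinement of the $P_0$-partition does not destroy the constant/injective behaviour of $f$.

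Concretely, I would first apply Lemma~\ref{function:dim1:decom:RV} to obtain a definable $P_0 : A \fun \RV^{m_0}$ with $f \rest P_0^{-1}(\lbar u)$ constant or injective for every $\lbar u \in \ran(P_0)$. Next I would form the graph $\gG = \set{(a, P_0(a)) : a \in A} \sub \VF \times \RV^{m_0}$; it is definable, and the projection $\pi : \gG \fun A$ onto the $\VF$-coordinate is a definable bijection. By Lemma~\ref{simplex:with:hole:rvproduct} there is a definable special bijection $T : \gG \fun U$ with $U \sub \VF \times \RV^{m_1}$ an $\RV$-pullback. I would then set $g = T \circ \pi^{-1} : A \fun U$ and let $P : A \fun \RV^{m_1}$ be $g$ followed by the projection of $U$ onto its $\RV$-coordinates. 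As reflected in the notation of Remark~\ref{preimage:of:special}, the first of these $\RV$-coordinates records $\rv$ of the $\VF$-coordinate, so for $\lbar s \in \ran(P)$ with first coordinate $t$ the set $\rv^{-1}(t) \times \set{\lbar s}$ is an $\rv$-polydisc component of $U$ and $P^{-1}(\lbar s) = \pi\bigl(T^{-1}(\rv^{-1}(t) \times \set{\lbar s})\bigr)$.

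It then remains to verify the two conclusions. By Remark~\ref{preimage:of:special}, $T^{-1}(\rv^{-1}(t) \times \set{\lbar s})$ is a singleton when $t = \infty$ and an open polydisc otherwise; with a single $\VF$-coordinate this open polydisc has the form $\gb \times \set{\lbar t}$ for an open ball $\gb$, and its $\pi$-image is $\gb$. Hence every nonempty fiber of $P$ is an open ball or a point. Moreover, since $\gG$ is a graph, $\gb \times \set{\lbar t} \sub \gG$ forces $P_0 \equiv \lbar t$ on $\gb$, whence $P^{-1}(\lbar s) \sub P_0^{-1}(\lbar t)$ (and trivially so in the singleton case); so $f \rest P^{-1}(\lbar s)$ is a restriction of a constant or injective map and is therefore constant or injective, which is exactly what is asserted.

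I expect the only delicate point to be the bookkeeping around Remark~\ref{preimage:of:special} and Convention~\ref{convention:can:map}: one has to be sure that after the special bijection each point of $U$ carries the $\rv$-value of its $\VF$-coordinate as its first $\RV$-coordinate, so that the $\rv$-polydiscs of $U$ have exactly the shape to which Remark~\ref{preimage:of:special} applies, and that $T^{-1}$ of such a polydisc genuinely is an open polydisc \emph{inside $\gG$}, so that the graph relation can be used to conclude that $P$ refines $P_0$. The remaining compactness and definability checks are routine.
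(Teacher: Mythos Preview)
Your proposal is correct and follows essentially the same approach as the paper: obtain $P_0$ from Lemma~\ref{function:dim1:decom:RV}, then refine via Lemma~\ref{simplex:with:hole:rvproduct} and read off the ball/point structure of fibers from Remark~\ref{preimage:of:special}. The only cosmetic difference is that the paper applies Lemma~\ref{simplex:with:hole:rvproduct} fiberwise to each $P_0^{-1}(\lbar t)$ and then combines the resulting $\lbar t$-definable maps into a single $P$, whereas you apply it once to the graph of $P_0$; since the graph is precisely the disjoint union of the tagged fibers, the two presentations amount to the same argument.
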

\begin{proof}
Let $P_0 : A \fun \RV^l$ be a function as given by Lemma~\ref{function:dim1:decom:RV}. Applying Lemma~\ref{simplex:with:hole:rvproduct} to each fiber $P_0^{-1}(\lbar t)$ we get a $\lbar t$-definable special bijection $T_{\lbar t}$. Let $P_{\lbar t}$ be the composition of $T_{\lbar t}$ and the projection to the $\RV$-coordinates. Then, by Remark~\ref{preimage:of:special}, the function $P : A \fun \RV^m$ given by
\[
a \efun (P_0(a), P_{P_0(a)}(a))
\]
is as required.
\end{proof}

\begin{thm}\label{b:min:acvf}
The theory $\ACVF_S(0,0)$ is \bmin-minimal.
\end{thm}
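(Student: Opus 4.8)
The plan is to verify the three conditions (b1), (b2), (b3) of Definition~\ref{def:b:min} for an arbitrary set of parameters $T$, an arbitrary $T$-definable $A \sub \VF$, and an arbitrary $A$-definable $f : A \fun \VF$. Since we are working inside the complete theory $\ACVF_S(0,0)$, adding $T$ to $S$ changes nothing essential, so we may absorb $T$ into $S$ and treat $A$ and $f$ as $\0$-definable; this is exactly the convention adopted at the start of the section, and it lets us quote Lemmas~\ref{finite:VF:project:RV} through~\ref{simplex:with:hole:rvproduct} and Corollary~\ref{b:minimal:con:13} verbatim. The first step is to observe that all the work has effectively been done: Corollary~\ref{b:minimal:con:13} hands us a definable $P : A \fun \RV^m$ whose fibers are points or open balls and along each of which $f$ is constant or injective. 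This $P$ is a witness for (b1) (auxiliary sort $U = \RV^m$, fibers points or balls) and simultaneously for (b3) (restrictions of $f$ to fibers are constant or injective). So only (b2) remains to be argued separately.

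For (b2), I would show that no definable function $g$ from an auxiliary subset $C \sub \RV^k$ to a ball $\gb \sub \VF$ of radius $< \infty$ is surjective. The key input is Lemma~\ref{function:rv:to:vf:finite:image}: the image of any definable function from a subset of $\RV^k$ into $\VF^n$ is finite. Hence $g(C)$ is a finite subset of $\gb$. But a ball of radius $< \infty$ is infinite (in a sufficiently saturated model, and more to the point, as an abstract consequence of the axioms: the residue field is algebraically closed, hence infinite, so any $\rv$-ball contains infinitely many cosets of the maximal ideal, and any ball of finite radius contains infinitely many $\rv$-balls or at least has infinite residue structure along it). Therefore $g$ cannot be onto $\gb$, which is (b2). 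One should double-check the degenerate case where $\gb$ is a single point of radius $\infty$ — but (b2) explicitly restricts to radius $< \infty$, so that case is excluded by hypothesis.

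Finally, since an arbitrary model of $\ACVF_S(0,0)$ is elementarily equivalent to the saturated model $\gC$ in which we have been working, and $b$-minimality as formulated in Definition~\ref{def:b:min} is a property expressible over all models of the theory, the conclusions transfer: every model of $\ACVF_S(0,0)$ is $b$-minimal, hence the theory is $b$-minimal. I do not expect any genuine obstacle here — the substance of the argument is entirely contained in Lemma~\ref{simplex:with:hole:rvproduct} (via Corollary~\ref{b:minimal:con:13}) and Lemma~\ref{function:rv:to:vf:finite:image}, both already proved. The only point requiring a moment's care is making sure that (b1) and (b3) are being witnessed by the \emph{same} style of object the definition demands (an $A$-definable map to an auxiliary sort) and that the auxiliary sort $\RV^m$ is admissible in the sense of Definition~\ref{def:b:min} — which it is, $\RV$ being a sort of $\lan{RV}$ other than $\VF$.
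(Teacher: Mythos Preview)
Your proposal is correct and follows essentially the same route as the paper: (b1) and (b3) are read off directly from Corollary~\ref{b:minimal:con:13}, and (b2) from Lemma~\ref{function:rv:to:vf:finite:image}. Your additional remarks about infinite balls and transfer to arbitrary models are accurate but more than the paper bothers to spell out.
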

\begin{proof}
For the three conditions in Definition~\ref{def:b:min}, (b2) is given by Lemma~\ref{function:rv:to:vf:finite:image} and (b1), (b3) follow from Corollary~\ref{b:minimal:con:13}.
\end{proof}

\section{Minimality in $\ACVF^{\dag}$ and $\ACVF^{\ddag}$}

The main object of this section is to compare $\ACVF^{\dag}$ and $\ACVF^{\ddag}$ in terms of the geometry of definable sets, or more precisely, minimality conditions. Note that $\ACVF^{\dag}$ and $\ACVF^{\ddag}$ are clearly not \cmin-minimal. However, they are both $b$-minimal, as shown below.

Let $\gC$ be a sufficiently saturated model of $\ACVF^{\dag}$ or $\ACVF^{\ddag}$, depending on the context. We fix a small substructure $S \sub \gC$ and work in $\ACVF^{\dag}_S$ or $\ACVF^{\ddag}_S$. For simplicity we shall still refer to the language of $\ACVF^{\dag}_S$ and $\ACVF^{\ddag}_S$ as $\lan{RV}^{\dag}$.

\begin{defn}
Let $\tau$ be an $\lan{RV}^{\dag}$-term. The \emph{complexity $\abs{\tau} \in \N$} of $\tau$ is defined inductively as follows.
\begin{enumerate}
  \item If $\tau$ is an $\lan{RV}$-term then $\abs{\tau} = 0$.
  \item If $\tau$ is of the form $\sn(\sigma)$ and $\sigma$ contains the function $\rv$ then $\abs{\tau} = \abs{\sigma} + 1$.
  \item If $\sigma(X_1, \ldots, X_n)$ is an $\lan{RV}$-term with $X_1, \ldots, X_n$ occurring variables then $\abs{\sigma(\tau_1, \ldots, \tau_n)} = \max \{\abs{\tau_1}, \ldots, \abs{\tau_n}\}$.
\end{enumerate}
The \emph{complexity $\abs{\phi} \in \N$} of a formula $\phi$ is the maximal complexity of the terms occurring in $\phi$.
\end{defn}

\begin{lem}\label{rv:fun:vf:no:ball}
Let $f : \RV^m \fun \VF$ be a definable function. Then $\ran(f)$ does not contain any open ball.
\end{lem}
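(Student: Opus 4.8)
The plan is to induct on the complexity $\abs{\theta}$ of a quantifier-free $\lan{RV}^{\dag}$-formula $\theta(\lbar X, Y)$ whose solution set is the graph of $f$; such a $\theta$ exists by QE (Theorem~\ref{qe:acvfi} or Theorem~\ref{qe:acvfii}). One uses throughout that every open ball is infinite: the residue field of an algebraically closed valued field is infinite, so $\rv$-balls, and a fortiori all open balls of finite radius, are infinite, and $\VF$ is infinite as well; thus it suffices to bound $\ran(f)$ enough to forbid an open ball inside it.

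For the base case $\abs{\theta}=0$ the function $f$ is definable in the $\lan{RV}$-reduct, which is a sufficiently saturated model of $\ACVF_{\dot S}$, and Lemma~\ref{function:rv:to:vf:finite:image} gives that $\ran(f)$ is finite, hence contains no open ball. (Under the reading of the complexity in which a complexity-$0$ formula may still apply $\sn$ to $\rv$-free $\RV$-terms, $\ran(f)$ need not be finite, but then $\ran(f)$ lies in the field-theoretic closure of $\VF(S)$ together with the section image $\sn(\RV)$; one invokes that this set is \emph{thin}: for $\ACVF^{\ddag}$ it is algebraic over $\VF(S)[\,\sn(\K)\,]$, whose value group is the divisible hull of $\Gamma(S)$, a small and hence proper subgroup of $\Gamma(\gC)$, so it omits values arbitrarily deep in $\MM$; for $\ACVF^{\dag}$ the section image is a multiplicative group on which $\rv$ is injective, so it meets each $\rv$-ball in at most one point. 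In either case no open ball fits.)

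For the inductive step let $\abs{\theta}=k+1\ge 1$ and let $\sn(\sigma_1),\dots,\sn(\sigma_p)$ be the subterms of $\theta$ of complexity $k+1$ that are of the form $\sn(\cdot)$; by the clauses defining the complexity, each $\sigma_i$ satisfies $\abs{\sigma_i}=k$, and since complexity is monotone under subterms every complexity-$(k{+}1)$ subterm of $\theta$ is built from these by $\lan{RV}$-operations. The key move is to pull the \emph{arguments} of these $\sn$-terms into fresh auxiliary variables. Introduce $\RV$-sort variables $W_1,\dots,W_p$, let $\theta^{*}(\lbar X,\lbar W,Y)$ be the result of replacing each $\sn(\sigma_i(\lbar X,Y))$ in $\theta$ by $\sn(W_i)$, and put
\[
\psi(\lbar X,\lbar W,Y)\ :=\ \theta^{*}(\lbar X,\lbar W,Y)\ \wedge\ \bigwedge_{i=1}^{p}\,W_i=\sigma_i(\lbar X,Y).
\]
Since $\sn(W_i)$ contributes no complexity and $\bigwedge_i W_i=\sigma_i(\lbar X,Y)$ has complexity $\max_i\abs{\sigma_i}=k$, the formula $\psi$ has complexity $\le k$. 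Substituting $W_i=\sigma_i(\lbar X,Y)$ back into $\theta^{*}$ recovers $\theta$, so $\ran(f)=\{\,y:\exists\lbar X\,\exists\lbar W\,\psi(\lbar X,\lbar W,y)\,\}$; moreover for each $(\lbar X,\lbar W)$ the formula $\psi(\lbar X,\lbar W,Y)$ has at most one solution in $Y$ (namely $f(\lbar X)$, forced once the equations make $\theta^{*}$ coincide with $\theta$). Hence
\[
h(\lbar X,\lbar W)\ =\ \begin{cases}\text{the unique }y\text{ with }\psi(\lbar X,\lbar W,y), & \text{if it exists,}\\ 0, & \text{otherwise,}\end{cases}
\]
defines a total function $h:\RV^{m+p}\fun\VF$ whose graph is cut out by a formula of complexity $\le k$, and $\ran(f)\subseteq\ran(h)\subseteq\ran(f)\cup\{0\}$. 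By the inductive hypothesis $\ran(h)$ contains no open ball, so neither does $\ran(f)$.

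The main obstacle is the complexity bookkeeping in the inductive step: one must substitute fresh $\RV$-variables for the \emph{arguments} of the offending $\sn$-terms rather than for the $\sn$-terms themselves --- a $\VF$-variable ranging over the section image $\sn(\RV)$ is not quantifier-free definable, whereas the equations $W_i=\sigma_i(\lbar X,Y)$ are harmless $\RV$-side conditions --- and one must verify from the precise clauses of $\abs{\cdot}$ that re-absorbing those equations does not push the complexity back above $k$. The other delicate point is the lowest-complexity case, where the content is exactly that the section image is too thin --- injective under $\rv$ for $\ACVF^{\dag}$, of small value group for $\ACVF^{\ddag}$ --- to contain, or to sweep out under a definable family, an open ball.
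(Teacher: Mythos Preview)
Your complexity-reduction step is correct and is a legitimate ``dual'' of the paper's: the paper peels off the \emph{innermost} $\rv$-terms (those with argument of complexity $\leq 1$) by substituting fresh $\RV$-variables $Z_i$, whereas you peel off the \emph{outermost} $\sn$-terms (those of maximal complexity $k{+}1$) by substituting $\sn(W_i)$. Both reductions drop the complexity as claimed, and your observation that the added equations $W_i=\sigma_i(\lbar X,Y)$ force $\psi(\lbar t,\lbar w,Y)$ to have at most one solution is right.

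The gap is in your base case for $\ACVF^{\dag}$. You correctly note that after reduction the remaining $\sn$-terms are applied only to $\rv$-free $\RV$-terms in the input variables, so for each $\lbar t$ the value $f(\lbar t)$ is algebraic over $\VF(S)$ together with finitely many elements of $\sn(\RV)$; hence $\ran(f)\subseteq L:=\overline{\VF(S)(\sn(\RV))}^{\,\mathrm{alg}}$. For $\ACVF^{\ddag}$ your value-group argument then finishes the job: $\sn(\K)\subseteq\UU$, so $\Gamma(L)$ is the divisible hull of $\Gamma(S)$, a proper subgroup of $\Gamma(\gC)$, and a subfield whose value group is proper cannot contain an open ball. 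But for $\ACVF^{\dag}$ you only argue that $\sn(\RV)$ itself meets each $\rv$-ball once; this says nothing about $L$. Indeed in the Puiseux series model $\bigcup_n\mathbb C((t^{1/n}))$ with the obvious section, the field generated by $\sn(\RV)=\{ct^\gamma\}$ already contains $\mathbb C(t)$, whose algebraic closure is the whole field --- so $L=\VF$ there, and ``thinness of $\sn(\RV)$'' is simply not inherited by $L$.

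The paper handles $\ACVF^{\dag}$ differently: it does not push down to complexity $0$ but stops at complexity $\leq 1$ and argues directly. On a sub-ball $\gb$ where all innermost $\rv(F_i(X))$ are constant, the $\sn$-terms become functions of $\lbar Y$ alone, so the $\VF$-literals in the formula are polynomial equations in the output variable with coefficients in the field generated by $\sn(\RV)$ and the finitely many $\VF$-parameters. Using saturation of $\gC$, one picks $a\in\gb$ \emph{transcendental} over that field; then no $\VF$-equality can hold at $a$, all remaining $\rv$-terms are locally constant, and any nearby $a'$ satisfies the same disjunct for the same $\lbar t$, contradicting functionality. This transcendence-over-$\sn(\RV)$ step is exactly what your $\ACVF^{\dag}$ base case is missing; your argument would be repaired by inserting it (or, equivalently, by stopping your induction at complexity $\leq 1$ and running the paper's direct argument there).
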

\begin{proof}
Let $\phi(X, \lbar Y)$ be a disjunction of conjunctions that defines $f$, where $X$ is a $\VF$-sort variable and $\lbar Y$ are $\RV$-sort variables. Let $\sigma_1(X, \lbar Y), \ldots, \sigma_k(X, \lbar Y)$ be all the distinct terms occurring in $\phi(X, \lbar Y)$ in the form $\rv(\sigma_i(X, \lbar Y))$ with $|\sigma_i(X, \lbar Y)| \leq 1$. Let $Z_1, \ldots, Z_k$ be $\RV$-sort variables and $\phi_1(X, \lbar Y, \lbar Z)$ the formula obtained from $\phi(X, \lbar Y)$ by replacing $\rv(\sigma_i(X, \lbar Y))$ with $Z_i$. Let $\psi(X, \lbar Y, \lbar Z)$ be the formula
\[
\phi_1(X, \lbar Y, \lbar Z) ) \wedge  \bigwedge_i \rv(\sigma_i(X, \lbar Y)) = Z_i,
\]
which defines a partial function $g : \RV^{m+k} \fun \VF$ such that $\ran(g) = \ran(f)$. Clearly $|\psi(X, \lbar Y, \lbar Z)| < |\phi(X, \lbar Y)|$. Repeating this procedure, we see that it is enough to prove the case $|\phi(X, \lbar Y)| \leq 1$.

So let $|\phi(X, \lbar Y)| \leq 1$. Suppose for contradiction that $\ran(f)$ contains an open ball. Let $F_i(X) \in \VF[X]$ enumerate all the polynomials occurring in $\phi(X, \lbar Y)$. Then there is an open ball $\gb \sub \ran(f)$ such that, for every $a$, $a' \in \gb$ and every $i$, $\rv(F_i(a)) = \rv(F_i(a')) \neq \infty$. Let $\phi_1(X, \lbar Y)$ be the formula obtained from $\phi(X, \lbar Y)$ by replacing $\rv(F_i(X))$ with $\rv(F_i(\gb))$. Let both $(\lbar t, a)$ and $(\lbar t', a')$ satisfy a disjunct $\phi^*_1(X, \lbar Y)$ of $\phi_1(X, \lbar Y)$. Note that we may choose $a$, $a'$ so that
\begin{enumerate}
  \item $a$ is transcendental over the field generated by $\sn(\RV)$ and the $\VF$-sort parameters occurring in $\phi(X, \lbar Y)$,
  \item $a'$ is arbitrarily close to $a$.
\end{enumerate}
Hence $\phi^*_1(X, \lbar Y)$ does not contain any $\VF$-sort equalities and $T(a, \lbar t) \neq 0$ for every term of the form $\rv(T(X, \lbar Y))$ occurring in $\phi^*_1(X, \lbar Y)$. Then, as in Lemma~\ref{function:rv:to:vf:finite:image}, $\phi^*_1(a', \lbar t)$ also holds, contradiction.
\end{proof}

\begin{rem}\label{RV:part}
For any subset $A \sub \VF^n$ defined by a formula $\phi(\lbar X)$, by a routine induction on $|\phi(\lbar X)|$, we see that there is a definable function $\pi : A \fun \RV^m$ and an $\lan{RV}$-formula $\phi^*(\lbar X, \lbar Y)$ such that $\pi^{-1}(\lbar t)$ is defined by the formula $\phi^*(\lbar X, \sn(\lbar t))$. For details see a more specialized version Lemma~\ref{complex:1} below.
\end{rem}

\begin{thm}
Both $\ACVF^{\dag}_S(0,0)$ and $\ACVF^{\ddag}_S(0,0)$ are $b$-minimal.
\end{thm}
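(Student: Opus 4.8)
The plan is to check the three conditions (b1), (b2), (b3) of Definition~\ref{def:b:min}, in each case reducing to the $b$-minimality of the $\lan{RV}$-reduct established in the previous section. Both theories $\ACVF^{\dag}_S(0,0)$ and $\ACVF^{\ddag}_S(0,0)$ are formulated in $\lan{RV}^{\dag}$, whose sole auxiliary sort is $\RV$, and every lemma of this section applies to either of them, so I would treat the two simultaneously; in particular $\sn(\lbar t) \in \VF$ for every tuple $\lbar t$ from $\RV$ in both cases.

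Condition (b2) I would settle at once: if $g$ is a definable function from an auxiliary subset --- necessarily a subset of some $\RV^m$ --- to a ball of radius $< \infty$, then by Lemma~\ref{rv:fun:vf:no:ball} its range contains no open ball; but every ball of finite radius, open or closed, contains an open ball, so $g$ is not surjective. For (b1) and (b3), fix an $S$-definable $A \sub \VF$ and an $A$-definable $f : A \fun \VF$; absorbing the finitely many $\VF$-parameters needed to define $f$ into the (still small) parameter set, I may take $f$ to be $S$-definable. The key step is to slice $A$ into $\lan{RV}$-definable pieces. I would apply Remark~\ref{RV:part} to the graph $\gE \sub \VF^2$ of $f$ to obtain a definable $\pi : \gE \fun \RV^{m_0}$ and an $\lan{RV}$-formula $\phi^*(\lbar X, \lbar Y)$ such that $\pi^{-1}(\lbar t)$ is defined by $\phi^*(\lbar X, \sn(\lbar t))$, and then transport along the first-coordinate projection $\gE \fun A$ (a bijection, as $\gE$ is a graph) to get a definable $\pi_0 : A \fun \RV^{m_0}$ for which, on each fiber, both $\pi_0^{-1}(\lbar t)$ and the restriction $f \rest \pi_0^{-1}(\lbar t)$ are definable in the $\lan{RV}$-reduct over a small substructure containing $\sn(\lbar t)$.

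Working inside a single such fiber, I would feed the surjection $f \rest \pi_0^{-1}(\lbar t) \fun f(\pi_0^{-1}(\lbar t))$ into Corollary~\ref{b:minimal:con:13}, the characteristic-$0$ $\lan{RV}$-statement, obtaining an $(S \cup \set{\lbar t})$-definable $Q_{\lbar t} : \pi_0^{-1}(\lbar t) \fun \RV^{l}$ whose fibers are open balls or points and on which $f$ is constant or injective; by compactness the index $l$ may be taken uniform in $\lbar t$ and the assignment $\lbar t \efun Q_{\lbar t}$ uniformly definable, so that
\[
P : A \fun \RV^{m_0 + l}, \qquad a \efun (\pi_0(a), Q_{\pi_0(a)}(a))
\]
is a single definable function simultaneously witnessing (b1) and (b3) (for (b1) on its own one may instead take $f = \id_A$). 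The real work --- Lemma~\ref{rv:fun:vf:no:ball} and the fibering of Remark~\ref{RV:part}, together with the $\lan{RV}$-minimality package of the previous section --- is already in hand, so I do not expect a serious obstacle; the only points requiring care are that Remark~\ref{RV:part} does yield genuinely $\lan{RV}$-definable slices once $A$ is replaced by the graph $\gE$, and the routine compactness gluing of the fiberwise maps $Q_{\lbar t}$ into a single $P$.
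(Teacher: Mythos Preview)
Your proposal is correct and follows essentially the same route as the paper: (b2) from Lemma~\ref{rv:fun:vf:no:ball}, then Remark~\ref{RV:part} to slice into $\lan{RV}$-definable pieces indexed by $\RV$-tuples, then the $\lan{RV}$-minimality results of the previous section fiberwise, glued by compactness. The only cosmetic differences are that you apply Remark~\ref{RV:part} to the graph of $f$ (making the treatment of (b3) more explicit than the paper's ``applying the same argument'') and invoke Corollary~\ref{b:minimal:con:13} directly rather than the packaged Theorem~\ref{b:min:acvf}.
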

\begin{proof}
Condition (b2) in Definition~\ref{def:b:min} follows from Lemma~\ref{rv:fun:vf:no:ball}. For (b1), let $A \sub \VF$ be definable and $\pi$ a function for $A$ as described in Remark~\ref{RV:part}. Since each $\pi^{-1}(\lbar t)$ is $\sn(\lbar t)$-definable in $\lan{RV}$, by Theorem~\ref{b:min:acvf}, there is an $\sn(\lbar t)$-definable function $P_{\lbar t} : A \fun \RV^m$ that makes (b1) hold for $\pi^{-1}(\lbar t)$. By compactness, the function $P : A \fun \RV^l$ given by
\[
a \efun (\pi(a), P_{\pi(a)}(a))
\]
is definable and makes (b1) hold for $A$. Applying same argument to any definable function $f : A \fun \VF$, (b3) also follows.
\end{proof}

For the rest of this section, unless indicated otherwise, we suppose that $\sn$ is a section of $\K$. We shall introduce a modified version of \cmin-minimality, called local minimality, which $\ACVF^{\ddag}$ satisfies but $\ACVF^{\dag}$ does not.

\begin{defn}\label{defn:local:c}
Let $K$ a valued field considered as a structure of some language $\LL$ and $\Gamma$ its value group ($\Gamma$ is somehow definable, possibly as an imaginary sort). Let $A \sub K^n$ and $p: A \fun \Gamma$ an definable function. We say that $p$ is a \emph{volumetric partition} of $A$ if $p$ is constant on $\go(\lbar a, p(\lbar a)) \cap A$ for any $\lbar a \in A$.
\end{defn}

Volumetric partitions are so named because of their role in the integration theory (see~\cite[Section~3]{yin:hk:part:3}). For example, the valuation $\vv : K \fun \Gamma \cup \set{\infty}$ is a volumetric partition.

\begin{defn}
Let $K$ and $\LL$ be as in Definition~\ref{defn:local:c}. A definable subset $A \sub K$ is \emph{locally \cmin-minimal} if there is a volumetric partition $p: A \fun \Gamma$ such that $\go(a, p(\lbar a)) \cap A$ is a boolean combination of balls for every $a \in A$. We say that $K$ is \emph{locally \cmin-minimal} if every definable subset of $K$ is locally \cmin-minimal. A theory $T$ in $\LL$ is \emph{locally \cmin-minimal} if it includes the axioms for valued fields and every model of $T$ is locally \cmin-minimal.
\end{defn}

Consider the $\lan{RV}^{\dag}$-formula $\rv(X) = 1 \wedge \sn(X - 1) - X -1 = 0$. In $\ACVF^{\dag}$ it defines the set $\set{1 + \sn(t) : \vrv(t) > 0}$, which is clearly not locally $C$-minimal. On the other hand, in $\ACVF^{\ddag}$ this formula defines the singleton $\set{1}$.

\begin{lem}\label{term:simplfied:II}
Let $\sn(\sigma(X))$ be a term with $\abs{\sn(\sigma(X))} = 1$, where $X$ is a $\VF$-sort variable. Then there is a volumetric partition $p : \VF \fun \Gamma$ such that for each $\go(a, p(a))$ one of the following possibilities occurs:
\begin{enumerate}
  \item $\sn(\sigma(a'))$ is not defined for any $a' \in \go(a, p(a))$;
  \item $\sn(\sigma(a')) = \sn(\sigma(a))$ for every $a' \in \go(a, p(a))$.
\end{enumerate}
\end{lem}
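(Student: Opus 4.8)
The plan is to read off the structure of $\sigma$, isolate the finitely many polynomials in which the $\VF$-sort variable $X$ actually occurs, and let $p$ cut $\VF$ into open balls avoiding the roots of these polynomials, treating separately the finitely many balls that cannot. Since $\abs{\sn(\sigma(X))} = 1$, either $X$ does not occur in $\sn(\sigma(X))$ and the lemma is trivial, or $\sigma$ contains $\rv$ and $\abs{\sigma(X)} = 0$, i.e.\ $\sigma$ is a $\sn$-free $\lan{RV}$-term. As $\rv$ is the only passage from the $\VF$-sort to the $\RV$-sort, every occurrence of $X$ in $\sigma$ lies inside a subterm $\rv(F_i(X))$ with $F_i$ a polynomial over the parameters of $\sigma$. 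Let $F_1, \dots, F_k$ list these polynomials and $R \sub \VF$ the finite set of all their roots. A direct ultrametric computation (compare item~(7) in the list of ball properties above) shows that if $\gb$ is an open ball with $\gb \cap R = \0$ then every $\rv \circ F_i$ is constant on $\gb$; since $\sigma$ is built from the $\rv(F_i(X))$ by $\lan{RV}$-operations and parameters not involving $X$, and the definedness of a $\K$-term refers only to the values of its summands, $\sigma(\cdot)$ --- hence $\sn(\sigma(\cdot))$ --- is either undefined throughout $\gb$ or constant on $\gb$.

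The real point is the behaviour near a root. Fix $d_0 \in R$. By induction on the subterms of $\sigma$ I would produce a $\gamma_{d_0} \in \Gamma$ such that for every $\gamma \ge \gamma_{d_0}$ and every subterm $\rho$, exactly one of the following holds on $\go(d_0, \gamma)$: $\rho$ is undefined everywhere; $\rho$ is constant; or $\rho$ takes values in $\RV^{\times} \cup \set{\infty}$ and $\vrv(\rho(a'))$ exceeds a fixed strictly increasing unbounded function of $\gamma$, uniformly in $a'$. The base terms $\rv(F_i(X))$ with $F_i(d_0) \ne 0$ are constant once $\gamma$ is large, whereas if $F_i(d_0) = 0$ with multiplicity $e_i$ then $\vrv(\rv(F_i(a'))) = e_i\vv(a' - d_0) + \vv(G_i(d_0))$ blows up as the ball shrinks, landing in the third case; multiplication respects the trichotomy once $\gamma$ is large enough for the blown-up valuation of a third-case factor to absorb the fixed valuation of any constant factor; and a $\K$-term is constant when each summand is forced into $\K$ (or is $0$) and is undefined throughout $\go(d_0, \gamma)$ as soon as one summand is of the third kind or has a fixed nonzero value. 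Taking $\rho = \sigma$: the first case gives possibility~(1); the second gives possibility~(2); in the third $\vrv(\sigma(a')) > 0$ everywhere, so $\sigma(a') \notin \K^{\times}$ and $\sn(\sigma(a')) = 0$ for all $a'$, again possibility~(2).

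To assemble $p$, enlarge the $\gamma_d$, $d \in R$, so that also $\gamma_d \ge \max\set{\vv(d - d') : d' \in R,\ d' \ne d}$; then the balls $\go(d, \gamma_d)$ are pairwise disjoint. Set $p(a) = \gamma_d$ if $a \in \go(d, \gamma_d)$ for the (unique) such $d$, and $p(a) = \max\set{\vv(a - d) : d \in R}$ otherwise (with $p \equiv 0$ if $R = \0$). An ultrametric check shows that $p$ is constant on each $\go(a, p(a))$, so $p$ is a volumetric partition, and that each $\go(a, p(a))$ of the second kind is disjoint from $R$; moreover $p$ is definable, because $R$, the $\gamma_d$ (built from valuations of polynomial expressions in $d$ together with fixed offsets coming from the parameters of $\sigma$, with a finite case split on multiplicities), and the membership tests are all definable. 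Possibilities~(1) and~(2) then hold for the $R$-avoiding balls by the first step and for the at most $\abs R$ remaining balls $\go(d, \gamma_d)$ by the analysis near a root.

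I expect that structural induction near a root to be the main obstacle: one must keep the bookkeeping of $\K$-term definedness and of the growth of $\vrv$ along a shrinking ball aligned, so that a single radius $\gamma_{d_0}$ serves the whole term $\sigma$, and one must also make the resulting $\gamma_{d_0}$ a definable function of $d_0$. The away-from-roots case and the verification that $p$ is a volumetric partition are routine.
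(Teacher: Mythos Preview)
Your proposal is correct and follows the same overall strategy as the paper: isolate the polynomials $F_i$ hidden in $\sigma$, separate $\VF$ into a finite union of balls around their roots and a complement on which every $\rv\circ F_i$ is constant, and assemble a volumetric partition from the two pieces.

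The one substantive difference is where the hypothesis ``$\sn$ is a section of $\K$'' is spent. The paper uses it at the very start: since $\sn$ vanishes off $\K^{\times}$, one may assume outright that $\sigma(X)$ has the flat form $\sum_{i=1}^{k}\rv(F_i(X))\cdot r_i$, and then the near-root analysis is a one-line case split (if some $F_i(a)=0$, a small ball forces that summand to have value $>0$, so for $k>1$ the $\K$-term is undefined and for $k=1$ one lands outside $\K^{\times}$, whence $\sn=0$). You instead carry a general $\sigma$ and run a structural induction with a three-way trichotomy on subterms, invoking the $\K$-section property only at the end to collapse your ``blowing-up value'' branch into possibility~(2). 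Your route works, but it is heavier than necessary: the term grammar here is essentially flat (a product of a monomial and a single $\K$-term), so the induction buys little, and the upfront reduction the paper makes is exactly the step that distinguishes this lemma from its $\ACVF^{\dag}$ analogue, where a genuine third possibility survives. Your assembly of $p$ (root-by-root radii $\gamma_d$ versus the paper's single $\beta$, and $\max_d \vv(a-d)$ on the complement) is a harmless variant of the paper's construction.
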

\begin{proof}
Since $\sn$ is only nontrivially defined on $\K^{\times}$, we may assume that the $\lan{RV}$-term $\sigma(X)$ is of the form $\sum_{i=1}^k (\rv(F_{i}(X)) \cdot r_{i})$. Fix an $a \in \VF$. If $F_{i}(a) = 0$ for some $i$ then there is an $a$-definable $\gamma_{a} \in \Gamma$ such that, for every $a' \in \go(a, \gamma_{a})$, $\vv(F_{i}(a')) > - \vrv(r_i)$. Hence, on $\go(a, \gamma_{a})$, if $k > 1$ then (1) occurs and if $k =1$ then (2) occurs. If $F_{i}(a) \neq 0$ for all $i$ then there is an $a$-definable $\gamma_{a} \in \Gamma$ with the following property: $\gamma_{a}$ is the least value such that $\rv(F_i(a')) = \rv(F_i(a))$ for every $i$ and every $a' \in \go(a, \gamma_{a})$. It exists because $\Gamma$ is $o$-minimal. Note that if $a' \in \go(a, \gamma_{a})$ then $\gamma_{a} = \gamma_{a'}$. Therefore, on $\go(a, \gamma_{a})$, if $\sigma(a) = 0$ then (1) occurs and if $\sigma(a) \neq 0$ then (2) occurs.

Without loss of generality we may assume $\gamma_{a} \geq \vv(a)$. We construct a volumetric partition $p : \VF \fun \Gamma$ as follows. Let $A_1$ be the set of zeros of $F_{i}(X)$. Then there is a definable $\beta \in \Gamma$ such that $\beta \geq \gamma_a$ for all $a \in A_1$ and $\go(a_1, \beta) \cap \go(a_2, \beta) = \0$ for all $a_1$, $a_2 \in A_1$. Let $p_1 : \bigcup_{a \in A_1} \go(a, \beta) \fun \set{\beta}$ be the constant function. Let $A_2 = \VF \mi \bigcup_{a \in A_1} \go(a, \beta)$. For every $a \in A_2$ let $\beta_{a}$ be the least value such that $\go(a, \beta_{a}) \sub A_2$. These exist because $\Gamma$ is $o$-minimal. Let $p_2 : A_2 \fun \Gamma$ be such that $p_2(a) = \max\set{\gamma_{a}, \beta_{a}}$. Then $p = p_1 \cup p_2$ is as desired.
\end{proof}

This lemma is the key to showing local \cmin-minimality of $\ACVF^{\ddag}_S$, which fails in $\ACVF^{\dag}_S$. A more complicated version of it does hold in $\ACVF^{\dag}_S$:

\begin{lem}\label{term:simplfied}
Suppose that $\sn$ is a section of $\RV$. Let $\sn(\sigma(X))$ be a term with $\abs{\sn(\sigma(X))} = 1$. Then there is a volumetric partition $p : \VF \fun \Gamma$ such that for each $\go(a, p(a))$ one of the following possibilities occurs:
\begin{enumerate}
  \item $\sn(\sigma(a'))$ is not defined for every $a' \in \go(a, p(a))$;
  \item $\sn(\sigma(a')) = \sn(\sigma(a))$ for every $a' \in \go(a, p(a))$;
  \item there is a natural number $l$ (not depending on $a$), an element $c \in \go(a, p(a))$ (depending on $\go(a, p(a))$ rather than $a$), and an element $b \in \VF$ (depending on $c$ and hence on $\go(a, p(a))$) such that $\sn(\sigma(a')) = b \sn(\rv(a' - c))^l$ for every $a' \in \go(a, p(a))$.
\end{enumerate}
\end{lem}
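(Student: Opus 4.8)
The plan is to run the proof of Lemma~\ref{term:simplfied:II}, but to retain the residual factor coming from the roots of the polynomial inside $\sigma$; this factor is exactly what forces the extra possibility~(3). First I would normalise $\sigma(X)$. Since $\abs{\sigma(X)}=0$, it is a pure $\lan{RV}$-term in the single $\VF$-variable $X$ with parameters, and the $\ACVF$-axioms rewrite it provably as
\[
\sigma(X)=\rv(F(X))\cdot r\cdot\prod_{j}T_{j}(X),
\]
where $F(X)\in\VF[X]$, $r\in\RV$, and each $T_{j}(X)$ is a $\K$-term (multiplicativity of $\rv$ collapses all $\rv$-factors into one). As $\sn$ is a section of $\RV$, the restriction $\sn\rest\RV^{\times}$ is a multiplicative homomorphism, so wherever $\sigma(X)$ is defined and nonzero
\[
\sn(\sigma(X))=\sn(\rv(F(X))\cdot r)\cdot\prod_{j}\sn(T_{j}(X));
\]
it is precisely this use of the homomorphism property on all of $\RV^{\times}$, rather than only on $\K^{\times}$, that is unavailable in the situation of Lemma~\ref{term:simplfied:II}.

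Exactly as in the proof of Lemma~\ref{term:simplfied:II}, using $o$-minimality of $\Gamma$, one obtains a definable volumetric partition $p_{T}:\VF\fun\Gamma$ such that on each ball $\go(a,p_{T}(a))$ every polynomial $G$ occurring inside some $T_{j}$ has $\rv(G(X))$ constant; hence on such a ball each $T_{j}$ is either undefined throughout---yielding possibility~(1)---or constant throughout, so $\prod_{j}\sn(T_{j}(X))$ is a fixed element of $\VF$ there. It then remains to analyse $\rv(F(X))\cdot r$ and to refine $p_{T}$ so as to separate the roots of $F$. Over the algebraically closed field $\VF(\gC)$ factor $F(X)=u\prod_{i=1}^{k}(X-d_{i})^{m_{i}}$ with $d_{1},\dots,d_{k}$ the distinct roots; the finite set $D=\set{d_{1},\dots,d_{k}}$ is $\0$-definable, and for $X\notin D$ we have $\sn(\rv(F(X))\cdot r)=\sn(\rv(u)\cdot r)\cdot\prod_{i}\sn(\rv(X-d_{i}))^{m_{i}}$.

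For each $d\in D$ put $\gamma(d)=\max\bigl(\set{p_{T}(d)}\cup\set{\vv(d-d'):d'\in D,\ d'\ne d}\bigr)\in\Gamma$ and $\gb_{d}=\go(d,\gamma(d))$. A routine ultrametric computation shows the $\gb_{d}$ are pairwise disjoint, each contains exactly one point of $D$, and that off $\bigcup_{d\in D}\gb_{d}$ the rule $a\mapsto\max(\set{p_{T}(a)}\cup\set{\vv(a-d'):d'\in D})$ again defines a volumetric partition into balls disjoint from $D$. Letting $p$ be $\gamma(d)$ on each $\gb_{d}$ and this rule elsewhere gives a definable volumetric partition $p:\VF\fun\Gamma$ refining $p_{T}$. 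Now one checks the trichotomy cell by cell: on a cell disjoint from $D$ every $\rv(X-d_{i})$ is constant, so $\sn(\rv(F(X))\cdot r)$ is constant and, together with the $T_{j}$ being constant or undefined, possibility~(1) or~(2) holds; on a cell $\gb_{d}$, where $d$ has multiplicity $m$, every $\rv(X-d_{j})$ with $d_{j}\ne d$ equals $\rv(d-d_{j})$ throughout $\gb_{d}$ because $\gamma(d)\ge\vv(d-d_{j})$, whereas $\rv(X-d)$ genuinely varies, so absorbing all the constant factors into a single $b\in\VF$ and taking $c=d\in\gb_{d}$ and $l=m$ we get $\sn(\sigma(X))=b\cdot\sn(\rv(X-c))^{l}$ on $\gb_{d}$ (both sides being $0$ at $X=d$), which is possibility~(3).

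The step I expect to be the main obstacle is the construction of $p$ near the roots. Because $p$ is required to take values in $\Gamma$ rather than in $\Gamma\cup\set{\infty}$, one cannot collapse a neighbourhood of a root to a point; each root must be committed to a genuine ball $\gb_{d}$ of finite radius which is nonetheless large enough to swallow all the other roots, and one must still glue these balls together with the complementary cells into an honest partition of all of $\VF$. Checking disjointness of the $\gb_{d}$, that the complementary cells assemble into a volumetric partition, and that the whole thing refines $p_{T}$ is the delicate bookkeeping. One should also note that the exponent $l$ in possibility~(3) is the multiplicity of the root enclosed by the relevant ball; it is therefore constant on that ball and ranges over the finite set $\set{m_{1},\dots,m_{k}}$ determined by $\sigma$, in particular not depending on $a$.
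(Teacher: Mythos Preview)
Your argument is correct and follows essentially the same route as the paper's proof: normalise $\sigma(X)$ as $\rv(F(X))\cdot r\cdot T(X)$, handle the $\K$-term factor exactly as in Lemma~\ref{term:simplfied:II}, and then observe that near a root of $F$ of multiplicity $l$ one isolates a factor $\sn(\rv(X-c))^{l}$, giving possibility~(3). The paper is terser---it parameterises by the point $a$ and simply says that when $F(a)=0$ there is a suitable $\gamma_a$ with only one root in $\go(a,\gamma_a)$, then defers the assembly of the volumetric partition to the construction in Lemma~\ref{term:simplfied:II}---whereas you spell out the factorisation of $F$, the explicit radii $\gamma(d)$, and the gluing of the root-balls with the complementary region; but the content is the same.
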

\begin{proof}
The argument is very similar to that for Lemma~\ref{term:simplfied:II}, although here there is one more possibility. So we shall not spell out all the details when there is no danger of confusion.

The $\lan{RV}$-term $\sigma(X)$ is of the form $\rv(F(X)) \cdot r \cdot T(X)$, where $T(X)$ is a $\K$-term of the form $\sum_{i=1}^k (\rv(F_{i}(X)) \cdot r_{i})$ with $k > 1$ (if $k = 1$ then by convention $T(X)$ is the constant $1$). Fix an $a \in \VF$.

First suppose $k =1$. If $F(a) = 0$ then there is a definable $\gamma_a \in \Gamma$, an $a$-definable $d \in \VF$, and a natural number $l$ such that
\begin{itemize}
  \item there is only one root of $F(X)$ contained in $\go(a, \gamma_a)$, namely $a$,
  \item $\rv(F(a')) = \rv(d) \rv(a' - a)^l$ for every $a' \in \go(a, \gamma_a)$.
\end{itemize}
Thus (3) occurs with $c = a$. If $F(a) \neq 0$ then there is a definable $\gamma_a \in \Gamma$ such that $\rv(F(a')) = \rv(F(a))$ for every $a' \in \go(a, \gamma_a)$ and hence (2) occurs.

For the case $k > 1$ we can use the corresponding part in the proof of Lemma~\ref{term:simplfied:II}, noting that if $T(a) = 0$ then (1) occurs and if $T(a) \neq 0$ then we are back in the case $k = 1$.

The construction of a volumetric partition $p : \VF \fun \Gamma$ is more or less as in the proof of Lemma~\ref{term:simplfied:II}.
\end{proof}

\begin{lem}\label{complex:1}
Let $\phi(X)$ be a quantifier-free formula and $A \sub \VF$ the subset defined by it. Then there is a volumetric partition $p : \VF \fun \Gamma$ and a definable function $\pi : \VF \fun \RV^m$ such that
\begin{enumerate}
  \item $\pi$ is constant on every $\go(a, p(a))$,
  \item every intersection $\pi^{-1}(\lbar t) \cap A$ is $\lan{RV}$-definable with the parameters $\sn(\lbar t)$.
\end{enumerate}
\end{lem}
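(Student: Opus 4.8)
The plan is to induct on the complexity $\abs{\phi(X)}$, peeling off the innermost occurrences of the symbol $\sn$ one layer at a time; at each step Lemma~\ref{term:simplfied:II} is used both to manufacture the volumetric partition and to guarantee that the auxiliary $\RV$-valued map being built is constant on its balls. (The part of the statement not involving $p$ is a detailed form of the construction indicated in Remark~\ref{RV:part}.) If $\abs{\phi(X)} = 0$ then $\phi(X)$ is already an $\lan{RV}$-formula, so I take $\pi$ to be a constant map into $\RV^{0}$ and $p$ to be any volumetric partition, say $p = \vv$; since $\pi$ is then globally constant, both conditions hold trivially. The induction should be run simultaneously over all small parameter sets, so that the fibre formulas introduced below, which carry extra parameters, fall under the inductive hypothesis.

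For the inductive step, let $\abs{\phi(X)} = N \ge 1$ and let $\sn(\sigma_1(X)), \ldots, \sn(\sigma_k(X))$ be the distinct subterms of $\phi(X)$ of complexity exactly $1$, so that each $\sigma_i(X)$ is an $\lan{RV}$-term; as one checks, every $\sn$-subterm of complexity $\ge 2$ properly contains one of these. I apply Lemma~\ref{term:simplfied:II} to each $\sn(\sigma_i(X))$ and replace the finitely many resulting volumetric partitions by their pointwise maximum $q : \VF \fun \Gamma$; this is again a volumetric partition because for fixed $a$ the balls $\go(a, \gamma)$ are linearly ordered by inclusion. Thus on each ball $\go(a, q(a))$ and for each $i$ the composite $\sn \circ \sigma_i$ is either undefined throughout the ball or constant on it. Next I let $\pi_1 : \VF \fun \RV^{l}$ record, for each $i$, whether $\sigma_i(X)$ is defined and, if so, the element $\sigma_i(X)$ itself when $\sigma_i(X) \in \K^{\times}$ and the symbol $\infty$ otherwise, all encoded by elements $s$ of $\RV$ chosen so that $s = \rv(\sn(s))$ (for instance, $1$ and $\infty$ for the definedness flags). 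The crucial point --- and the technical heart of the proof --- is that $\pi_1$ is constant on every $q$-ball: on a ball where $\sn \circ \sigma_i$ is undefined or identically $0$ the corresponding coordinate is $\infty$ throughout, whereas on a ball where $\sn \circ \sigma_i$ is a constant $v \ne 0$ the element $\sigma_i(\cdot)$ must itself be the constant $\rv(v)$ throughout, since $\sn$ vanishes off $\K^{\times}$, is injective on $\K^{\times}$, and satisfies $\sn(t) \in t$. Because $s = \rv(\sn(s))$ for every value $s$ of a coordinate of $\pi_1$, the fibre $\pi_1^{-1}(\lbar s)$ is $\lan{RV}$-definable from the parameters $\sn(\lbar s)$, and on that fibre each $\sn(\sigma_i(X))$ equals either $0$ or the parameter $\sn(s_i)$, as dictated by $\lbar s$.

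It follows that on the fibre $\pi_1^{-1}(\lbar s)$ the set $A$ is defined by an $\lan{RV}^{\dag}$-formula $\phi_{\lbar s}(X)$ over the parameters $\sn(\lbar s)$, obtained from $\phi(X)$ by replacing each complexity-$1$ subterm $\sn(\sigma_i(X))$ by $0$ or by $\sn(s_i)$ (and dropping the atoms in which $\sigma_i$ is recorded as undefined); as the replaced subterms were the innermost $\sn$-subterms, $\abs{\phi_{\lbar s}(X)} \le N - 1$. Write $A_{\lbar s}$ for the subset of $\VF$ that $\phi_{\lbar s}$ defines. By the inductive hypothesis, applied to $\phi_{\lbar s}(X)$ uniformly in $\lbar s$ (using compactness), there are $p_{\lbar s} : \VF \fun \Gamma$ and $\pi_{\lbar s} : \VF \fun \RV^{m'}$ satisfying conditions (1) and (2) for $\phi_{\lbar s}$. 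I then set
\[
p(a) = \max\set{q(a),\, p_{\pi_1(a)}(a)}, \qquad \pi(a) = \bigl(\pi_1(a),\, \pi_{\pi_1(a)}(a)\bigr).
\]
Using that $q$ and each $p_{\lbar s}$ are volumetric partitions and that $\pi_1$ is constant on $q$-balls, one checks that $p$ is a volumetric partition and that $\pi$ is constant on every $\go(a, p(a))$; and for $\lbar t = (\lbar s, \lbar u)$ one has
\[
\pi^{-1}(\lbar t) \cap A = \pi_1^{-1}(\lbar s) \cap \bigl(\pi_{\lbar s}^{-1}(\lbar u) \cap A_{\lbar s}\bigr),
\]
which is $\lan{RV}$-definable from $\sn(\lbar s)$ and $\sn(\lbar u)$, hence from $\sn(\lbar t)$. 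This closes the induction.

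I expect the main obstacle to be precisely the compatibility just highlighted: Lemma~\ref{term:simplfied:II} only governs the composite $\sn \circ \sigma_i$, whereas condition (1) forces the recording map $\pi_1$ to be constant on the same balls, and this works only because on a ball where $\sn \circ \sigma_i$ is a nonzero constant $v$ the $\RV$-term $\sigma_i$ is itself constantly equal to $\rv(v)$ --- and it is here that the hypothesis that $\sn$ is a section of $\K$, not merely of $\RV$, enters (through Lemma~\ref{term:simplfied:II}), in accordance with the failure of local \cmin-minimality in $\ACVF^{\dag}$. A subsidiary, and purely bookkeeping, point is the careful handling of the partiality of the $\RV$-terms $\sigma_i$ together with the verification that the two-layer definitions of $p$ and $\pi$ glue into genuine single volumetric and ball-constant data; these are routine once the above framework is in place.
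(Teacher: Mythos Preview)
Your argument is correct and follows essentially the same route as the paper: induct on $\abs{\phi}$, list the innermost $\sn$-subterms, use Lemma~\ref{term:simplfied:II} to get a common volumetric partition on whose balls the associated $\RV$-valued recording map is constant, replace those subterms by $\sn$-parameters to drop the complexity, and glue via $\max$ and concatenation with a compactness step. The only cosmetic differences are that the paper takes $\abs{\phi}\le 1$ as the base case and records $\pi_0$ slightly differently, and it justifies the constancy of $\pi_0$ on balls by appeal to the proof of Lemma~\ref{term:simplfied:II} (where in case~(2) one actually has $\sigma_i$ constant, not just $\sn\circ\sigma_i$) rather than via your injectivity argument; both justifications are fine.
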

\begin{proof}
Let $\sigma_1(X), \ldots, \sigma_k(X)$ be all the distinct $\lan{RV}$-terms occurring in $\phi(X)$ in the form $\sn(\sigma_i(X))$. Let $p_0 : \VF \fun \Gamma$ be a volumetric partition that makes Lemma~\ref{term:simplfied:II} hold for every $\sn(\sigma_i(X))$. Let $\hat{\phi}(X, X_1, \ldots, X_k)$ be the formula obtained from $\phi(X)$ by replacing $\sn(\sigma_i(X))$ with a $\VF$-sort variable $X_i$. For each $a \in \VF$, if some $\sn(\sigma_i(a))$ is not defined then set
\[
\pi_0(a) = (\infty, \ldots, \infty) \in \RV^{k+1},
\]
otherwise set
\[
\pi_0(a) = (\sigma_1(a), \ldots, \sigma_k(a), 1) \in \RV^{k+1}.
\]
Clearly the function $\pi_0 : \VF \fun \RV^{k+1}$ is constant on every $\go(a, p_0(a))$. This means that each $\pi_0^{-1}(\lbar t)$ is a union of balls of the form $\go(a, p_0(a))$. Without loss of generality we may assume $\pi_0(\VF) \sub \RV^k \times \set{1}$. For every $\lbar t \in \pi_0(\VF)$ let
\[
\lbar a_{\lbar t} = (a_1, \ldots, a_k) = (\sn(t_1), \ldots, \sn(t_k)).
\]
Then the intersection $\pi_0^{-1}(\lbar t) \cap A$ is defined with $\lbar a_{\lbar t}$ by the formula
\[
\hat{\phi}(X, \lbar a_{\lbar t}) \wedge \bigwedge_i \sigma_i(X) = \rv(a_i),
\]
which shall be called $\phi_0(X, \lbar a_{\lbar t})$.

We now proceed by induction on $|\phi(X)|$. For the base case $|\phi(X)| \leq 1$, we see that $\phi_0(X, \lbar a_{\lbar t})$ is actually an $\lan{RV}$-formula and hence $p_0$, $\pi_0$ are as required. For the inductive step, since $|\phi_0(X, \lbar a_{\lbar t})| < |\phi(X)|$, we may apply the inductive hypothesis to $\phi_0(X, \lbar a_{\lbar t})$ to obtain two $\lbar t$-definable functions $p_{\lbar t}$ and $\pi_{\lbar t}$ on $\pi_0^{-1}(\lbar t)$ that satisfy the required conditions. Let $p : \VF \fun \Gamma$ be the function given by
\[
a \efun \max \set{p_0(a), p_{\pi_0(a)}(a)}
\]
and $\pi : \VF \fun \RV^l$ the function given by
\[
a \efun (\pi_0(a), \pi_{\pi_0(a)}(a)).
\]
By compactness these two functions are definable and hence are as required.
\end{proof}

\begin{thm}
The theory $\ACVF^{\ddag}_S$ is locally $C$-minimal.
\end{thm}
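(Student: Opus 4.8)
The plan is to reduce the problem, fibre by fibre, to the $\lan{RV}$-reduct and then invoke $C$-minimality of $\ACVF$ (Theorem~\ref{c:min:acvf}). Let $A \sub \VF$ be a definable subset. By quantifier elimination (Theorem~\ref{qe:acvfii}), $A$ is defined by a quantifier-free $\lan{RV}^{\dag}$-formula $\phi(X)$, so I would first feed $\phi(X)$ to Lemma~\ref{complex:1} to obtain a volumetric partition $p : \VF \fun \Gamma$ and a definable function $\pi : \VF \fun \RV^m$ such that $\pi$ is constant on every open ball $\go(a, p(a))$ and every fibre intersection $\pi^{-1}(\lbar t) \cap A$ is $\lan{RV}$-definable over the parameters $\sn(\lbar t)$ (together with the parameters of $S$, which are $\VF$- and $\RV$-sort elements).

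Next I would observe that $p \rest A$ is a volumetric partition of $A$ in the sense of Definition~\ref{defn:local:c}: for $a \in A$ the set $\go(a, p(a)) \cap A$ sits inside $\go(a, p(a))$, on which $p$ is constant. So it remains to check that $\go(a, p(a)) \cap A$ is a boolean combination of balls for each $a \in A$. Fix $a \in A$ and put $\lbar t = \pi(a)$. Since $\pi$ is constant on $\go(a, p(a))$ with value $\lbar t$, we have $\go(a, p(a)) \sub \pi^{-1}(\lbar t)$ and hence
\[
\go(a, p(a)) \cap A = \go(a, p(a)) \cap \bigl( \pi^{-1}(\lbar t) \cap A \bigr).
\]
The set $\pi^{-1}(\lbar t) \cap A$ is definable in the $\lan{RV}$-reduct of $\gC$, which is a model of $\ACVF$, using only $\VF$- and $\RV$-sort parameters; so by Theorem~\ref{c:min:acvf} it is a boolean combination of balls, and intersecting it with the open ball $\go(a, p(a))$ keeps it a boolean combination of balls. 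This establishes local $C$-minimality of $A$, and since $A$ was an arbitrary definable subset of $\VF$, it follows that $\ACVF^{\ddag}_S$ is locally $C$-minimal.

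In effect this final step is pure assembly: all of the real work has already been pushed into Lemma~\ref{term:simplfied:II} and Lemma~\ref{complex:1}. The conceptual point there is that the section symbol $\sn$, evaluated on terms that mention $\rv$, is exactly what can produce non-balls such as $\set{1 + \sn(t) : \vrv(t) > 0}$, and a volumetric partition localises finely enough that on each small open ball every relevant $\sn$-term is constant, whereupon the defining formula collapses to an honest $\lan{RV}$-formula. Consequently the only delicate point left here is the bookkeeping of parameters --- verifying that $\sn(\lbar t)$ and the atomic diagram of $S$ present themselves as legitimate $\lan{RV}$-parameters so that Theorem~\ref{c:min:acvf} genuinely applies --- and no further geometric input is needed.
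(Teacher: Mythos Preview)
Your proof is correct and follows essentially the same route as the paper's: apply Lemma~\ref{complex:1} to obtain $p$ and $\pi$, observe that each $\go(a,p(a))$ lies in a single fibre $\pi^{-1}(\lbar t)$, and then use $C$-minimality of $\ACVF$ (Theorem~\ref{c:min:acvf}) on the $\lan{RV}$-definable set $\pi^{-1}(\lbar t)\cap A$. The only differences are cosmetic --- you spell out the appeal to quantifier elimination (Theorem~\ref{qe:acvfii}) that Lemma~\ref{complex:1} tacitly requires, and you add some explanatory commentary --- but the argument is the same.
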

\begin{proof}
Let $A \sub \VF$ be definable and $p$, $\pi$ two functions as given by Lemma~\ref{complex:1} for $A$. Every $\pi^{-1}(\lbar t) \cap A$ is parametrically $\lan{RV}$-definable and hence, by Theorem~\ref{c:min:acvf}, is a boolean combination of balls. Since each $\go(a, p(a))$ is contained in some $\pi^{-1}(\lbar t)$, clearly $\go(a, p(a)) \cap A$ is also a boolean combination of balls.
\end{proof}

\end{document}